\title{Multiple Packing:\\Lower and Upper Bounds}
\author{
\IEEEauthorblockN{
Yihan Zhang\IEEEauthorrefmark{1}
Shashank Vatedka\IEEEauthorrefmark{2}
}\\
\IEEEauthorblockA{
\IEEEauthorrefmark{1}Institute of Science and Technology Austria \\
\IEEEauthorrefmark{2}Department of Electrical Engineering, Indian Institute of Technology Hyderabad
}
}
\begin{document}
\maketitle

\begin{abstract}
We study the problem of high-dimensional multiple packing in Euclidean space. Multiple packing is a natural generalization of sphere packing and is defined as follows. Let $ N>0 $ and $ L\in\mathbb{Z}_{\ge2} $. A multiple packing is a set $\mathcal{C}$ of points in $ \mathbb{R}^n $ such that any point in $ \mathbb{R}^n $ lies in the intersection of at most $ L-1 $ balls of radius $ \sqrt{nN} $ around points in $ \mathcal{C} $. We study the multiple packing problem for both bounded point sets whose points have norm at most $\sqrt{nP}$ for some constant $P>0$ and unbounded point sets whose points are allowed to be anywhere in $ \mathbb{R}^n $. Given a well-known connection with coding theory, multiple packings can be viewed as the Euclidean analog of list-decodable codes, which are well-studied for finite fields. In this paper, we derive various bounds on the largest possible density of a multiple packing in both bounded and unbounded settings. A related notion called average-radius multiple packing is also studied. Some of our lower bounds exactly pin down the asymptotics of certain ensembles of average-radius list-decodable codes, e.g., (expurgated) Gaussian codes and (expurgated) spherical codes. In particular, our lower bound obtained from spherical codes is the best known lower bound on the optimal multiple packing density and is the first lower bound that approaches the known large $L$ limit under the average-radius notion of multiple packing. To derive these results, we apply tools from high-dimensional geometry and large deviation theory. 
\end{abstract}


\section{Introduction}
\label{sec:intro}
We {study} the problem of \emph{multiple packing} {in Euclidean space}, a natural generalization of the sphere packing problem~\cite{conway-sloane-book}. 
Let $ P,N>0 $ and $ L\in\bZ_{\ge2} $. 
We say that a point set $\cC$ in $ \cB^n(\sqrt{nP}) $\footnote{Here we use $ \cB^n(r) $ to denote an $n$-dimensional Euclidean ball of radius $r$ centered at the origin.} forms a \emph{$(P,N,L-1)$-multiple packing}\footnote{We choose to stick with $L-1$ rather than $L$ for notational convenience. 
This is because in the proof, we need to examine the violation of $(L-1)$-packing, i.e., the existence of an $L$-sized subset that lies in a ball of radius $\sqrt{nN}$. } if any point in $ \bR^n $ lies in the intersection of at most $L-1$ balls of radius $ \sqrt{nN} $ around points in $\cC$. 
Equivalently, the radius of the smallest ball containing any size-$L$ subset of $\cC$ is larger than $ \sqrt{nN} $. 
This radius is known as the \emph{Chebyshev radius} of the $L$-sized subset. 
If $ L = 2 $, then $\cC$ forms a \emph{sphere packing}, i.e., a point set such that balls of radius $ \sqrt{nN} $ around points in $\cC$ are disjoint, or equivalently, the pairwise distance of points in $\cC$ is larger than $ 2\sqrt{nN} $. 
The density of $\cC$ is measured by its \emph{rate} defined as  
\begin{align}
R(\cC) \coloneqq \frac{1}{n}\ln|\cC|. 
\label{eqn:intro-rate}
\end{align}
Denote by $ C_{L-1}(P,N) $ the largest rate of a $(P,N,L-1)$-multiple packing as $ n\to\infty $. 
Note that $ C_{L-1}(P,N) $ depends on $P$ and $N$ only through their ratio $ N/P $ which we call the \emph{noise-to-signal ratio}.
The goal of this paper is to advance the understanding of $ C_{L-1}(P,N) $.

The problem of multiple packing is closely related to the \emph{list-decoding} problem \cite{elias-1957-listdec,wozencraft-1958-listdec} in coding theory. 
Indeed, a multiple packing can be seen exactly as the \emph{Euclidean} analog of a list-decodable code.
We will interchangeably use the terms ``packing'' and ``code'' to refer to the point set of interest. 
To see the connection, note that if any point in a multiple packing is transmitted through an adversarial channel that can inflict an arbitrary additive noise of length at most $ \sqrt{nN} $, then given the distorted transmission, one can decode to a list of the nearest $L-1$ points which is guaranteed to contain the transmitted one. 
The quantity $ C_{L-1}(P,N) $ can therefore be interpreted as the capacity of this channel. 
Moreover, list-decodable codes can be turned into unique-decodable codes with the aid of side information such as common randomness shared between the transmitter and receiver \cite{langberg-focs2004,sarwate-thesis,bhattacharya2019shared}. 
List-decoding also serves as a proof technique towards unique-decoding in various communication scenarios; see, e.g., \cite{zhang-quadratic-arxiv,zhang-2020-twoway}.

Let us start with the $L=2$ case. 
The best known lower bound is due to Blachman in 1962 \cite{blachman-1962} using a simple volume packing argument. 
The best known upper bound is due to Kabatiansky and Levenshtein in 1978 \cite{kabatiansky-1978} using the seminal Delsarte's linear programming framework \cite{delsarte-1973} from coding theory. 
These bounds meet nowhere except at two points $ N/P = 0 $ (where $ C_{L-1}(P,N) = \infty $) and $ N/P = 1/2 $ (where $ C_{L-1}(P,N) = 0 $).

For $ L>2 $, Blinovsky \cite{blinovsky-1999-list-dec-real} claimed a lower bound (\Cref{eqn:compare-lb-ee}) on $ C_{L-1}(P,N) $, but there were some gaps in the proof that we could not resolve.
Part of the original motivation of this work was to obtain intuition and also derive a more rigorous proof of the same result. 
Our approach is completely different from Blinovsky's and we believe that it is conceptually much simpler. 
In fact, our bound holds under a stronger notion known as \emph{average-radius} multiple packing, to be discussed momentarily. 
In the same paper, Blinovsky \cite{blinovsky-1999-list-dec-real} also rederived an upper bound using the ideas of the Plotkin bound \cite{plotkin-1960} and the Elias--Bassalygo bound \cite{bassalygo-pit1965} in coding theory. 
This bound was originally shown by Blachman and Few \cite{blachman-few-1963-multiple-packing} using a more involved approach. 
Blinovsky and Litsyn \cite{blinovsky-litsyn-2011} later improved this bound in the low-rate regime by a recursive application of a bound on the distance distribution by Ben-Haim and Litsyn \cite{benhaim-litsyn-2006-reliability-gaussian}. 
The latter bound in turn relies on the Kabatiansky--Levenshtein linear programming bound \cite{kabatiansky-1978}. 
Blinovsky and Litsyn \cite{blinovsky-litsyn-2011} numerically verified that their bounds improve previous ones when the rate is sufficiently low, but no explicit expression was given.

The above notion of $ (P,N,L-1) $-multiple packing also makes sense if we remove the restriction that all points lie in $ \cB^n(\sqrt{nP}) $ and allow the packing to contain points anywhere in $ \bR^n $. 
This leads to the notion of $ (N,L-1) $-multiple packing. 
The density of an unbounded packing is measured by the (normalized) number of points per volume 
\begin{align}
R(\cC) \coloneqq \limsup_{K\to\infty}\frac{1}{n}\ln\frac{\card{\cC\cap\cB^n(K)}}{\card{\cB^n(K)}}. 
\label{eqn:intro-rate-unbdd}
\end{align}
With slight abuse of terminology, we call $ R(\cC) $ the \emph{rate} of the unbounded packing $\cC$, a.k.a.\ the \emph{normalized logarithmic density (NLD)}. 
The largest density of unbounded multiple packings as $ n\to\infty $ is denoted by $ C_{L-1}(N) $.

For $L=2$, the unbounded sphere packing problem 
has a long history since at least the Kepler conjecture \cite{kepler-1611} in 1611. 
The best known lower bound is due to Minkowski \cite{minkowski-sphere-pack} using a straightforward volume packing argument. 
The best known upper bound is obtained by reducing it to the bounded case for which we have the Kabatiansky--Levenshtein linear programming-type bound \cite{kabatiansky-1978}. 
For $ L>2 $, Blinovsky \cite{blinovsky-2005-random-packing} gave a lower bound by analyzing an (expurgated) Poisson Point Process (PPP). 
Also see \cite{zhang-split-ppp} for a discussion and an alternative proof. 
The paper \cite{blinovsky-2005-random-packing} also presented an Elias--Bassalygo-type bound without a proof. 
We present a proof for it in \Cref{sec:eb} for completeness. 

For the multiple packing problem with $ L>2 $, many existing lower bounds (for both bounded and unbounded settings) are obtained under a stronger notion known as the \emph{average-radius} multiple packing (see \Cref{def:avg-rad-multi-pack-bounded,def:avg-rad-multi-pack-unbounded} for the exact definitions in the bounded and unbounded cases, respectively). 
A set $ \cC $ of $ \bR^n $-valued points is called an average-radius multiple packing if for any $(L-1)$-subset of $\cC$, the maximum distance from any point in the subset to the centroid of the subset is more than $ \sqrt{nN} $. 
Here the centroid of a subset is defined as the average of the points in the subset. 
Denote by $ \ol C_{L-1}(P,N) $ and $ \ol C_{L-1}(N) $ the largest density of average-radius multiple packings in the bounded and unbounded cases, respectively.
In this paper, we treat them as different notions from $ C_{L-1}(P,N) $ and $ C_{L-1}(N) $, and we study both notions. 
For any finite $ L\in\bZ_{\ge2} $, it is unknown whether the largest multiple packing density under the regular notion is the same as that under the average-radius variant. 

For $ L\to\infty $, Zhang and Vatedka \cite{zhang-vatedka-2019-listdecreal} determined the limiting value of $ C_{L-1}(N) $. 
It follows from results in this paper that $ \ol C_{L-1}(N) $ converges to the same value as $ L\to\infty $. 
The limit of $ C_{L-1}(P,N) $ as $ L\to\infty $ is a folklore in the literature and a proof can be found in \cite{zhang-quadratic-arxiv}. 
However, as far as we know, it is not known before this work whether $ \ol C_{L-1}(P,N) $ converges to the same value as $ C_{L-1}(P,N) $. 
It follows from results in this paper (see lower bound in \Cref{thm:lb-spherical-improved} and upper bound in \Cref{thm:eb}) that $ \ol C_{L-1}(P,N) $ does converge to the same value as $ C_{L-1}(P,N) $. 

Very little is known about structured packings. 
Grigorescu and Peikert \cite{grigorescu-peikert-2012-list-dec-barnes-wall} initiated the study of list-decodability of lattices. 
See also the recent work \cite{mook-peikert-2020-lattice} by Mook and Peikert. 
Zhang and Vatedka \cite{zhang-vatedka-2019-listdecreal} had results on list-decodability of random lattices. 

\subsection*{Relation to conference version}
This work was presented in part at the 2022 IEEE International Symposium on Information Theory~\cite{zhang-misc-isit}. 
\cite{zhang-misc-isit} only contains results on Gaussian and spherical codes. 
Besides the these results, the current paper also presents alternative analyses of spherical and ball codes, extends and simplifies the bounds in \cite{blachman-few-1963-multiple-packing}, derives upper bounds on the multiple packing density, and makes various observations regarding bounded/unbounded packing and average radius. 

\section{Related works}
\label{sec:related}
For $L=2$, the problem of (unbounded) sphere packing has a long history and has been extensively studied, especially for small dimensions. 
The largest packing density is open for almost every dimension, except for $ n = 1 $ (trivial), $2$ (\cite{thue1911-2dspherepacking,toth-1940-2dspherepacking}), $ 3 $ (the Kepler conjecture, \cite{hales1998kepler,hales2017formal}), $ 8 $ (\cite{viazovska-2017-8dspherepacking}) and $24$ (\cite{cohn-2017-24spherepacking}). 
For $ n\to\infty $, the best lower and upper bounds remain the trivial sphere packing bound and Kabatiansky--Levenshtein's linear programming bound \cite{kabatiansky-1978}. 
This paper is only concerned with (multiple) packings in high dimensions and we measure the density in the normalized way as mentioned in \Cref{sec:intro}. 

There is a parallel line of research in combinatorial coding theory. 
Specifically, a uniquely-decodable code (resp.\ list-decodable code) is nothing but a sphere packing (resp.\ multiple packing) which has been extensively studied for $ \bF_q^n $ equipped with the Hamming metric. 
Empirically, it seems that the problem is harder for smaller field sizes $q$.

We first list the best known results for sphere packing (i.e., $L=2$) in Hamming spaces. 
For $q=2$, the best lower and upper bounds are the Gilbert--Varshamov bound \cite{gilbert1952,varshamov1957} proved using a trivial volume packing argument and the second MRRW bound \cite{mrrw2} proved using the seminal Delsarte's linear programming framework \cite{delsarte-1973}, respectively. 
Surprisingly, the Gilbert--Varshamov bound can be improved using algebraic geometry codes \cite{goppa1977,tvz} for $ q\ge49 $. 
Note that such a phenomenon is absent in $ \bR^n $; as far as we know, no algebraic constructions of Euclidean sphere packings are known to beat the greedy/random constructions. 
For $ q\ge n $, the largest packing density is known to exactly equal the Singleton bound \cite{komamiya1953singleton,joshi1958singleton,singleton1964} which is met by, for instance, the Reed--Solomon code \cite{reed-solomon}. 

Less is known for multiple packing in Hamming spaces. 
We first discuss the binary case (i.e., $q=2$). 
For every $ L\in\bZ_{\ge2} $, the best lower bound appears to be Blinovsky's bound \cite[Theorem 2, Chapter 2]{blinovsky2012book} proved under the stronger notion of average-radius list-decoding. 
The best upper bound for $L=3$ is due to Ashikhmin, Barg and Litsyn \cite{abl-2000-list-size-2} who combined the MRRW bound \cite{mrrw2} and Litsyn's bound \cite{litsyn-1999} on distance distribution. 
For any $ L\ge4 $, the best upper bound is essentially due to Blinovsky again \cite{blinovsky-1986-ls-lb-binary}, \cite[Theorem 3, Chapter 2]{blinovsky2012book}, though there are some partial improvements. 
In particular, the idea in \cite{abl-2000-list-size-2} was recently generalized to larger $L$ by Polyanskiy \cite{polyanskiy-2016-list-dec} who improved Blinovsky's upper bound for \emph{even} $L$ (i.e., odd $L-1$) and sufficiently large $R$. 
Similar to \cite{abl-2000-list-size-2}, the proof also makes use of a bound on distance distribution due to Kalai and Linial \cite{kalai-linial-1995-distance-distribution} which in turn relies on Delsarte's linear programming bound. 
For larger $q$, Blinovsky's lower and upper bounds \cite{blinovsky-2005-ls-lb-qary,blinovsky-2008-ls-lb-qary-supplementary}, \cite[Chapter III, Lecture 9, \S 1 and 2]{ahlswede-blinovsky-2008-book} remain the best known.

As $L\to\infty$, the limiting value of the largest multiple packing density is a folklore in the literature known as the ``list-decoding capacity'' theorem\footnote{It is an abuse of terminology to use ``list-decoding capacity'' here to refer to the large $L$ limit of the $(L-1)$-list-decoding capacity.}.
Moreover, the limiting value remains the same under the average-radius notion. 

The problem of list-decoding was also studied for settings beyond the Hamming errors, e.g., list-decoding against erasures \cite{guruswami-it2003,ben-aroya-doron-ta-shma-2018-explicit-erasure-ld}, insertions/deletions \cite{guruswami-2020-listdec-insdel}, asymmetric errors \cite{polyanskii-zhang-2021-z}, etc. 
Zhang et al.\ considered list-decoding over general adversarial channels \cite{zhang-2019-list-dec-general}. 
List-decoding against other types of adversaries with \emph{limited} knowledge such as oblivious or myopic adversaries were also considered in the literature \cite{hughes-1997-list-avc,sarwate-gastpar-2012-listdec,zhang-2020-obli-list-dec,hosseinigoki-kosut-2018-oblivious-gaussian-avc-ld,zhang-quadratic-arxiv}. 
The current paper can be viewed as a collection of results for list-decodable codes for adversarial channels over $ \bR $ with $ \ell_2 $ constraints.

\section{Our results}
\label{sec:results}

We derive upper and lower bounds on the largest multiple packing density in both bounded and unbounded settings.
We compare various bounds that appear in this paper. 
Let $ C_{L-1}(P,N) $ and $ \ol C_{L-1}(P,N) $ denote the largest density of multiple packings under the standard and the average-radius notions, respectively.
Let $ C_{L-1}(N) $ and $ \ol C_{L-1}(N) $ denote the largest density of unbounded multiple packings under the standard and the average-radius notions, respectively.

\subsection{{Bounded packings}}
\label{sec:results-bdd}
We first focus on the $(P,N,L-1)$-multiple packing problem. 
Since average-radius multiple packing is stronger than the standard multiple packing, $ C_{L-1}(P,N)\ge \ol C_{L-1}(P,N) $ for any $ L\in\bZ_{\ge2} $ and $ P,N>0 $. 
Therefore any lower bound on $ \ol C_{L-1}(P,N) $ automatically holds for $ C_{L-1}(P,N) $ and any upper bound on $ C_{L-1}(P,N) $ automatically holds for $ \ol C_{L-1}(P,N) $. 
Similar statements also hold for $ C_{L-1}(N) $ vs.\ $ \ol C_{L-1}(N) $.

In \Cref{thm:lb-gaussian}, we obtain the \emph{exact} asymptotics of (expurgated) Gaussian codes under $(P,N,L-1)$-average-radius list-decoding which serves as a lower bound on $ \ol C_{L-1}(P,N) $:
\begin{align}
\ol C_{L-1}(P,N) &\ge \frac{1}{2}\sqrbrkt{\ln\frac{(L-1)P}{LN} + \frac{LN}{(L-1)P} + 1}. \label{eqn:compare-lb-gaussian} 
\end{align}
In \Cref{thm:lb-spherical}, we obtain a lower bound on the rate that can be achieved by (expurgated) spherical codes under $ (P,N,L-1) $-average-radius list-decoding: 
\begin{align}
\ol C_{L-1}(P,N) &\ge \frac{1}{2}\sqrbrkt{1 - \frac{LN}{(L-1)P} + \frac{1}{L-1}\ln\frac{P}{L(P-N)}}. \label{eqn:compare-lb-spherical} 
\end{align}
Using refined analysis, we then show in \Cref{thm:lb-spherical-improved} that (expurgated) spherical codes in fact achieve a much higher rate under average-radius list-decoding:
\begin{align}
\ol C_{L-1}(P,N) &\ge \frac{1}{2}\sqrbrkt{\ln\frac{(L-1)P}{LN} + \frac{1}{L-1}\ln\frac{P}{L(P-N)}}. \label{eqn:compare-lb-spherical-improved}
\end{align}
The following bound was initially proved by Blachman and Few \cite{blachman-few-1963-multiple-packing} for $ C_{L-1}(P,N) $.
In \Cref{thm:lb-blachman-few}, we simplify their proof and strengthen it so that it holds for $ \ol C_{L-1}(P,N) $: 
\begin{align}
\ol C_{L-1}(P,N) &\ge \frac{1}{2}\ln\frac{(L-1)^2P^2}{LN(2(L-1)P - LN)}. \label{eqn:compare-lb-blachman-few} 
\end{align}
The idea due to Blachman and Few is to reduce the multiple packing problem to sphere packing.
The following is the best known lower bound on the $(P,N,L-1)$-list-decoding capacity
\begin{align}
C_{L-1}(P,N) &\ge \frac{1}{2}\sqrbrkt{\ln\frac{(L-1)P}{LN} + \frac{1}{L-1}\ln\frac{P}{L(P-N)}}. \label{eqn:compare-lb-ee}
\end{align}
The above bound was claimed by Blinovsky \cite{blinovsky-1999-list-dec-real} (also see~\cite{zhang-split-ee}).
Curiously, we observe that \Cref{eqn:compare-lb-ee,eqn:compare-lb-spherical-improved} coincide, though \Cref{eqn:compare-lb-spherical-improved} is proved under a stronger notion \emph{average-radius} list-decoding and the underlying proof technique is rather distinct from that for \Cref{eqn:compare-lb-ee}. 
In the same paper, Blinovsky also claimed the following upper bound on $ C_{L-1}(P,N) $:
\begin{align}
C_{L-1}(P,N) &\le \frac{1}{2}\ln\frac{(L-1)P}{LN}. \label{eqn:compare-eb} 
\end{align}
We make Blinovsky's arguments for the above bound fully rigorous and complete and present a cleaner proof in \Cref{thm:eb}. 
Finally, it is a folklore (whose proof can be found in \cite{zhang-quadratic-arxiv}) that as $ L\to\infty $, $ C_{L-1}(P,N) $ converges to the following expression:
\begin{align}
C_{\mathrm{LD}}(P,N) &= \frac{1}{2}\ln\frac{P}{N}. \label{eqn:compare-ld-cap} 
\end{align}
In fact, our lower bound in \Cref{eqn:compare-lb-spherical-improved} shows that $ \ol{C}_{L-1}(P,N) $ (which is always at most $ C_{L-1}(P,N) $) converges to a limit $ \ol{C}_{\mathrm{LD}}(P,N) $ with the same value $ \frac{1}{2}\ln\frac{P}{N} $ as $L\to\infty$.

Our upper and lower bounds are nowhere tight except for $ N = \frac{L-1}{L}P $ at which point both the upper and lower bounds vanish. 
We refer to this threshold $ \frac{N}{P} = \frac{L-1}{L} $ as the \emph{Plotkin point}. 
As $ L\to\infty $, the Plotkin point increases from $ 1/2 $ to $ 1 $.

All the above bounds for $ (P,N,L-1) $-multiple packing are plotted in \Cref{fig:ld-bdd} with $ L = 5 $. 
The horizontal axis is the noise-to-signal ratio $ N/P $ and the vertical axis is the value of various bounds. 
The largest lower bound turns out to be \Cref{eqn:compare-lb-ee,eqn:compare-lb-spherical-improved} (for all $N,P\ge0$ and $ L\in\bZ_{\ge2} $). 
We also plot the best lower bound \Cref{eqn:compare-lb-spherical-improved} together with the Elias--Bassalygo-type upper bound \Cref{eqn:compare-eb} in \Cref{fig:EEEB} for $ L=3,4,5 $. 
They both converge from below to \Cref{eqn:compare-ld-cap} as $ L $ increases. 

\begin{figure}[htbp]
	\centering
	\begin{subfigure}[b]{\linewidth}
		\centering
		\includegraphics[width=0.95\textwidth]{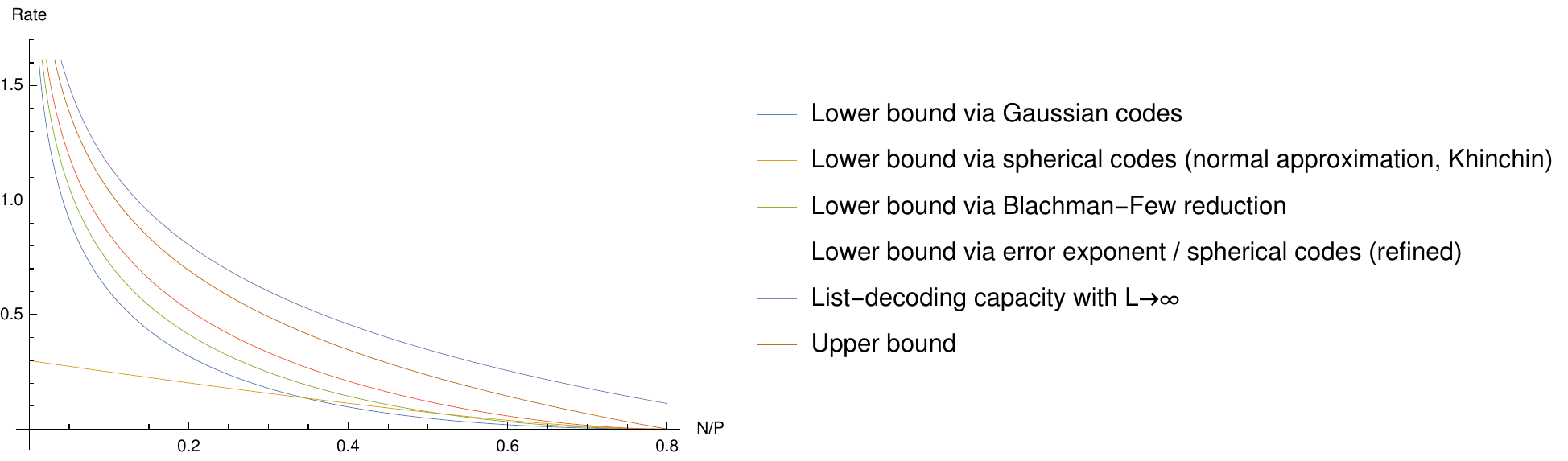}
		\caption{}
		\label{fig:compare}
	\end{subfigure}
	\\
	\begin{subfigure}[b]{\linewidth}
		\centering
		\includegraphics[width=0.95\textwidth]{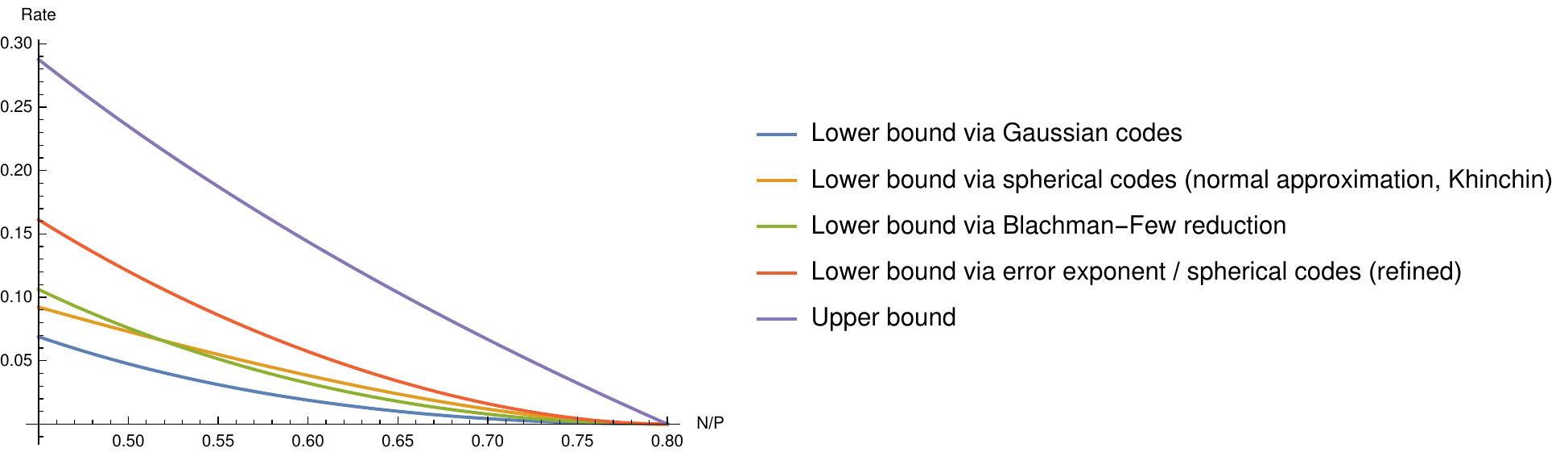}
		\caption{}
		\label{fig:compare-low}
	\end{subfigure}
	\\
	\begin{subfigure}[b]{\linewidth}
		\centering
		\includegraphics[width=0.95\textwidth]{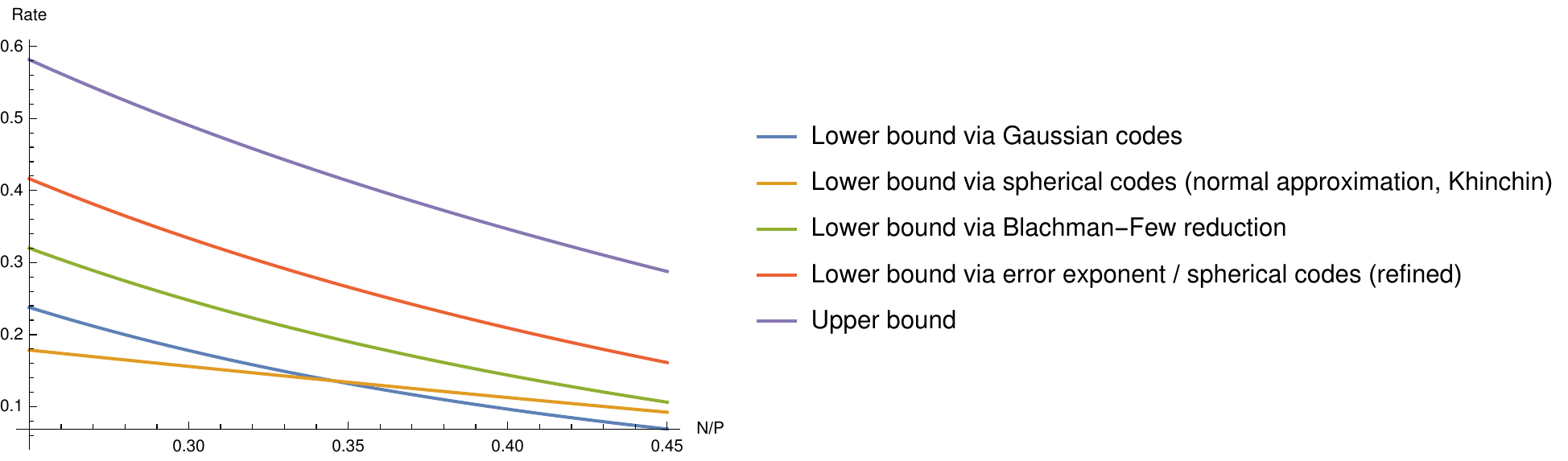}
		\caption{}
		\label{fig:compare-mid}
	\end{subfigure}
	\caption{Comparison of different bounds for the $(P,N,L-1)$-list-decoding problem. The horizontal axis is $ N/P $ and the vertical axis is the value of various bounds. Recall that the rate (\Cref{eqn:intro-rate}) of a bounded packing is defined as its normalized cardinality. We plot bounds for $ L = 5 $. The expressions for all bounds in \Cref{fig:compare} can be found in \Cref{eqn:compare-lb-gaussian,eqn:compare-lb-spherical,eqn:compare-lb-blachman-few,eqn:compare-lb-ee,eqn:compare-eb,eqn:compare-ld-cap,eqn:compare-lb-spherical-improved}. These bounds intersect with each other in a sophisticated manner. Therefore, we zoom in on the low rate regime in \Cref{fig:compare-low} and on the middle rate regime regime in \Cref{fig:compare-mid}. }
	\label{fig:ld-bdd}
\end{figure}

\begin{figure}[htbp]
	\centering
	\begin{subfigure}[b]{\linewidth}
		\centering
		\includegraphics[width=0.95\textwidth]{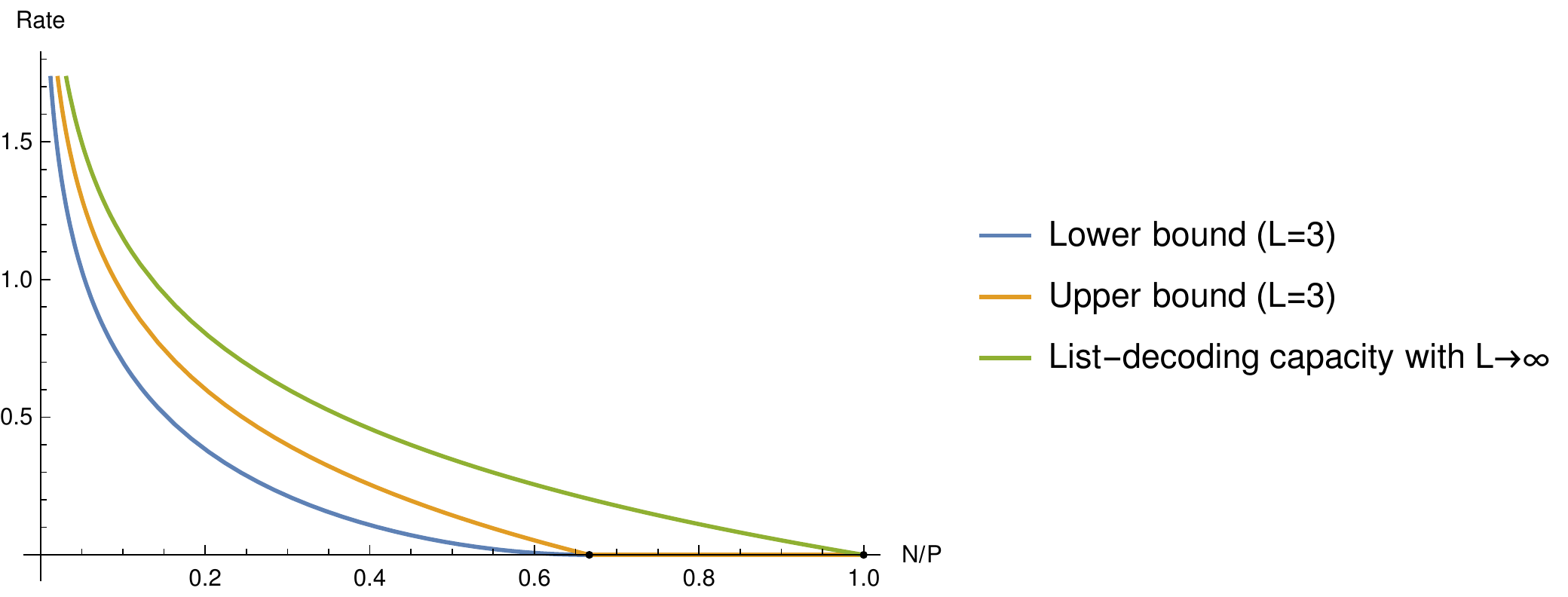}
		\caption{}
		\label{fig:EEEB3}
	\end{subfigure}
	\\
	\begin{subfigure}[b]{\linewidth}
		\centering
		\includegraphics[width=0.95\textwidth]{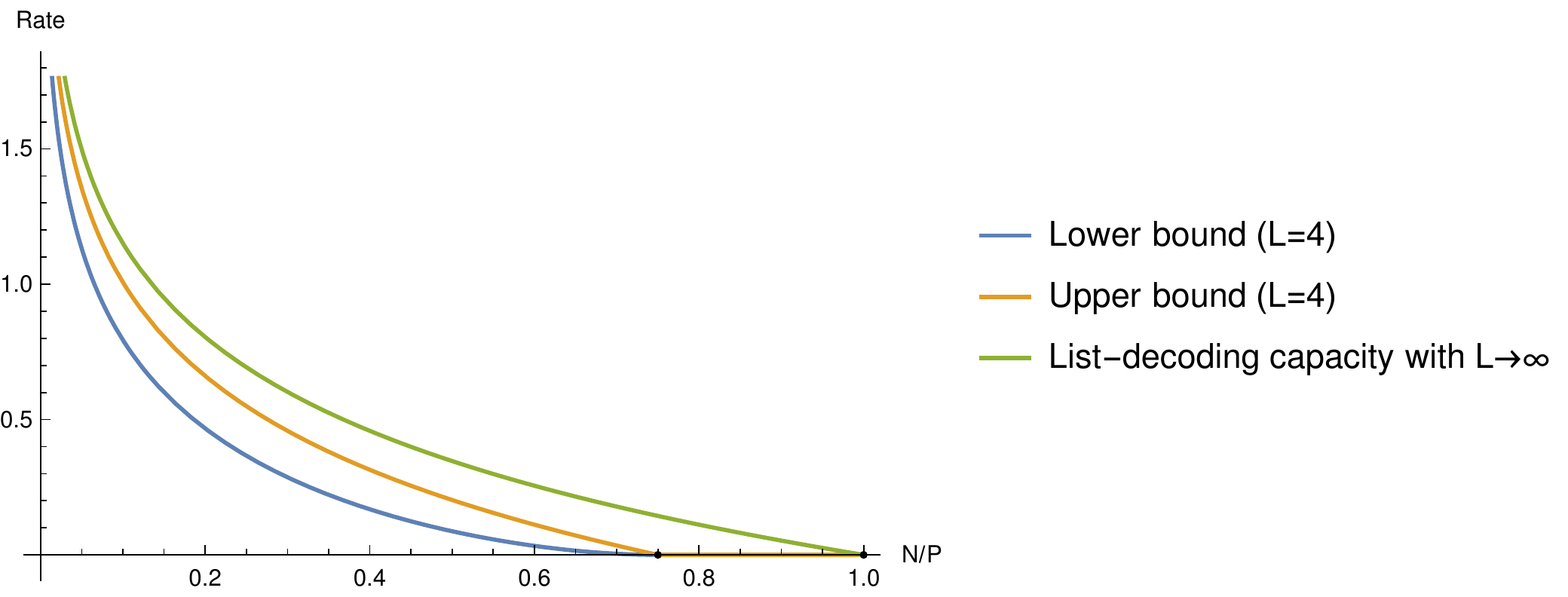}
		\caption{}
		\label{fig:EEEB4}
	\end{subfigure}
	\\
	\begin{subfigure}[b]{\linewidth}
		\centering
		\includegraphics[width=0.95\textwidth]{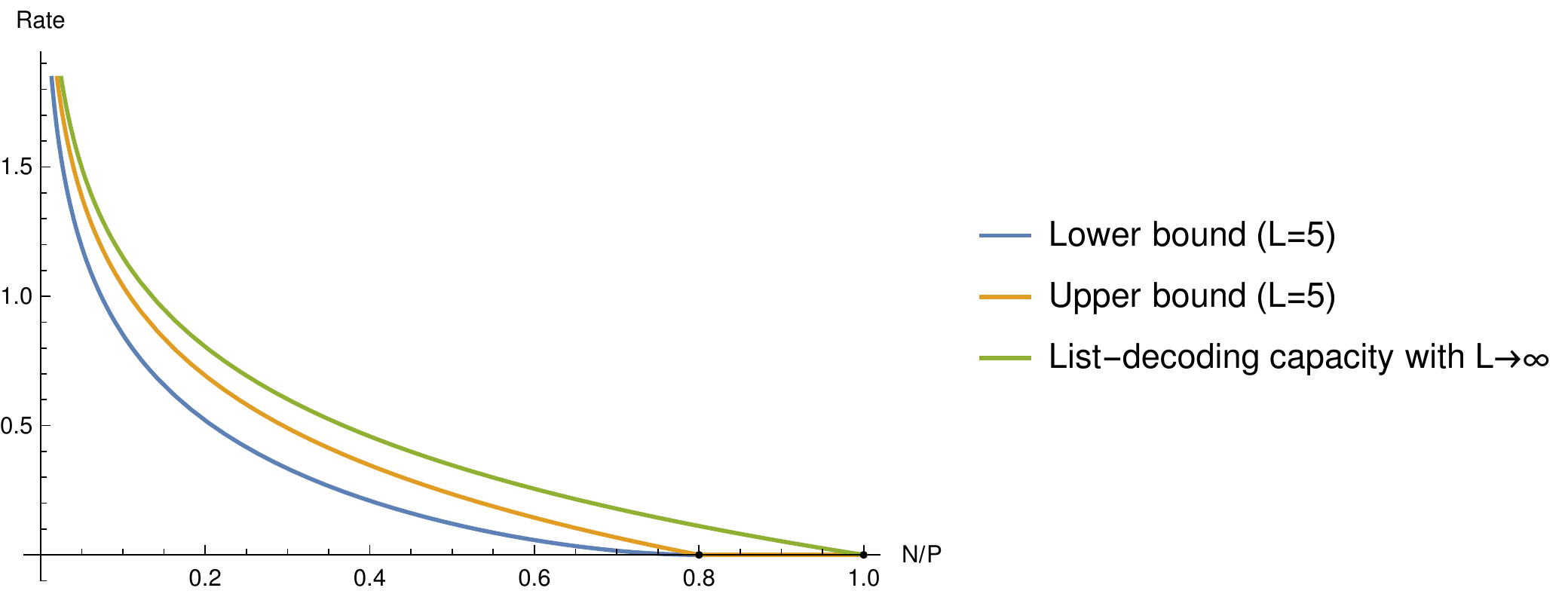}
		\caption{}
		\label{fig:EEEB5}
	\end{subfigure}
	\caption{Plots of the best known lower bound (\Cref{eqn:compare-lb-ee}) on $ C_{L-1}(P,N) $ and the Elias--Bassalygo-type upper bound (\Cref{eqn:compare-eb}) for $ L=3,4,5 $. As $L$ increases, they both converge from below to $ C_{\mathrm{LD}}(P,N) $ (\Cref{eqn:compare-ld-cap}). 
    }
	\label{fig:EEEB}
\end{figure}

\subsection{{Unbounded packings}}
\label{sec:results-unbdd}
We then juxtapose various bounds for the $(N,L-1)$-multiple packing problem. 
In \cite{zhang-split-ee}, the following lower bound (\Cref{eqn:compare-lb-ppp}) on $ C_{L-1}(N) $ is obtained via a connection with error exponents.
Furthermore, in \cite{zhang-split-ppp}, it is shown that the same bound can be achieved by a certain ensemble of infinite constellations under $(N,L-1)$-\emph{average-radius} list-decoding (which is stronger than $ (N,L-1) $-list-decoding): 
\begin{align}
C_{L-1}(N) \ge 
\ol C_{L-1}(N) &\ge \frac{1}{2}\ln\frac{L-1}{2\pi eNL} - \frac{\ln L}{2(L-1)}. \label{eqn:compare-lb-ppp}
\end{align}
In \cite{blinovsky-2005-random-packing}, Blinovsky considered PPPs and arrived at the same bound. 
Also see \cite{zhang-split-ppp} for a discussion of the relation between the two works.
The techniques in \cite{blachman-few-1963-multiple-packing} can be adapted to the unbounded setting and be strengthened to work for the stronger notion of \emph{average-radius} multiple packing. 
They yield the following lower bound on $ \ol C_{L-1}(N) $: 
\begin{align}
\ol C_{L-1}(N) &\ge \frac{1}{2}\ln\frac{L-1}{4\pi eNL}. \label{eqn:compare-lb-bf-unbdd} 
\end{align}
As for upper bound, the techniques in \Cref{thm:eb} can be adapted to the unbounded setting as well which yield the following upper bound on $ C_{L-1}(N) $: 
\begin{align}
C_{L-1}(N) &\le \frac{1}{2}\ln\frac{L-1}{2\pi eNL}. \label{eqn:compare-eb-unbdd}
\end{align}
Finally, it is known (see, e.g., \cite{zhang-vatedka-2019-listdecreal}) that as $ L\to\infty $, $ C_{L-1}(N) $ converges to the following expression: 
\begin{align}
C_{\mathrm{LD}}(N) &= \frac{1}{2}\ln\frac{1}{2\pi eN}. \label{eqn:compare-ld-cap-unbdd}
\end{align}
Note that, by the lower and upper bounds (\Cref{eqn:compare-lb-ppp,eqn:compare-eb-unbdd}) on $ \ol C_{L-1}(N) $ for finite $L$, the limiting value of $ \ol C_{L-1}(N) $ as $ L\to\infty $ is also the above expression. 

All the above bounds for $ (N,L-1) $-multiple packing are plotted in \Cref{fig:ld-unbdd} with $ L = 5 $. 
The horizontal axis is $N$ and the vertical axis is the value of various bounds. 
The largest lower bound turns out to be \Cref{eqn:compare-lb-ppp} (for all $N\ge0$ and $ L\in\bZ_{\ge2} $).
This bound together with the Elias--Bassalygo-type upper bound \Cref{eqn:compare-eb} are plotted in \Cref{fig:PPPEB} for $ L=3,4,5 $. They both converge from below to \Cref{eqn:compare-ld-cap-unbdd} as $ L $ increases.

\begin{figure}[htbp]
	\centering
	\includegraphics[width=0.95\textwidth]{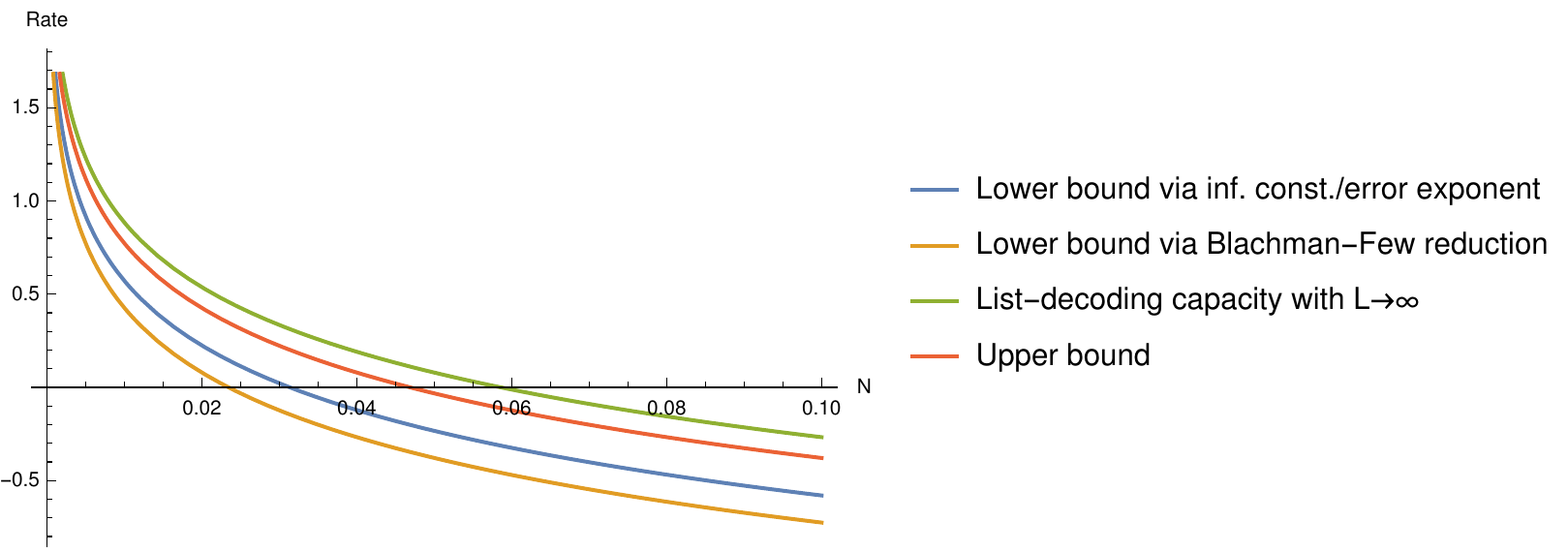}
	\caption{Comparison of different bounds for the $ (N,L-1) $-list-decoding problem. The horizontal axis is $N$ and the vertical axis is the value of bounds. We plot bounds for $ L = 5 $. Recall that the rate (\Cref{eqn:intro-rate-unbdd}) of an unbounded packing is defined as the (normalized) number of points per volume which can be negative. The expressions for all bounds can be found in \Cref{eqn:compare-lb-ppp,eqn:compare-lb-bf-unbdd,eqn:compare-eb-unbdd,eqn:compare-ld-cap-unbdd}. }
	\label{fig:ld-unbdd}
\end{figure}

\begin{figure}[htbp]
	\centering
	\includegraphics[width=0.95\textwidth]{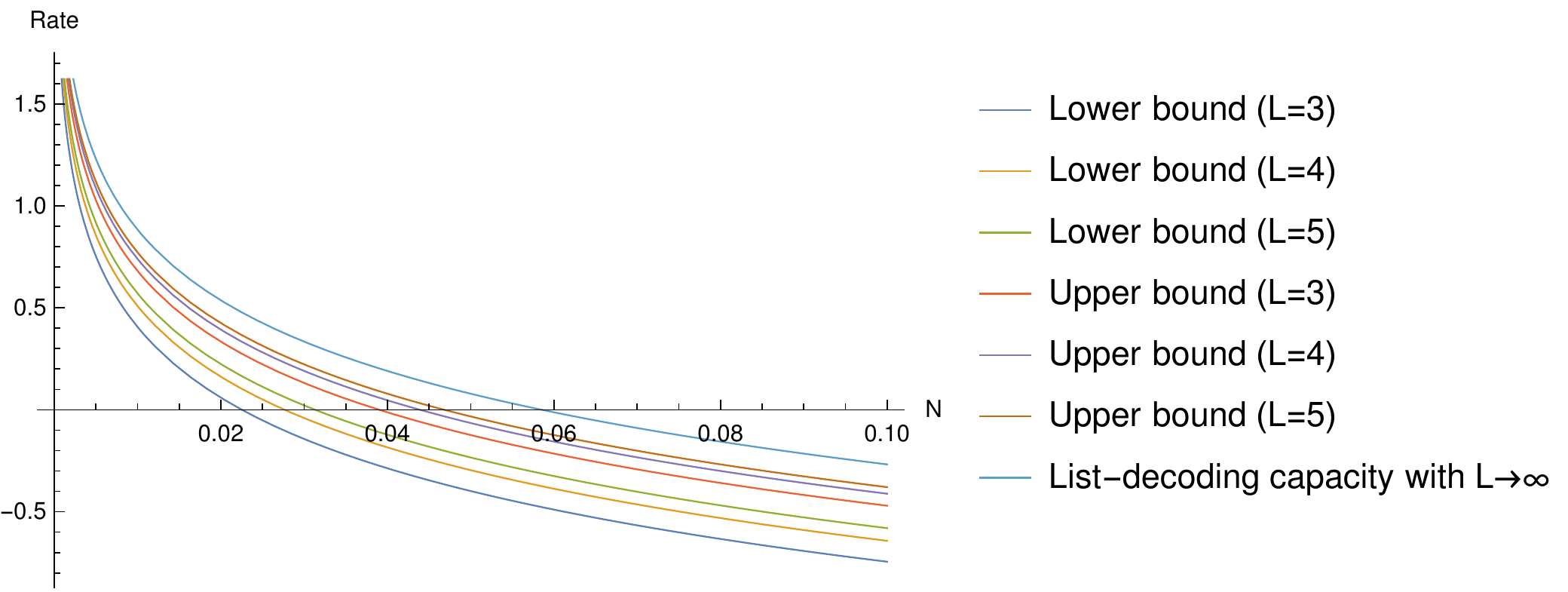}
	\caption{Plots of the best known lower bound (\Cref{eqn:compare-lb-ppp}) on $ C_{L-1}(N) $ and the Elias--Bassalygo-type upper bound for $ L=3,4,5 $. As $ L $ increases, they both converge from below to $ C_{\mathrm{LD}}(N) $ (\Cref{eqn:compare-ld-cap-unbdd}). The lower bound \Cref{eqn:compare-lb-ppp} can be obtained using the connection with error exponents. Moreover, it is actually the exact asymptotics of a certain ensemble of infinite constellations under the \emph{average-radius} notion of unbounded multiple packing. }
	\label{fig:PPPEB}
\end{figure}

\section{List-decoding capacity for large $L$}
\label{sec:listdec-cap-large}

All bounds in this paper hold for any \emph{fixed} $L$. 
In this section, we discuss the impact of our finite-$L$ bounds on the understanding of the limiting values of the largest multiple packing density as $ L\to\infty $. 
Some of these results were known previously and others follow from the bounds in the current paper.

Characterizing $ C_{L-1}(P,N) $, $ \ol C_{L-1}(P,N) $, $ C_{L-1}(N) $ or $ \ol C_{L-1}(N) $ is a difficult task that is out of reach given the current techniques. 
However, if the list-size $L$ is allowed to grow, we can actually characterize 
\begin{align}
C_{\mathrm{LD}}(P,N) &\coloneqq \lim_{L\to\infty} C_{L-1}(P,N), \quad
C_{\mathrm{LD}}(N) \coloneqq \lim_{L\to\infty} C_{L-1}(N), \notag \\
\ol C_{\mathrm{LD}}(N) &\coloneqq \lim_{L\to\infty} \ol C_{L-1}(N), \quad
\ol C_{\mathrm{LD}}(P,N) \coloneqq \lim_{L\to\infty} \ol C_{L-1}(P,N). \notag 
\end{align}
where the subscript $ \mathrm{LD} $ denotes List-Decoding. 

It is well-known that $ C_{\mathrm{LD}}(P,N) = \frac{1}{2}\ln\frac{P}{N} $. 
Specifically, the following theorem appears to be a folklore in the literature and a complete proof can be found in \cite{zhang-quadratic-arxiv}. 
\begin{theorem}[Folklore, \cite{zhang-quadratic-arxiv}]
\label{thm:listdec-cap-large-bdd}
Let $ 0<N\le P $. 
Then for any $ \eps>0 $, 
\begin{enumerate}
	\item There exist $ (P,N,L-1) $-multiple packings of rate $ \frac{1}{2}\ln\frac{P}{N} - \eps $ for some $ L = \cO\paren{\frac{1}{\eps}\ln\frac{1}{\eps}} $; 
	
	\item Any $ (P,N,L-1) $-multiple packing of rate $ \frac{1}{2}\ln\frac{P}{N} + \eps $ must satisfy $ L = e^{\Omega(n\eps)} $. 
\end{enumerate}
Therefore, $ C_{\mathrm{LD}}(P,N) = \frac{1}{2}\ln\frac{P}{N} $. 
\end{theorem}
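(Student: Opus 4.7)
The plan is to derive both halves of the theorem from finite-$L$ bounds already established elsewhere in the paper.

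\textbf{Achievability.} I would invoke the spherical-code lower bound of \Cref{thm:lb-spherical-improved} (i.e., \Cref{eqn:compare-lb-spherical-improved}):
\[
\ol C_{L-1}(P,N) \;\ge\; \frac{1}{2}\ln\frac{(L-1)P}{LN} + \frac{1}{2(L-1)}\ln\frac{P}{L(P-N)}.
\]
Since $C_{L-1}(P,N)\ge\ol C_{L-1}(P,N)$, this also lower-bounds $C_{L-1}(P,N)$. The gap from the target $\frac12\ln(P/N)$ equals $\frac12\ln\frac{L}{L-1}+\frac{1}{2(L-1)}\ln\frac{L(P-N)}{P}$; for large $L$ this is dominated by $\frac{\ln L}{2(L-1)}$, so it is of order $(\ln L)/L$ with an implicit constant depending only on $P/N$. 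Solving $(\ln L)/L\asymp\eps$ yields $L=\cO\paren{\frac{1}{\eps}\ln\frac{1}{\eps}}$, as required.

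\textbf{Converse.} I would invoke the Elias--Bassalygo-type upper bound of \Cref{thm:eb} (i.e., \Cref{eqn:compare-eb}). Its proof in \Cref{sec:eb} actually establishes a finite-$n$ statement of the form
\[
|\cC| \;\le\; \mathrm{poly}(n)\cdot L \cdot \paren{\frac{(L-1)P}{LN}}^{n/2}.
\]
Plugging $R=\frac1n\ln|\cC|=\frac12\ln(P/N)+\eps$ into this and taking logarithms yields
\[
n\eps \;\le\; \ln L - \frac{n}{2}\ln\frac{L}{L-1}+O(\log n) \;\le\; \ln L+O(\log n),
\]
where the last step uses $\ln\frac{L}{L-1}\ge 0$. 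Hence $\ln L\ge n\eps-O(\log n)=\Omega(n\eps)$ for all large $n$, i.e., $L\ge e^{\Omega(n\eps)}$.

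\textbf{Main obstacle.} The only subtle point is the prefactor in the non-asymptotic EB bound used for the converse: the asymptotic statement $R\le\frac12\ln\frac{(L-1)P}{LN}$ on its own only tells us $C_{L-1}(P,N)<\frac12\ln(P/N)$ for each fixed $L$, without controlling how fast $L$ must grow with $n$. What saves the argument is that the pigeonhole-plus-Plotkin structure underlying the proof of \Cref{thm:eb} produces a multiplicative prefactor of $L$ (from the $L$-subset threshold in the Plotkin step) times at most a polynomial in $n$ (from the covering count), rather than an exponential-in-$n$ overhead. Confirming this bookkeeping---without redoing the proof of \Cref{thm:eb}---is the only step beyond routine substitution.
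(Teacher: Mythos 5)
Your achievability argument is fine and is essentially the one the paper itself records in \Cref{rk:lb-asymp}: plug $L=\Theta\paren{\frac{1}{\eps}\ln\frac{1}{\eps}}$ into \Cref{thm:lb-spherical-improved} and check that the deficit $\frac{1}{2}\ln\frac{L}{L-1}+\frac{1}{2(L-1)}\ln\frac{L(P-N)}{P}$, whose dominant term is $\frac{\ln L}{2(L-1)}$, drops below $\eps$. (Note the paper does not prove \Cref{thm:listdec-cap-large-bdd} at all; it defers the complete proof to \cite{zhang-quadratic-arxiv}.)

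The converse is where you have a genuine gap. The finite-$n$ bound you attribute to the proof of \Cref{thm:eb}, namely $|\cC|\le\mathrm{poly}(n)\cdot L\cdot\paren{\frac{(L-1)P}{LN}}^{n/2}$, is not what that proof delivers. The per-cap bound comes from \Cref{thm:plotkin}, whose prefactor is $F(\rho^{-1}+1)$ with $F=f^{-1}$ a Ramsey-type function (\Cref{thm:bound-ramsey-number}, \Cref{eqn:ramsey-f}); moreover, when $L$ is allowed to grow with $n$ (which is exactly the regime of part 2, where you are trying to force $L\ge e^{\Omega(n\eps)}$), the argument in \Cref{thm:plotkin} is vacuous unless the Ramsey-extracted subcode has size $M\ge L$, so the honest conclusion is $|\cC\cap\text{cap}|\le F(\max(L,\rho^{-1}+1))$. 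Since $\ln F(L)$ grows like $\exp(\mathrm{poly}(L))$ rather than like $\ln L$, the resulting inequality only forces $L$ to exceed an iterated logarithm of $e^{n\eps}$, nowhere near $e^{\Omega(n\eps)}$. You also silently drop the slack parameter $\rho$: to make the covering exponent $\ln\frac{1}{\sin\alpha}$ approach $\frac{1}{2}\ln\frac{(L-1)P}{LN}$ you must take $\rho\to0$, which blows up $F(\rho^{-1}+1)$, and the residual $\frac{n}{2}\ln(1+\rho)$ term cannot be absorbed into $O(\log n)$. The correct proof of part 2 does not go through the Elias--Bassalygo machinery at all; it is a direct double-counting/volume argument (pick a suitably distributed random center $\vy$ and show that the expected number of codewords in $\cB^n(\vy,\sqrt{nN})$ is at least $e^{n\eps-o(n)}$ once $R=\frac{1}{2}\ln\frac{P}{N}+\eps$), which is what the cited reference \cite{zhang-quadratic-arxiv} does. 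Indeed, the paper itself only ever uses \Cref{thm:eb} for the weaker large-$L$ statement in \Cref{rk:ub-asymp} (list size $\Omega(1/\eps)$ to get within $\eps$ \emph{below} capacity), not for the exponential-list-size statement \emph{above} capacity.
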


We claim that $ \ol{C}_{\mathrm{LD}}(P,N) $ is also equal to $ \frac{1}{2}\ln\frac{P}{N} $. 
For an upper bound, recall that average-radius list-decodability implies (regular) list-decodability. 
Therefore, any upper bound on $ C_{L-1}(P,N) $ is also an upper bound on $ \ol C_{L-1}(P,N) $. 
We already saw an upper bound on $ C_{L-1}(P,N) $ in \Cref{thm:eb} that approaches $ \frac{1}{2}\ln\frac{P}{N} $ as $L\to\infty$. 
On the other hand, we have seen constructions of $ (P,N,L-1) $-average-radius list-decodable codes that attain rates approaching $ \frac{1}{2}\ln\frac{1}{2\pi eN} $ as $ L\to\infty $. 
Indeed, according to \Cref{thm:lb-spherical-improved}, for sufficiently large $L$, an expurgated spherical code achieves $ \frac{1}{2}\ln\frac{P}{N} $ with high probability. 
This leads to the following characterization

\begin{theorem}
\label{thm:avgrad-listdec-cap-large-bdd}
For any $P,N>0$, $ \ol{C}_{\mathrm{LD}}(P,N) = \frac{1}{2}\ln\frac{P}{N} $. 
\end{theorem}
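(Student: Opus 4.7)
The plan is to sandwich $\ol C_{L-1}(P,N)$ between matching upper and lower bounds that both tend to $\frac{1}{2}\ln\frac{P}{N}$ as $L\to\infty$; both ingredients are already established earlier in the paper, so the present statement is essentially a corollary of \Cref{thm:eb} and \Cref{thm:lb-spherical-improved}.

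For the upper direction, I first observe that every average-radius $(P,N,L-1)$-multiple packing is in particular a (standard) $(P,N,L-1)$-multiple packing, hence $\ol C_{L-1}(P,N)\le C_{L-1}(P,N)$. Combining this with the Elias--Bassalygo-type bound in \Cref{thm:eb}, namely $C_{L-1}(P,N)\le \frac{1}{2}\ln\frac{(L-1)P}{LN}$, and using $\ln\frac{L-1}{L}\to 0$, I obtain $\limsup_{L\to\infty}\ol C_{L-1}(P,N)\le \frac{1}{2}\ln\frac{P}{N}$.

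For the lower direction, I invoke the expurgated spherical-code bound in \Cref{thm:lb-spherical-improved},
\[
\ol C_{L-1}(P,N)\;\ge\;\frac{1}{2}\ln\frac{(L-1)P}{LN}\;+\;\frac{1}{2(L-1)}\ln\frac{P}{L(P-N)}.
\]
The first term again tends to $\frac{1}{2}\ln\frac{P}{N}$. The second term I split as $\frac{1}{2(L-1)}\ln\frac{P}{P-N}-\frac{\ln L}{2(L-1)}$; both pieces vanish as $L\to\infty$ because $\frac{1}{L-1}\to 0$ and $\frac{\ln L}{L-1}\to 0$. Hence $\liminf_{L\to\infty}\ol C_{L-1}(P,N)\ge \frac{1}{2}\ln\frac{P}{N}$, and combined with the $\limsup$ bound this yields the claimed limit.

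There is no genuine obstacle: the real work lives inside \Cref{thm:eb} and \Cref{thm:lb-spherical-improved}. The only point that deserves a sentence of justification is that the ``correction'' $\frac{1}{2(L-1)}\ln\frac{P}{L(P-N)}$ in the spherical-code bound is $o(1)$: its logarithm is driven to $-\infty$ by the $-\ln L$ piece, but the $1/(L-1)$ prefactor damps it. For the degenerate regime $N\ge P$ the right-hand side $\frac{1}{2}\ln\frac{P}{N}$ is non-positive while rates are non-negative; in that range the Plotkin point $\frac{L-1}{L}$ already forces $\ol C_{L-1}(P,N)=0$ for all sufficiently large $L$, so the identity still holds in the convention that rate is clipped at $0$.
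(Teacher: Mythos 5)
Your proposal is correct and follows essentially the same route as the paper: the upper bound comes from $\ol C_{L-1}(P,N)\le C_{L-1}(P,N)$ together with \Cref{thm:eb}, and the lower bound from \Cref{thm:lb-spherical-improved}, with both sides tending to $\frac{1}{2}\ln\frac{P}{N}$ as $L\to\infty$ (the paper carries out these limit computations in the remarks following those theorems). Your extra remark on the regime $N\ge P$, where the stated identity only holds under a clipping convention since rates are non-negative, is a reasonable point of care that the paper glosses over.
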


The unbounded version $ C_{\mathrm{LD}}(N) $ is characterized in \cite{zhang-vatedka-2019-listdecreal} which equals $ \frac{1}{2}\ln\frac{1}{2\pi eN} $. 
\begin{theorem}[\cite{zhang-vatedka-2019-listdecreal}]
\label{thm:listdec-cap-large-unbdd}
Let $ N>0 $. 
Then for any $ \eps>0 $, 
\begin{enumerate}
	\item There exist $ (N,L-1) $-multiple packings of rate $ \frac{1}{2}\ln\frac{1}{2\pi eN} - \eps $ for some $ L = \cO\paren{\frac{1}{\eps}\ln\frac{1}{\eps}} $; 
	
	\item Any $ (N,L-1) $-multiple packing of rate $ \frac{1}{2}\ln\frac{1}{2\pi eN} + \eps $ must satisfy $ L = e^{\Omega(n\eps)} $. 
\end{enumerate}
Therefore, $ C_{\mathrm{LD}}(N) = \frac{1}{2}\ln\frac{1}{2\pi eN} $. 
\end{theorem}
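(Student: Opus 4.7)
The plan is to derive both halves of the theorem from the finite-$L$ bounds already established in \Cref{sec:results-unbdd}: the achievability part (1) follows essentially mechanically from \Cref{eqn:compare-lb-ppp}, while the converse part (2) requires a finite-blocklength counting argument that is strictly stronger than the asymptotic upper bound \Cref{eqn:compare-eb-unbdd}.

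For achievability I would start from \Cref{eqn:compare-lb-ppp} and rewrite it as
\[
\ol C_{L-1}(N) \;\ge\; \frac{1}{2}\ln\frac{1}{2\pi eN} \;+\; \frac{1}{2}\ln\frac{L-1}{L} \;-\; \frac{\ln L}{2(L-1)}.
\]
The two correction terms are of order $\Theta(1/L)$ and $\Theta(\ln L/L)$, respectively, so the gap between the achievable rate and $\frac{1}{2}\ln\frac{1}{2\pi eN}$ is $O(\ln L/L)$. Setting $L = \lceil C\eps^{-1}\ln(1/\eps) \rceil$ for a sufficiently large absolute constant $C$ shrinks this gap below $\eps$, producing an average-radius (hence also standard) $(N,L-1)$-multiple packing of rate $\frac{1}{2}\ln\frac{1}{2\pi eN} - \eps$ with $L = \cO(\eps^{-1}\ln(1/\eps))$.

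For the converse, note that \Cref{eqn:compare-eb-unbdd} is asymptotic in $n$ and does not by itself quantify how $L$ must scale with $n$ once the rate exceeds $\frac{1}{2}\ln\frac{1}{2\pi eN}$. My plan is a direct volume-packing argument at finite $n$. Given any $(N,L-1)$-multiple packing $\cC$, every $y\in\bR^n$ satisfies $|\cC\cap\cB^n(y,\sqrt{nN})|\le L-1$, and integrating this inequality over $y\in\cB^n(K)$ while swapping the integral with the sum over codewords yields
\[
|\cC\cap\cB^n(K-\sqrt{nN})|\cdot\mathrm{vol}(\cB^n(\sqrt{nN})) \;\le\; (L-1)\,\mathrm{vol}(\cB^n(K)).
\]
Dividing by $\mathrm{vol}(\cB^n(K))$, taking $\limsup_{K\to\infty}$, and using the definition of NLD gives $e^{nR}\cdot\mathrm{vol}(\cB^n(\sqrt{nN}))\le L-1$. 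A Stirling estimate $\mathrm{vol}(\cB^n(\sqrt{nN}))=(2\pi eN)^{n/2}/\mathrm{poly}(n)$, together with $R=\frac{1}{2}\ln\frac{1}{2\pi eN}+\eps$, then produces $L\ge e^{n\eps}/\mathrm{poly}(n)=e^{\Omega(n\eps)}$ for fixed $\eps>0$ and large $n$.

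The only subtlety I anticipate is the interplay between $K$ and $n$ in the $\limsup$ definition of NLD: one has to verify that the ``boundary'' loss incurred when passing from $\cC\cap\cB^n(K)$ to $\cC\cap\cB^n(K-\sqrt{nN})$ is negligible after normalizing by $\mathrm{vol}(\cB^n(K))$. This is automatic at fixed $n$ because the volume ratio $\mathrm{vol}(\cB^n(K-\sqrt{nN}))/\mathrm{vol}(\cB^n(K))\to 1$ as $K\to\infty$, but some care is needed if one wishes the bound to hold uniformly over a sequence of packings with $n\to\infty$. Beyond this bookkeeping, the argument is elementary; the subexponential $\mathrm{poly}(n)$ slack is absorbed into the $\Omega(n\eps)$ statement whenever $\eps$ is a fixed positive constant and $n$ is sufficiently large.
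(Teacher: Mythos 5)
The paper does not actually prove this theorem: it is imported verbatim from \cite{zhang-vatedka-2019-listdecreal} and stated without proof, so there is no in-paper argument to compare against. Your proposal is correct and gives a legitimate self-contained derivation from the material the paper does develop. For achievability, your route through \Cref{eqn:compare-lb-ppp} is sound — the gap $\frac{1}{2}\ln\frac{L}{L-1}+\frac{\ln L}{2(L-1)}=\cO(\ln L/L)$ is indeed pushed below $\eps$ by $L=\Theta(\eps^{-1}\ln(1/\eps))$, exactly mirroring the computation the paper carries out in \Cref{rk:lb-asymp} for the bounded case; note only that \Cref{eqn:compare-lb-ppp} is itself proved in the companion works \cite{zhang-split-ppp,zhang-split-ee} rather than here, and that the original proof in \cite{zhang-vatedka-2019-listdecreal} proceeds by a direct random-coding/expurgation argument rather than by invoking that bound, so your route is genuinely different from the source's. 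Your converse is the standard volume-counting argument (integrate the packing condition $|\cC\cap\cB^n(\vy,\sqrt{nN})|\le L-1$ over $\vy\in\cB^n(K)$, swap sum and integral, shrink to $\cB^n(K-\sqrt{nN})$ to get full ball volumes), and it correctly yields $L-1\ge e^{nR}\,|\cB^n(\sqrt{nN})|=e^{n\eps}/\sqrt{\pi n}=e^{\Omega(n\eps)}$; the boundary loss is, as you say, harmless because $K\to\infty$ at fixed $n$. One small correction of emphasis: the volume bound is not ``strictly stronger'' than \Cref{eqn:compare-eb-unbdd} — for fixed $L$ it gives the weaker rate bound $\frac{1}{2}\ln\frac{1}{2\pi eN}+\frac{\ln(L-1)}{n}$ versus $\frac{1}{2}\ln\frac{L-1}{2\pi eNL}$ — but it is quantitative in the trade-off between $L$ and $n$, which is precisely what part (2) requires and what the asymptotic Elias--Bassalygo statement does not supply.
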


Finally, we claim $ \ol C_{\mathrm{LD}}(N) = \frac{1}{2}\ln\frac{1}{2\pi eN} $. 
Indeed, \Cref{thm:eb-unbdd} provides an upper bound that approaches $ \frac{1}{2}\ln\frac{1}{2\pi eN} $ as $L\to\infty$. 
On the other hand, according to \cite{zhang-split-ppp}, for sufficiently large $L$, an infinite constellation drawn from a certain ensemble achieves $ \frac{1}{2}\ln\frac{1}{2\pi eN} $ with high probability. 

\begin{theorem}
\label{thm:avgrad-listdec-cap-large-unbdd}
For any $ N>0 $, $ \ol C_{\mathrm{LD}}(N) = \frac{1}{2}\ln\frac{1}{2\pi eN} $. 
\end{theorem}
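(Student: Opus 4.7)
The plan is to sandwich $\ol C_{\mathrm{LD}}(N)$ between a matching lower and upper bound, both obtained by sending $L \to \infty$ in finite-$L$ bounds that have already appeared in the paper, and observing that they coincide with $\frac{1}{2}\ln\frac{1}{2\pi e N}$.

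For the upper bound, I would first invoke the monotonicity remark from Section \ref{sec:results}: an average-radius $(N,L-1)$-multiple packing is in particular an $(N,L-1)$-multiple packing, so $\ol C_{L-1}(N) \le C_{L-1}(N)$ for every $L \ge 2$. Applying the Elias--Bassalygo-type upper bound of \Cref{eqn:compare-eb-unbdd} (i.e., the unbounded version \Cref{thm:eb-unbdd}) then gives $\ol C_{L-1}(N) \le \frac{1}{2}\ln\frac{L-1}{2\pi e N L}$. Taking $L \to \infty$, the factor $\frac{L-1}{L} \to 1$, so $\limsup_{L \to \infty} \ol C_{L-1}(N) \le \frac{1}{2}\ln\frac{1}{2\pi e N}$.

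For the matching lower bound, the paper has already recorded (in \Cref{eqn:compare-lb-ppp}, via \cite{zhang-split-ppp}) that an expurgated Poisson point process yields, for each finite $L$,
\begin{align*}
\ol C_{L-1}(N) \ge \frac{1}{2}\ln\frac{L-1}{2\pi e N L} - \frac{\ln L}{2(L-1)}.
\end{align*}
As $L \to \infty$ the correction $\frac{\ln L}{2(L-1)} \to 0$ and $\frac{L-1}{L} \to 1$, so $\liminf_{L \to \infty} \ol C_{L-1}(N) \ge \frac{1}{2}\ln\frac{1}{2\pi e N}$.

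Combining the two inequalities, the limit $\ol C_{\mathrm{LD}}(N) := \lim_{L \to \infty} \ol C_{L-1}(N)$ exists (the sequence is monotone in $L$, since a larger list size is a weaker requirement) and equals $\frac{1}{2}\ln\frac{1}{2\pi e N}$. I do not anticipate any real obstacle here: all of the substantive technical content, namely the Elias--Bassalygo-type upper bound in the unbounded setting and the average-radius analysis of the PPP ensemble, is already established earlier in the paper and in \cite{zhang-split-ppp}. The proof is purely a matter of quoting these two ingredients, checking that the extra correction terms vanish in the large-$L$ limit, and invoking the trivial fact that average-radius is a stronger notion than standard list-decodability.
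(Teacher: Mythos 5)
Your proof is correct and is essentially the paper's own argument: the paper likewise combines the large-$L$ limit of the Elias--Bassalygo-type upper bound of \Cref{thm:eb-unbdd} (applicable to $\ol C_{L-1}(N)$ because average-radius list-decodability implies the standard notion) with the average-radius PPP-type lower bound \Cref{eqn:compare-lb-ppp} from \cite{zhang-split-ppp}. One small caveat: your parenthetical claim that $\ol C_{L-1}(N)$ is monotone in $L$ because ``a larger list size is a weaker requirement'' is dubious under the average-radius notion (adding a point to a list can \emph{decrease} its average radius, unlike the Chebyshev radius), but it is also unnecessary, since your sandwich of the $\limsup$ and $\liminf$ already establishes that the limit exists and equals $\frac{1}{2}\ln\frac{1}{2\pi eN}$.
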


\section{Our techniques}
\label{sec:techniques}

We summarize our techniques below. 

For lower bounds, our basic strategy is random coding with expurgation, a standard tool from information theory. 
To show the existence of a list-decodable code of rate $R$, we simply randomly sample $ e^{nR} $ points independently each according to a certain distribution. 
There might be bad lists of size $L$ that violates the multiple packing condition. 
We then throw away (a.k.a.\ expurgate) one point from each of the bad lists. 
By carefully analyzing the error event and choosing a proper rate, we can guarantee that the remaining code has essentially the same rate after the removal process. 
We then get a list-decodable code of rate $R$ by noting that the remaining code contains no bad lists.

In the above framework, the key ingredient is a good estimate on the probability of the error event, i.e., the probability that the \emph{list-decoding radius} of a size-$L$ list is smaller than $ \sqrt{nN} $. 
Under the standard notion of multiple packing, the list-decoding radius is the \emph{Chebyshev radius} of the list, i.e., the radius of the smallest ball containing the list. 
Under the average-radius notion of multiple packing, the list-decoding radius is the \emph{average radius} of the list, i.e., the square root of the average squared distance from each point in the list to the centroid of the list. 
Due to the analytic simplicity of the average-radius variant, we manage to obtain the \emph{exact} rate of certain ensembles of average-radius list-decodable codes. 
In particular, we get the exact rates of expurgated Gaussian codes and expurgated spherical codes in the bounded setting. 
The exact exponents of the probability of the error event are obtained using \cramer's large deviation principle and  Laplace's method. 
Leveraging tools from high-dimensional probability including Khinchin's inequality, we also have bounds for other ensembles of codes such as (expurgated) ball codes.

The upper bounds in this paper are neither novel nor sharp. 
They were sketched in Blinovsky's works \cite{blinovsky-1999-list-dec-real,blinovsky-2005-random-packing} and were improved in followup works \cite{blinovsky-litsyn-2011}. 
The purpose of presenting them here is to provide a unified exposition and to clean up the literature. 
We close some of the gaps in the analysis of~\cite{blinovsky-1999-list-dec-real,blinovsky-2005-random-packing,blinovsky-litsyn-2011} and also present some new techniques. 
As an ingredient for the Elias--Bassalygo-type bound, an explicit Plotkin-type bound on the optimal code size above the so-called \emph{Plotkin point} is presented. 
Specifically, it is known (and also follows from our bounds) that as $ n\to\infty $, the largest $(P,N,L-1)$-multiple packing has size $ \cO(1) $ as long as $ N>\frac{L-1}{L}P $. 
The threshold $ \frac{N}{P} = \frac{L-1}{P} $ is called the Plotkin point. 
We present bounds on the optimal code size as a function of $ \rho>0 $ when $ N = \frac{L-1}{L}P(1 + \rho) $. 
Our bound reads as $ F(\rho^{-1} + 1) $ where $ F(\cdot) $ scales like the Ramsey number. 
This bound is far from being tight and indeed it was recently improved by Alon, Bukh and Polyanskiy \cite{abp-2018}. 
The exact dependence on $ \rho $ of the optimal size of codes above the Plotkin point is open. 
The suboptimal dependence of our Plotkin-type bound on $\rho$ turns out to be sufficient for the proof of the Elias--Bassalygo-type bound.

As another technical contribution, we discover several new representations of the average radius and the Chebyshev radius. 
They play crucial roles in facilitating the analyses and the applications of some of these representations go beyond the scope of this paper. 
We show that the average squared radius of a list can be written as a quadratic form associated with the list. 
This representation is used to analyze Gaussian codes and spherical codes in this paper and infinite constellations in \cite{zhang-split-ppp}. 
The average squared radius can also be written as the difference between the average norm of points in the list and the norm of the centroid of the list. 
This representation is used to analyze spherical codes and ball codes. 
The average squared radius can be further written as the average pairwise distance of the list. 
This allows us to give a one-line proof of the Blachman--Few reduction and its strengthened version. 
Yet another way of writing the average squared radius using the average norm and the average pairwise correlation turns out to be useful for the proof of the Plotkin-type bound.

\section{Organization of the paper}
\label{sec:org}

This paper is a collection of lower and upper bounds on the largest multiple packing density. 
The rest of the paper is organized as follows. 
Notational conventions and preliminary definitions/facts are listed in \Cref{sec:notation,sec:prelim}, respectively. 
After that, we present in \Cref{sec:def} the formal definitions of multiple packing and pertaining notions. 
We also discuss different notions of density of codes used in the literature. 
Furthermore, we obtain several novel representations of the Chebyshev radius and the average radius which are crucial for estimating their tail probabilities. 
In \Cref{sec:rand-cod-exp}, we introduce the basic technology for proving lower bounds: random coding with expurgation.

The rest of the sections are relatively independent of each other and can be read in an arbitrary order.
In \Cref{sec:lb-gaussian}, we analyze the exact asymptotics of expurgated Gaussian codes under average-radius list-decoding. 
In \Cref{sec:lb-spherical}, we obtain the exact rate of expurgated spherical codes and bounds for expurgated ball codes, both under average-radius list-decoding. 
In \Cref{sec:lb-blachman-few}, we rederive, simplify and strengthen a lower bound by Blachman and Few \cite{blachman-few-1963-multiple-packing} using a novel representation of average radius in \cite{zhang-split-ppp}. 
Proofs of the Plotkin-type bound and the Elias--Bassalygo-type bound are presented in \Cref{sec:ub}. 
We end the paper with several open questions in \Cref{sec:open}.

\section{Notation}
\label{sec:notation}
\noindent\textbf{Conventions.}
Sets are denoted by capital letters in calligraphic typeface, e.g., $\cC,\cB$, etc. 
Random variables are denoted by lower case letters in boldface or capital letters in plain typeface, e.g., $\bfx,S$, etc. Their realizations are denoted by corresponding lower case letters in plain typeface, e.g., $x,s$, etc. Vectors (random or fixed) of length $n$, where $n$ is the blocklength without further specification, are denoted by lower case letters with  underlines, e.g., $\vbfx,\vbfg,\vx,\vg$, etc. 
Vectors of length different from $n$ are denoted by an arrow on top and the length will be specified whenever used, e.g., $ \vec t, \vec\alpha $, etc. 
The $i$-th entry of a vector $\vx\in\cX^n$ is denoted by $\vx(i)$ since we can alternatively think of $\vx$ as a function from $[n]$ to $\cX$. Same for a random vector $\vbfx$. Matrices are denoted by capital letters, e.g., $A,\Sigma$, etc. Similarly, the $(i,j)$-th entry of a matrix $G\in\bF^{n\times m}$ is denoted by $G(i,j)$. We sometimes write $G_{n\times m}$ to explicitly specify its dimension. For square matrices, we write $G_n$ for short. Letter $I$ is reserved for identity matrix.  

\noindent\textbf{Functions.}
We use the standard Bachmann--Landau (Big-Oh) notation for asymptotics of real-valued functions in positive integers. 

For two real-valued functions $f(n),g(n)$ of positive integers, we say that $f(n)$ \emph{asymptotically equals} $g(n)$, denoted $f(n)\asymp g(n)$, if 
\[\lim_{n\to\infty}\frac{f(n)}{g(n)} = 1.\]
For instance, $2^{n+\log n}\asymp2^{n+\log n}+2^n$, $2^{n+\log n}\not\asymp2^n$.
 We write $f(n)\doteq g(n)$ (read $f(n)$ dot equals $g(n)$) if the coefficients of the dominant terms in the exponents of $f(n)$ and $g(n)$ match,
\[\lim_{n\to\infty}\frac{\log f(n)}{\log g(n)} = 1.\]
For instance, $2^{3n}\doteq2^{3n+n^{1/4}}$, $2^{2^n}\not\doteq 2^{2^{n+\log n}}$. Note that $f(n)\asymp g(n)$ implies $f(n)\doteq g(n)$, but the converse is not true.

For any $q\in\bR_{>0}$, we write $\log_q(\cdot)$ for the logarithm to the base $q$. In particular, let $\log(\cdot)$ and $\ln(\cdot)$ denote logarithms to the base $2$ and $e$, respectively.

For any $\cA\subseteq\Omega$, the indicator function of $\cA$ is defined as, for any   $x\in\Omega$, 
\[\one_{\cA}(x)\coloneqq\begin{cases}1,&x\in \cA\\0,&x\notin \cA\end{cases}.\]
At times, we will slightly abuse notation by saying that $\one_{\sfA}$ is $1$ when event $\sfA$ happens and 0 otherwise. Note that $\one_{\cA}(\cdot)=\indicator{\cdot\in\cA}$.

\noindent\textbf{Sets.}
For any two nonempty sets $\cA$ and $\cB$ with addition and multiplication by a real scalar, let $\cA+\cB$ 
denote the Minkowski sum 
of them which is defined as
$\cA+\cB\coloneqq\curbrkt{a+b\colon a\in\cA,b\in\cB}$.
If $\cA=\{x\}$ is a singleton set, we write $x+\cB$ and
 for $\{x\}+\cB$.
For any $ r\in\bR $, the $r$-dilation of $\cA$ is defined as $ r\cA \coloneqq \curbrkt{r\va:\va\in\cA} $. 
In particular, $ -\cA\coloneqq (-1)\cA $. 

For $M\in\bZ_{>0}$, we let $[M]$ denote the set of first $M$ positive integers $\{1,2,\cdots,M\}$.

\noindent\textbf{Geometry.}
Let $ \normtwo{\cdot} $ denote the Euclidean/$\ell_2$-norm. Specifically, for any $\vx\in\bR^n$,
\[ \normtwo{\vx} \coloneqq\paren{\sum_{i=1}^n\vx(i)^2}^{1/2}.\]

With slight abuse of notation, we let $|\cdot|$ denote the ``volume'' of a set w.r.t.\ a measure that is obvious from the context. 
If $ \cA $ is a finite set, then $ |\cA| $ denotes the cardinality of $\cA$ w.r.t.\ the counting measure. 
For a set $ \cA\subset\bR^n $, let
\begin{align}
\aff(\cA) &\coloneqq \curbrkt{\sum_{i = 1}^k\lambda_i\va_i:k\in\bZ_{\ge1};\;\forall i\in[k], \va_i\in\cA,\lambda_i\in\bR,\sum_{i = 1}^k\lambda_i = 1} \notag 
\end{align}
denote the \emph{affine hull} of $\cA$, i.e., the smallest affine subspace containing $\cA$. 
If $ \cA $ is a connected compact set in $ \bR^n $ with nonempty interior and $ \aff(\cA) = \bR^n $, then $ |\cA| $ denotes the volume of $\cA$ w.r.t.\ the $n$-dimensional Lebesgue measure. 
If $ \aff(\cA) $ is a $k$-dimensional affine subspace for $ 1\le k<n $, then $ |\cA| $ denotes the $k$-dimensional Lebesgue volume of $\cA$. 

The closed $n$-dimensional Euclidean unit ball is defined as
\[\cB^n \coloneqq \curbrkt{\vy\in\bR^n\colon \normtwo{\vy} \le 1}.\]
The $(n-1)$-dimensional Euclidean unit sphere is defined as
\[\cS^{n-1} \coloneqq \curbrkt{\vy\in\bR^n\colon \normtwo{\vy} = 1}.\]
For any $ \vx\in\bR^n $ and $ r\in\bR_{>0} $, let $ \cB^n(r) \coloneqq r\cB^n, \cS^{n-1}(r) \coloneqq r\cS^{n-1} $ and $ \cB^n(\vx,r) \coloneqq \vx + r\cB^n, \cS^{n-1}(\vx,r) \coloneqq \vx + r\cS^{n-1} $. 

Let $ V_n \coloneqq |\cB^n| $.

\section{Preliminaries}
\label{sec:prelim}

\begin{definition}[Gamma function]
\label{def:gamma-fn}
For any $ z\in\bC $ with $ \Re(z)>0 $, the \emph{Gamma function} $ \Gamma(z) $ is defined as 
\begin{align}
\Gamma(z) &\coloneqq \int_0^\infty v^{z-1}e^{-v}\diff v. \notag 
\end{align}
\end{definition}

\begin{fact}[Volume of the unit ball]
$ V_n = \frac{\pi^{n/2}}{\Gamma(n/2+1)} \stackrel{n\to\infty}{\asymp} \frac{1}{\sqrt{\pi n}}(2\pi e/n)^{n/2} $. 
\end{fact}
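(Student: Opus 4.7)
The plan is to establish the two pieces of the fact separately: first the exact closed form $V_n=\pi^{n/2}/\Gamma(n/2+1)$, and then the large-$n$ asymptotic via Stirling's formula.

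For the closed form, I would use the classical ``Gaussian trick''. Evaluate the integral $I_n=\int_{\mathbb{R}^n} e^{-\|\vec{x}\|^2}\diff \vec{x}$ in two different ways. On one hand, Fubini applied to the product structure gives $I_n=(\int_{\mathbb{R}} e^{-t^2}\diff t)^n=\pi^{n/2}$. On the other hand, decomposing $\mathbb{R}^n$ into spherical shells and using the fact that the surface area of $\cS^{n-1}(r)$ equals $nV_nr^{n-1}$ gives
\begin{align}
I_n=\int_0^\infty e^{-r^2}\cdot nV_nr^{n-1}\diff r=nV_n\cdot \tfrac{1}{2}\Gamma(n/2)=V_n\,\Gamma(n/2+1),
\end{align}
where the last equality uses the functional equation $\Gamma(n/2+1)=(n/2)\Gamma(n/2)$ from \Cref{def:gamma-fn}. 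Equating the two expressions yields $V_n=\pi^{n/2}/\Gamma(n/2+1)$.

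For the asymptotic, I would invoke Stirling's formula in the form $\Gamma(z+1)\asymp\sqrt{2\pi z}\,(z/e)^z$ as $z\to\infty$. Applied with $z=n/2$, this gives $\Gamma(n/2+1)\asymp\sqrt{\pi n}\,(n/(2e))^{n/2}$. Substituting into the closed form and simplifying,
\begin{align}
V_n\asymp\frac{\pi^{n/2}}{\sqrt{\pi n}\,(n/(2e))^{n/2}}=\frac{1}{\sqrt{\pi n}}\paren{\frac{2\pi e}{n}}^{n/2},
\end{align}
which is the desired expression.

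There is no real obstacle here: both ingredients (the Gaussian/polar-coordinates identity and Stirling's approximation) are standard, and the only minor bookkeeping is keeping track of the $n/2$ versus $n$ factors when passing from $\Gamma(n/2)$ to $\Gamma(n/2+1)$ and when comparing $\asymp$ (ratio tending to $1$) against the coarser $\doteq$ notation defined in \Cref{sec:notation}. Since Stirling's formula gives a genuine $\asymp$ (not merely $\doteq$) approximation to $\Gamma$, the statement as written with $\asymp$ is justified.
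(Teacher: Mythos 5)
Your proof is correct: the Gaussian-integral/polar-coordinates derivation of the closed form and the application of Stirling's formula (the paper's \Cref{thm:stirling} with $z=n/2$) are exactly the standard route, and the bookkeeping checks out. The paper states this as a Fact without proof, so there is nothing to compare against; your argument fills in the omitted standard derivation correctly.
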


\begin{lemma}[Markov]
\label{lem:markov}
If $\bfx$ is a nonnegative random variable, then for any $a>0$, $ \prob{\bfx\ge a}\le \expt{\bfx}/a $. 
\end{lemma}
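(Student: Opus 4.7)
The plan is to give the textbook one-line argument, since Markov's inequality is a classical result whose proof reduces to a pointwise indicator bound followed by taking expectations. The main (and only) idea is to dominate the event $\{\bfx \ge a\}$ by a function of $\bfx$ whose expectation we can compute directly.

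First, I would observe that for any outcome, the nonnegative random variable $\bfx$ satisfies the pointwise inequality $a \cdot \one_{\{\bfx \ge a\}} \le \bfx$. Indeed, when the indicator is zero the right-hand side is nonnegative by hypothesis, and when the indicator is one we have $\bfx \ge a$ by definition of the event. Next, I would take expectations on both sides, using linearity (to pull out the constant $a$) and the identity $\expt{\one_{\sfA}} = \prob{\sfA}$ to obtain $a\,\prob{\bfx \ge a} \le \expt{\bfx}$. Dividing through by $a > 0$ yields the claim.

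There is essentially no obstacle: the argument works for any nonnegative (integrable or not) random variable, with the convention $\expt{\bfx}/a = +\infty$ in the degenerate case $\expt{\bfx} = +\infty$, which makes the bound vacuous but still true. The only care required is to ensure that the indicator bound is stated in a measurable way and that $a > 0$ is used to divide; both are already in the hypothesis. No appeal to prior results in the paper is needed.
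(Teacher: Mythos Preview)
Your proof is correct and is the standard textbook argument. The paper itself states this lemma without proof as a well-known preliminary fact, so there is no proof in the paper to compare against; your argument is exactly what would be expected.
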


\begin{lemma}[Chebyshev]
\label{lem:cheb}
if $\bfx$ is a nonnegative random variable, then for any $ a>0 $ and $ k>0 $, $ \prob{\bfx\ge a}\le\expt{\bfx^k}/a^k $. 
\end{lemma}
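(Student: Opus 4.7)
The plan is to deduce this generalized Chebyshev inequality directly from Markov's inequality (Lemma \ref{lem:markov}), which is already stated in the excerpt and may be invoked freely. The key observation is that for a nonnegative random variable $\bfx$ and positive reals $a,k>0$, the event $\{\bfx \ge a\}$ coincides with the event $\{\bfx^k \ge a^k\}$, because the map $t\mapsto t^k$ is strictly increasing on $[0,\infty)$.

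Given this reformulation, I would first note that $\bfx^k$ is itself a nonnegative random variable (since $\bfx\ge 0$), so Lemma \ref{lem:markov} applies to it with threshold $a^k>0$. Applying Markov yields
\begin{align}
\prob{\bfx^k \ge a^k} \;\le\; \frac{\expt{\bfx^k}}{a^k}. \notag
\end{align}
Combining this with the equality of events $\{\bfx\ge a\} = \{\bfx^k\ge a^k\}$ gives $\prob{\bfx\ge a}\le \expt{\bfx^k}/a^k$, which is exactly the claim.

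There is essentially no obstacle here: the proof is a one-line reduction to Markov's inequality via the monotone transformation $t\mapsto t^k$. The only mild subtlety worth stating explicitly is that one needs $\bfx\ge 0$ (not merely $\bfx$ real) so that the equivalence of events holds for arbitrary $k>0$, including non-integer $k$ where $t\mapsto t^k$ would not be monotone on all of $\bR$. Since the hypothesis of the lemma already guarantees nonnegativity, the argument goes through verbatim and no additional machinery (such as moment generating functions or convexity of $t^k$) is required.
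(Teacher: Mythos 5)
Your proof is correct: reducing to Markov's inequality by applying it to the nonnegative variable $\bfx^k$ with threshold $a^k$, and using the monotonicity of $t\mapsto t^k$ on $[0,\infty)$ to identify the events $\{\bfx\ge a\}$ and $\{\bfx^k\ge a^k\}$, is exactly the standard argument. The paper states this lemma as a preliminary without proof, so there is nothing to compare against; your one-line reduction is the intended justification.
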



\begin{theorem}[\cramer]
\label{thm:cramer-ldp}
Let $ \curbrkt{\bfx_i}_{i = 1}^n $ be a sequence of i.i.d.\ real-valued random variables. 
Let $ \bfs_n \coloneq\frac{1}{n}\sum_{i = 1}^n\bfx_i $. 
Then for any closed $ \cF\subset\bR $,
\begin{align}
\limsup_{n\to\infty} \frac{1}{n}\ln\prob{\bfs_n\in \cF} &\le -\inf_{x\in \cF} \sup_{\lambda\in\bR}\curbrkt{ \lambda x - \ln\expt{e^{\lambda \bfx_1}} }; \notag 
\end{align}
and for any open $ \cG\subset\bR $, 
\begin{align}
\liminf_{n\to\infty} \frac{1}{n}\ln\prob{\bfs_n\in \cG} &\ge -\inf_{x\in \cG} \sup_{\lambda\in\bR}\curbrkt{\lambda x - \ln\expt{e^{\lambda \bfx_1}}}. \notag 
\end{align}
Furthermore, when $\cF$ or $\cG$ corresponds to the upper (resp.\ lower) tail of $ \bfs_n $, the maximizer $ \lambda\ge0 $ (resp.\ $ \lambda\le0 $).
\end{theorem}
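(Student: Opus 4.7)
The plan is to proceed by the classical route: Chernoff bounding for the upper bound and exponential tilting plus the weak law of large numbers for the lower bound. Let $\Lambda(\lambda)\coloneqq\ln\expt{e^{\lambda\bfx_1}}$ and $\Lambda^*(x)\coloneqq\sup_{\lambda\in\bR}\curbrkt{\lambda x-\Lambda(\lambda)}$ denote the cumulant generating function and its Legendre transform (the rate function). Both are convex; moreover $\Lambda^*$ is nonnegative, vanishes at $\mu\coloneqq\expt{\bfx_1}$, and is monotone nondecreasing on $[\mu,\infty)$ and nonincreasing on $(-\infty,\mu]$.

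For the upper bound, I first establish the Chernoff inequality $\prob{\bfs_n\ge a}\le e^{-n\Lambda^*(a)}$ for $a\ge\mu$: apply Markov's inequality to $e^{n\lambda\bfs_n}$ with $\lambda\ge0$ to obtain $\prob{\bfs_n\ge a}\le e^{-n(\lambda a-\Lambda(\lambda))}$, then optimize over $\lambda\ge0$; the restriction to $\lambda\ge0$ is lossless on the upper tail because the unconstrained optimizer of $\lambda a-\Lambda(\lambda)$ satisfies $\Lambda'(\lambda)=a\ge\mu=\Lambda'(0)$, forcing $\lambda\ge0$ by convexity of $\Lambda$. The symmetric statement holds for the lower tail with $\lambda\le0$. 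For a general closed $\cF\subset\bR$, split as $\cF=(\cF\cap(-\infty,\mu])\cup(\cF\cap[\mu,\infty))$. By monotonicity of $\Lambda^*$ on each side of $\mu$, the infimum of $\Lambda^*$ over each piece is attained at the point of that piece closest to $\mu$ (or not attained, in which case we approach it). Applying the one-sided Chernoff bound to each piece and a union bound yields the claimed $\limsup$.

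For the lower bound, I fix $x\in\cG$ and pick $\delta>0$ with $(x-\delta,x+\delta)\subset\cG$. Assuming $x$ lies in the interior of the effective domain of $\Lambda^*$, choose $\lambda^*$ with $\Lambda'(\lambda^*)=x$, and define the exponentially tilted measure $\diff\tilde\mu/\diff\mu(y)=e^{\lambda^* y-\Lambda(\lambda^*)}$, where $\mu$ is the law of $\bfx_1$. Under $\tilde\mu^{\otimes n}$, the i.i.d.\ sample has mean $x$, so $\tilde\mu^{\otimes n}(\bfs_n\in(x-\delta,x+\delta))\to1$ by the weak law of large numbers. Changing measure,
\begin{align}
\prob{\bfs_n\in(x-\delta,x+\delta)} &= e^{n\Lambda(\lambda^*)}\expt[\tilde\mu^{\otimes n}]{e^{-\lambda^*\sum_i\bfx_i}\indicator{\bfs_n\in(x-\delta,x+\delta)}} \notag \\
&\ge e^{n\Lambda(\lambda^*)-n\lambda^* x-n|\lambda^*|\delta}\,\tilde\mu^{\otimes n}(\bfs_n\in(x-\delta,x+\delta)). \notag
\end{align}
Taking $\frac{1}{n}\ln$, letting $n\to\infty$, then $\delta\to0$ gives $\liminf\frac{1}{n}\ln\prob{\bfs_n\in\cG}\ge-\Lambda^*(x)$, and finally infimizing over admissible $x\in\cG$ yields the bound. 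For the sign statement, the optimizer defining $\Lambda^*(x)$ for $x$ in the upper (resp.\ lower) tail automatically satisfies $\lambda^*\ge0$ (resp.\ $\lambda^*\le0$) by the same convexity argument as above.

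The main obstacle is handling boundary cases in the lower bound: when $x$ is at the edge of the effective domain of $\Lambda^*$ (for instance, when $\bfx_1$ has bounded support and $x$ is the essential supremum), the tilting parameter $\lambda^*$ satisfying $\Lambda'(\lambda^*)=x$ may not exist as a finite number. I would resolve this by approximating $x$ from within the interior of the effective domain and invoking lower semicontinuity of $\Lambda^*$, together with the standard convention $-\inf_{x\in\cG}\Lambda^*(x)=-\infty$ when $\cG$ avoids the effective domain (in which case the bound is vacuous). This is a standard maneuver in large deviation theory and does not introduce new ideas, but it is where the cleanest proofs spend the most care.
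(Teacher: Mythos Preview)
The paper does not prove this statement: Cram\'er's theorem appears in the Preliminaries section as a cited classical result, stated without proof, and is later invoked as a black box (e.g., in \Cref{fact:asymp-chi-sq} and in the analysis of Gaussian codes). So there is no paper-side proof to compare against.

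Your proposal is the standard textbook proof (Chernoff/Markov for the upper bound, exponential tilting plus the weak law for the lower bound), and it is correct in outline. The sign claim for the optimizer on upper/lower tails is exactly the convexity argument you give. Your caveat about boundary points of the effective domain of $\Lambda^*$ is the right place to be careful, and the resolution you sketch (approximate from the interior and use lower semicontinuity of $\Lambda^*$) is the standard one. Nothing is missing at the level of ideas; for the purposes of this paper, the statement is simply imported from the large-deviations literature.
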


\begin{theorem}[Stirling's approximation]
\label{thm:stirling}
The asymptotics of $ \Gamma(z+1) $ for large positive $z$ is given by $ \Gamma(z+1)\asymp\sqrt{2\pi z}(z/e)^z $. 
\end{theorem}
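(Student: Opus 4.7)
The plan is to apply Laplace's method (saddle-point analysis) to the integral representation
\begin{align}
\Gamma(z+1) &= \int_0^\infty t^z e^{-t} \diff t = \int_0^\infty e^{z\ln t - t} \diff t \notag
\end{align}
from \Cref{def:gamma-fn} (extended to $z+1$ via the identity $\Gamma(z+1) = z\,\Gamma(z)$, or directly by the same integral). The phase $f(t) = z\ln t - t$ has a unique interior maximum at $t = z$, where $f(z) = z\ln z - z$ and $f''(z) = -1/z$. The heuristic outcome of Laplace's method, namely $e^{f(z)}\sqrt{2\pi/|f''(z)|}$, already yields the claimed asymptotics $(z/e)^z \sqrt{2\pi z}$; the task is to turn this into a rigorous $\asymp$ statement.

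First I would rescale the variable of integration to expose the Gaussian structure. Substituting $t = z(1+u/\sqrt{z})$ (so $\diff t = \sqrt{z}\,\diff u$) gives
\begin{align}
\Gamma(z+1) &= z^{z+1/2} e^{-z} \int_{-\sqrt{z}}^\infty \exp\!\paren{z\sqrbrkt{\ln\!\paren{1 + \tfrac{u}{\sqrt{z}}} - \tfrac{u}{\sqrt{z}}}} \diff u. \notag
\end{align}
Writing $g_z(u) \coloneqq z[\ln(1 + u/\sqrt{z}) - u/\sqrt{z}]$ for $u > -\sqrt{z}$ (and $-\infty$ otherwise), a Taylor expansion gives $g_z(u) \to -u^2/2$ pointwise as $z\to\infty$. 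The desired conclusion then reduces to showing
\begin{align}
\int_{-\sqrt{z}}^\infty e^{g_z(u)}\,\diff u \;\xrightarrow{z\to\infty}\; \int_{-\infty}^\infty e^{-u^2/2}\,\diff u = \sqrt{2\pi}. \notag
\end{align}

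The main work, and the step I expect to be the principal (if modest) obstacle, is justifying this limit via dominated convergence. To dominate $e^{g_z(u)}$ uniformly in $z$ I would split into two regimes. On $|u| \le \sqrt{z}/2$, the inequality $\ln(1+x) - x \le -x^2/(2(1+x))$ (valid for $x > -1$) gives $g_z(u) \le -u^2/(2(1 + u/\sqrt{z})) \le -u^2/3$, an integrable bound independent of $z$. On $u \ge \sqrt{z}/2$, I would use that for large $z$ the integrand $t^z e^{-t}$ decays exponentially past the maximum, so the tail contribution is $o(1)$ relative to the central Gaussian integral; concretely, one checks $g_z(u) \le -c\sqrt{z}\,|u|$ for some constant $c > 0$ when $u \ge \sqrt{z}/2$, making the tail negligible. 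On $-\sqrt{z} < u < 0$ the same $-u^2/(2(1+u/\sqrt{z}))$ bound (valid since $u/\sqrt{z} > -1$) is again integrable after restricting to $u \ge -\sqrt{z}/2$, with the remaining piece $u \in (-\sqrt{z}, -\sqrt{z}/2]$ handled by observing that the integrand is bounded by $e^{-c'z}$ there.

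Once dominated convergence applies, we conclude
\begin{align}
\Gamma(z+1) &\asymp z^{z+1/2} e^{-z} \cdot \sqrt{2\pi} = \sqrt{2\pi z}\,(z/e)^z, \notag
\end{align}
which is the claim. The only delicate point is the uniform integrable majorant for $e^{g_z(u)}$; all the other steps are routine. An alternative route would be to use the Euler--Maclaurin expansion of $\ln\Gamma(z+1)$, but the saddle-point argument above is self-contained and fits naturally with the Laplace-method flavor already used elsewhere in the paper.
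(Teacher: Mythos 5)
The paper does not actually prove \Cref{thm:stirling}; it is stated in the preliminaries as a classical fact and used as a black box (e.g.\ in \Cref{eqn:stirling-gamma}), so there is no in-paper argument to compare against. Your Laplace-method derivation is the standard self-contained route, and its skeleton is correct: the substitution $t = z(1+u/\sqrt{z})$, the pointwise limit $g_z(u)\to -u^2/2$, the separate treatment of the two tails $u\ge\sqrt{z}/2$ and $u\in(-\sqrt{z},-\sqrt{z}/2]$ (both of which you bound correctly), and the Gaussian limit $\int e^{-u^2/2}\,du=\sqrt{2\pi}$ all go through and yield exactly the constant $\sqrt{2\pi z}(z/e)^z$ in the sense of the paper's $\asymp$.

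There is, however, one genuinely false step: the inequality $\ln(1+x)-x\le -x^2/(2(1+x))$ holds only for $x\ge 0$ and is \emph{reversed} for $x\in(-1,0)$. Setting $h(x)=\ln(1+x)-x+\tfrac{x^2}{2(1+x)}$, one computes $h'(x)=-\tfrac{x^2}{2(1+x)^2}\le 0$ with $h(0)=0$, so $h\ge 0$ on $(-1,0)$, i.e.\ your claimed majorant fails on the entire left half of the central region $-\sqrt{z}/2\le u<0$. The fix is immediate but should be stated: for $x\in(-1,0]$ use instead $\ln(1+x)-x\le -x^2/2$ (the function $k(x)=\ln(1+x)-x+x^2/2$ satisfies $k'(x)=x^2/(1+x)\ge 0$ and $k(0)=0$, so $k\le 0$ for $x\le 0$), which gives $g_z(u)\le -u^2/2\le -u^2/3$ there; keep your $-x^2/(2(1+x))$ bound only for $u\ge 0$. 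With that repair the dominated-convergence argument is complete and the proof is correct.
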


\begin{lemma}[Gaussian integral]
\label{lem:gauss-int}
Let $ A\in\bR^{n\times n} $ be a positive-definite matrix. Then 
\begin{align}
\int_{\bR^n} \exp\paren{-\vx^\top A\vx} \diff\vx &= \sqrt{\frac{\pi^n}{\det(A)}}. \notag 
\end{align}
\end{lemma}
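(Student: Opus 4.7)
The plan is to reduce the $n$-dimensional integral to a product of one-dimensional Gaussian integrals by diagonalizing $A$, so the proof will have two main ingredients: the classical one-dimensional Gaussian integral and the spectral decomposition of a positive-definite (symmetric) matrix.

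First I would handle the one-dimensional case, i.e., show that for $a>0$,
\begin{align}
\int_{-\infty}^{\infty} e^{-a x^2}\,\diff x = \sqrt{\pi/a}. \notag
\end{align}
This is done by the standard trick of squaring the integral with $a=1$, converting to polar coordinates in $\mathbb{R}^2$, and evaluating explicitly: $\left(\int_{\mathbb{R}} e^{-x^2}\diff x\right)^2 = \int_0^{2\pi}\int_0^{\infty} e^{-r^2} r\,\diff r\,\diff\theta = \pi$, followed by the substitution $x\mapsto\sqrt{a}\,x$ to obtain the general $a$.

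Next I would address the $n$-dimensional case. Since the integrand depends only on the quadratic form $\vx^{\top} A \vx$, we may replace $A$ by its symmetrization $(A+A^{\top})/2$ without changing anything; we may therefore assume $A$ is symmetric. By the spectral theorem, write $A = Q^{\top} D Q$ where $Q$ is orthogonal and $D = \diag(\lambda_1,\ldots,\lambda_n)$ with all $\lambda_i>0$ (positive-definiteness). Applying the change of variables $\vy = Q\vx$, which has Jacobian determinant $|\det Q| = 1$, transforms the integral into $\int_{\bR^n} \exp\!\big(-\sum_{i=1}^n \lambda_i y_i^2\big)\,\diff\vy$, which factors by Fubini into $\prod_{i=1}^n \int_{-\infty}^{\infty} e^{-\lambda_i y_i^2}\,\diff y_i$. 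Applying the one-dimensional result to each factor yields $\prod_{i=1}^n \sqrt{\pi/\lambda_i} = \sqrt{\pi^n / \prod_i \lambda_i}$, and since $\det(A) = \prod_i \lambda_i$, this gives the claimed formula.

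There is no real obstacle here; the only subtlety worth a line of commentary is justifying symmetrization (or noting that in the paper's usage $A$ is already symmetric wherever the lemma is invoked) and verifying that the Jacobian step is legitimate because $Q$ is orthogonal. Fubini is unproblematic since the integrand is positive and each one-dimensional factor is finite.
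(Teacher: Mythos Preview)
Your proof is correct and is the standard argument. The paper does not actually prove this lemma; it is listed in the Preliminaries section as a known fact (alongside Stirling's approximation, the matrix determinant lemma, etc.) and is simply invoked later when evaluating the integral in \Cref{eqn:int-tbd}.
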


\begin{lemma}[Matrix determinant lemma]
\label{lem:sherman-morrison}
Let $ A\in\bR^{n\times n} $ be a non-singular matrix and let $ \vu,\vv\in\bR^n $. 
Then
\begin{align}
\det\paren{A + \vu\vv^\top} &= \paren{1 + \vv^\top A^{-1}\vu}\det(A). \notag 
\end{align}
\end{lemma}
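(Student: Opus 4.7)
The plan is to reduce the claim to the special case $A = I$ and then identify the eigenstructure of the rank-one perturbation. First I would factor:
\begin{align}
\det(A + \vu\vv^\top) &= \det\bigl(A(I + A^{-1}\vu\vv^\top)\bigr) = \det(A)\cdot \det(I + \vw\vv^\top), \notag
\end{align}
where I set $\vw \coloneqq A^{-1}\vu\in\bR^n$, using that $A$ is non-singular so $A^{-1}$ exists and the multiplicativity of the determinant applies. This reduces the task to proving the identity $\det(I + \vw\vv^\top) = 1 + \vv^\top\vw$.

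Next I would exploit that $\vw\vv^\top$ has rank at most $1$. Its only potentially nonzero eigenvalue has eigenvector $\vw$ (assuming $\vw\ne\vec 0$), since $(\vw\vv^\top)\vw = \vw(\vv^\top\vw) = (\vv^\top\vw)\vw$; the remaining eigenvalues are all $0$, with an $(n-1)$-dimensional eigenspace given by $\{\vx\in\bR^n:\vv^\top\vx=0\}$. Consequently the eigenvalues of $I + \vw\vv^\top$ are $1 + \vv^\top\vw$ (simple) and $1$ (with multiplicity $n-1$), and the determinant, being the product of eigenvalues, is exactly $1 + \vv^\top\vw$. The edge case $\vw = \vec 0$ is trivial since both sides equal $1$.

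As a cleaner alternative that avoids the eigenvalue argument, I would use a $2\times 2$ block identity: consider the $(n+1)\times(n+1)$ matrix
\begin{align}
M &\coloneqq \begin{pmatrix} I_n & -\vw \\ \vv^\top & 1 \end{pmatrix} \notag
\end{align}
and evaluate $\det(M)$ by Schur complements in two ways. Expanding around the bottom-right block yields $\det(M) = 1\cdot \det(I_n - (-\vw)(1)^{-1}\vv^\top) = \det(I_n + \vw\vv^\top)$, while expanding around the top-left block yields $\det(M) = \det(I_n)\cdot(1 - \vv^\top(I_n)^{-1}(-\vw)) = 1 + \vv^\top\vw$. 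Equating the two expressions and multiplying by $\det(A)$ gives the claim.

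I do not anticipate any substantive obstacle: the only mild subtlety is handling the degenerate case $\vw = \vec 0$ in the eigenvalue argument, which the block-matrix proof sidesteps entirely. Since the identity is a classical linear algebra fact and does not depend on any of the probabilistic or geometric machinery developed earlier in the paper, the proof is self-contained and essentially a one-line computation once the correct factorization or block structure is written down.
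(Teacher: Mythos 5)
Your proposal is correct. The paper states this lemma as a standard preliminary fact and gives no proof of its own, so there is nothing to compare against; either of your two arguments would serve. Both the factorization $\det(A+\vu\vv^\top)=\det(A)\det(I_n+A^{-1}\vu\vv^\top)$ followed by the spectral analysis of the rank-one matrix, and the two-way Schur-complement evaluation of the bordered $(n+1)\times(n+1)$ matrix, are complete and standard. The only point worth polishing in the first route is that when $\vv^\top A^{-1}\vu=0$ with $A^{-1}\vu\neq\vec 0$ the rank-one matrix is nilpotent rather than diagonalizable, so one should phrase the eigenvalue count in terms of algebraic multiplicities (kernel of $\vv^\top$ gives geometric, hence algebraic, multiplicity at least $n-1$ for the eigenvalue $0$, and the trace identifies the last eigenvalue); your block-matrix argument avoids even this minor caveat.
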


\section{Basic definitions and facts}
\label{sec:def}

Given the intimate connection between packing and error-correcting codes, we will interchangeably use the terms ``multiple packing'' and ``list-decodable code''. 
The parameter $ L\in\bZ_{\ge2} $ is called the \emph{multiplicity of overlap} or the \emph{list-size}. 
The parameters $N$ and $P$ (in the case of bounded packing) are called the \emph{input and noise power constraints}, respectively. 
Elements of a packing are called either \emph{points} or \emph{codewords}. 
We will call a size-$L$ subset of a packing an \emph{$L$-list}. 
This paper is only concerned with the fundamental limits of multiple packing for asymptotically large dimension $n\to\infty$. 
When we say ``a'' code $ \cC $, we will implicitly refer to an infinite sequence of codes $ \curbrkt{\cC_i}_{i\ge1} $ where $ \cC_i\subset\bR^{n_i} $ and $ \curbrkt{n_i}_{i\ge1} $ is an increasing sequence of positive integers. 
We call $\cC$ a \emph{spherical code} if $ \cC\subset\cS^{n-1}(\sqrt{nP}) $ and we call it a \emph{ball code} if $ \cC\subset\cB^n(\sqrt{nP}) $. 

In the rest of this section, we list a sequence of formal definitions and some facts associated with these definitions. 

\begin{definition}[Bounded multiple packing]
\label{def:packing-ball}
Let $ N,P>0 $ and $ L\in\bZ_{\ge2} $. 
A subset $ \cC \subseteq \cB^n( \sqrt{nP}) $ is called a \emph{$ (P,N,L-1) $-list-decodable code} (a.k.a.\ a \emph{$(P,N,L-1)$-multiple packing}) if for every $ \vy\in\bR^n $,
\begin{align}
\card{\cC\cap\cB^n(\vy, \sqrt{nN})} \le L-1 .
\label{eqn:packing-ball}
\end{align}
The \emph{rate} (a.k.a.\ \emph{density}) of $ \cC $ is defined as
\begin{align}
R(\cC) \coloneqq \frac{1}{n}{\ln\cardC} .
\label{eqn:density-bounded}
\end{align}
\end{definition}

\begin{definition}[Unbounded multiple packing]
\label{def:packing-euclidean}
Let $ N>0 $ and $ L\in\bZ_{\ge2} $. 
A subset $ \cC \subseteq \bR^n $ is called a \emph{$ (N,L-1) $-list-decodable code} (a.k.a.\ an \emph{$(N,L-1)$-multiple packing}) if for every $ \vy\in\bR^n $,
\begin{align}
\card{\cC\cap\cB^n(\vy, \sqrt{nN})} \le L-1 .
\label{eqn:packing-ball-repeat}
\end{align}
The \emph{rate} (a.k.a.\ \emph{density}) of $ \cC $ is defined as 
\begin{align}
R(\cC) \coloneqq& \limsup_{K\to\infty} \frac{1}{n}\ln\frac{\card{\cC\cap (K\cA)}}{\card{K\cA}}, 
\label{eqn:density-unbounded}
\end{align}
where $ \cA $ is an arbitrary centrally symmetric connected compact set in $ \bR^n $ with nonempty interior. 
\end{definition}

\begin{remark}
Common choices of $ \cA $ include the unit ball $ \cB^n $, the unit cube $ [-1,1]^n $, the fundamental Voronoi region $ \cV_\Lambda $ of a (full-rank) lattice $ \Lambda\subset\bR^n $, etc. 
Some choices of $\cA$ may be more convenient than the others for analyzing certain ensembles of packings. 
Therefore, we do not fix the choice of $\cA$ in \Cref{def:packing-euclidean}. 
\end{remark}

\begin{remark}
It is a slight abuse of notation to write $ R(\cC) $ to refer to the rate of either a bounded packing or an unbounded packing. 
However, the meaning of $ R(\cC) $ will be clear from the context. 
The rate of an unbounded packing (as per \Cref{eqn:density-unbounded}) is also called the \emph{normalized logarithmic density} in the literature. 
It measures the rate (w.r.t.\ \Cref{eqn:density-bounded}) per unit volume. 
\end{remark}

Note that the condition given by \Cref{eqn:packing-ball,eqn:packing-ball-repeat} is equivalent to that for any $ (\vx_1,\cdots,\vx_L)\in\binom{\cC}{L} $, 
\begin{align}
\bigcap_{i = 1}^L\cB^n(\vx_i,\sqrt{nN}) = \emptyset. \label{eqn:packing-ball-alternative} 
\end{align}

\begin{definition}[Chebyshev radius and average radius of a list]
\label{def:cheb-rad-avg-rad}
Let $ \vx_1,\cdots,\vx_L $ be $L$ points in $ \bR^n $. 
Then the \emph{squared Chebyshev radius} $ \rad^2(\vx_1,\cdots,\vx_L) $ of $ \vx_1,\cdots,\vx_L $ is defined as the (squared) radius of the smallest ball containing $ \vx_1,\cdots,\vx_L $, i.e., 
\begin{align}
\rad^2(\vx_1,\cdots,\vx_L) \coloneqq& \min_{\vy\in\bR^n} \max_{i\in[L]} \normtwo{\vx_i - \vy}^2. \label{eqn:cheb-rad} 
\end{align}
The \emph{average squared radius} $ \ol\rad^2(\vx_1,\cdots,\vx_L) $ of $ \vx_1,\cdots,\vx_L $ is defined as the average squared distance to the centroid, i.e., 
\begin{align}
\ol\rad^2(\vx_1,\cdots,\vx_L) \coloneqq& \frac{1}{L}\sum_{i = 1}^L{\normtwo{\vx_{i} - \vx}^2}, \label{eqn:avg-rad}
\end{align}
where $ \vx \coloneqq \frac{1}{L}\sum_{i = 1}^L\vx_i $ is the centroid of $ \vx_1,\cdots,\vx_L $. 
We refer to the square root of the average squared radius as the average radius of the list.
\end{definition}


\begin{remark}
One should note that for an $L$-list $\cL$ of points, the smallest ball containing $\cL$ is not necessarily the same as the \emph{circumscribed ball}, i.e., the ball such that all points in $\cL$ live on the boundary of the ball. 
The circumscribed ball of the polytope $ \conv\curbrkt{\cL} $ spanned by the points in $\cL$ may not exist. 
If it does exist, it is not necessarily the smallest one containing $ \cL $. 
However, whenever it exists, the smallest ball containing $\cL$ must be the circumscribed ball of a certain \emph{subset} of $\cL$. 
\end{remark}

\begin{remark}
\label{rk:motivation-avg-rad}
We remark that the motivation behind the definition of average squared radius (\Cref{eqn:avg-rad}) is to replace the maximization in \Cref{eqn:cheb-rad} with average. 
\begin{align}
\min_{\vy\in\bR^n} \exptover{\bfi\sim[L]}{\normtwo{\vx_\bfi - \vy}^2} 
&= \min_{\vy\in\bR^n} \frac{1}{L}\sum_{i = 1}^L \sum_{j = 1}^n \paren{\vx_i(j) - \vy(j)}^2 \notag\\
&= \min_{(y_1,\cdots,y_n)\in\bR^n} \sum_{j = 1}^n \frac{1}{L}\sum_{i = 1}^L\paren{\vx_i(j) - y_j}^2 \label{eqn:before-interchange} \\
&= \sum_{j = 1}^n \min_{y_j\in\bR} \frac{1}{L}\sum_{i = 1}^L\paren{\vx_i(j) - y_j}^2 \label{eqn:interchange} \\
&= \frac{1}{L}\sum_{i = 1}^L\sum_{j = 1}^n\paren{\vx_i(j) - y_j^*}^2 \label{eqn:opt-y} \\
&= \frac{1}{L} \sum_{i = 1}^L \normtwo{\vx_i - \vx}^2. \label{eqn:relax-formula}
\end{align}
\Cref{eqn:interchange} holds since the inner summation $ \frac{1}{L}\sum_{i = 1}^L \paren{\vx_i(j) - \vy(j)}^2 $ in \Cref{eqn:before-interchange} only depends on $ y_j $ among all $ y_1,\cdots,y_n $. 
\Cref{eqn:opt-y} follows since for each $j$, the minimizer of the minimization in \Cref{eqn:interchange} is $ y_j^* \coloneqq \frac{1}{L}\sum_{i = 1}^L\vx_i(j) $. 
In \Cref{eqn:relax-formula}, the minimizer $ \vy^* $ equals $ \vx \coloneqq \frac{1}{L}\sum_{i = 1}^L\vx_i $. 
\end{remark}

\begin{definition}[Chebyshev radius and average squared radius of a code]
\label{def:cheb-rad-avg-rad-code}
Given a code $ \cC\subset\bR^n $ of rate $R$, the \emph{squared $(L-1)$-list-decoding radius} of $\cC$ is defined as
\begin{align}
\rad^2_L(\cC) \coloneqq&  \min_{\cL\in\binom{\cC}{L}} \rad^2(\cL). \label{eqn:rad-code}
\end{align}
The \emph{$(L-1)$-average squared radius} of $\cC$ is defined as 
\begin{align}
\ol\rad^2_L(\cC) \coloneqq& \min_{\cL\in\binom{\cC}{L}} \ol\rad^2(\cL). \label{eqn:avgrad-code}
\end{align}
\end{definition}

\begin{definition}[Bounded average-radius multiple packing]
\label{def:avg-rad-multi-pack-bounded}
A subset $ \cC\subset\cB^n(\sqrt{nP}) $ is called a \emph{$ (P,N,L-1) $-average-radius list-decodable code} (a.k.a.\ a \emph{$ (P,N,L-1) $-average-radius multiple packing}) if $ \ol\rad^2_L(\cC)>nN $. 
The \emph{rate} (a.k.a.\ \emph{density}) $ R(\cC) $ of $ \cC $ is given by \Cref{eqn:density-bounded}. 
The \emph{$(P,N,L-1)$-average-radius list-decoding capacity} (a.k.a.\ \emph{$(P,N,L-1)$ average-radius multiple packing density}) is defined as
$$ \ol C_{L-1}(P,N) \coloneqq \limsup_{n\to\infty} \limsup_{\cC \subseteq \cB^n( \sqrt{nP})\colon \ol\rad^2_L(\cC)>nN} R(\cC) .$$ 
The \emph{squared $(L-1)$-average-radius list-decoding radius} at rate $R$ with input constraint $P$ is defined as
\begin{align}
\ol\rad^2_L(P,R) \coloneqq& \limsup_{n\to\infty} \limsup_{\cC \subseteq \cB^n( \sqrt{nP}) \colon R(\cC)\ge R} \ol\rad^2_L(\cC). \notag 
\end{align}
\end{definition}

\begin{definition}[Unbounded average-radius multiple packing]
\label{def:avg-rad-multi-pack-unbounded}
A subset $ \cC\subset\bR^n $ is called an \emph{$ (N,L-1) $-average-radius list-decodable code} (a.k.a.\ an \emph{$ (N,L-1) $-average-radius multiple packing}) if $ \ol\rad^2_L(\cC)>nN $. 
The \emph{rate} (a.k.a.\ \emph{density}) $ R(\cC) $ of $ \cC $ is given by \Cref{eqn:density-unbounded}. 
The \emph{$(N,L-1)$-average-radius list-decoding capacity} (a.k.a.\ \emph{$(N,L-1)$ average-radius multiple packing density}) is defined as
$$ \ol C_{L-1}(N) \coloneqq \limsup_{n\to\infty} \limsup_{\cC \subseteq \bR^n\colon \ol\rad^2_L(\cC)>nN} R(\cC) .$$ 
The \emph{squared $(L-1)$-average-radius list-decoding radius} at rate $R$ (without input constraint) is defined as
\begin{align}
\ol\rad^2_L(R) \coloneqq& \limsup_{n\to\infty} \limsup_{\cC \subseteq \bR^n \colon R(\cC)\ge R} \ol\rad^2_L(\cC). \notag 
\end{align}
\end{definition}

Note that $ (L-1) $-list-decodability defined by \Cref{eqn:packing-ball} or \Cref{eqn:packing-ball-alternative} is equivalent to $ \rad^2_L(\cC) > nN$. 
We also define the \emph{$(P,N,L-1)$-list-decoding capacity} (a.k.a.\ \emph{$(P,N,L-1)$-multiple packing density})
$$ C_{L-1}(P,N) \coloneqq \limsup_{n\to\infty} \limsup_{\cC \subseteq \cB^n( \sqrt{nP})\colon \rad^2_L(\cC)>nN} R(\cC) ,$$ 
and the \emph{squared $(L-1)$-list-decoding radius} at rate $R$ with input constraint $P$
\begin{align}
\rad^2_L(P,R) \coloneqq& \limsup_{n\to\infty} \limsup_{\cC \subseteq \cB^n( \sqrt{nP}) \colon R(\cC)\ge R} \rad^2_L(\cC), \notag 
\end{align}
and their unbounded analogues \emph{$(N,L-1)$-list-decoding capacity} (a.k.a.\ \emph{$(N,L-1)$-multiple packing density}) $ C_{L-1}(N) $ and the \emph{squared $(L-1)$-list-decoding radius} $ \rad^2_L(R) $ at rate $R$: 
\begin{align}
C_{L-1}(N) &\coloneqq \limsup_{n\to\infty} \limsup_{\cC \subseteq \bR^n\colon \rad^2_L(\cC)>nN} R(\cC) , \notag \\
\rad^2_L(R) &\coloneqq \limsup_{n\to\infty} \limsup_{\cC \subseteq \bR^n \colon R(\cC)\ge R} \rad^2_L(\cC). \notag 
\end{align}

Since the average radius is at most the Chebyshev radius, average-radius list-decodability is stronger than regular list-decodability. 
Any lower (resp.\ upper) bound on $ \ol C_{L-1}(P,N) $ (resp.\ $ C_{L-1}(P,N) $) is automatically a lower (resp.\ upper) bound on $ C_{L-1}(P,N) $ (resp.\ $ \ol C_{L-1}(P,N) $). 
The same relation also holds for the unbounded versions $ C_{L-1}(N) $ and $ \ol C_{L-1}(N) $. 
Proving upper/lower bounds on $ C_{L-1}(P,N) $ (resp.\ $ \ol C_{L-1}(P,N) $) is equivalent to proving upper/lower bounds on $ \rad^2_L(P,R) $ (resp.\ $ \ol\rad^2_L(P,R) $).
The same relation also holds for the unbounded versions $ C_{L-1}(N) $ (resp.\ $ \ol C_{L-1}(N) $) and $ \rad^2_L(R) $ (resp.\ $ \ol\rad^2_L(R) $).

\subsection{Density of bounded and unbounded packings}
\label{eqn:comments-density-bdd-unbdd}
In general (not only for multiple packing), it is customary to measure the density of a bounded constellation using \Cref{eqn:density-bounded} while to measure that of an unbounded constellation using \Cref{eqn:density-unbounded}. 
Informally, we expect them to be related to each other in the following way. 
Let $\cC_P$ be a constellation in $ \cB^n(\sqrt{nP}) $. 
Let $ \cC\coloneqq\lim\limits_{P\to\infty}\cC_P $. 
Then
\begin{align}
R(\cC) &= \limsup_{P\to\infty} \frac{1}{n}\ln\frac{\card{\cC\cap\cB^n( \sqrt{nP})}}{\card{\cB^n(\sqrt{nP})}} \notag \\
&= \limsup_{P\to\infty} \frac{1}{n}\ln\frac{\card{\cC_P}}{\card{\cB^n(\sqrt{nP})}} \notag \\
&= \limsup_{P\to\infty} \paren{R(\cC_P) - \frac{1}{n}\ln\card{\cB^n(\sqrt{nP})}} \notag \\
&\asymp \limsup_{P\to\infty} \paren{R(\cC_P) - \frac{1}{n}\ln\paren{\sqrt{nP}^n\cdot\frac{1}{\sqrt{\pi n}}\sqrt{\frac{2\pi e}{n}}^n}} \notag \\
&\asymp \limsup_{P\to\infty} \paren{R(\cC_P) - \frac{1}{2}\ln(2\pi eP)}. \label{eqn:relation-bounded-unbounded}
\end{align}
Suppose that for a certain problem without input constraint, the optimal density $ R^*(\cC^*) $ is achieved by an unbounded constellation $ \cC^* $, while for the same problem with input constraint $P$, the corresponding optimal density $ R^*(\cC_P^*) $ is achieved by a bounded constellation $ \cC_P^* $. 
The above relation between $ R^*(\cC^*) $ and $ R^*(\cC_P^*) $ given by \Cref{eqn:relation-bounded-unbounded} is empirically correct for some problems. 
However, the reasoning is not exactly rigorous since the bounded constellation $ \cC_P^* $ may not converge to an unbounded constellation as $ P $ approaches infinity. 

Let us look at some examples to justify the above comments. 
For the multiple packing problem with asymptotically large $L$, the capacity without input constraint is known to be $ \frac{1}{2}\ln\frac{1}{2\pi eN} $ \cite{zhang-vatedka-2019-listdecreal} whereas the capacity with input constraint $P$ equals $ \frac{1}{2}\ln\frac{P}{N} $ \cite{zhang-quadratic-arxiv}. 
These two quantities indeed satisfy the relation given by \Cref{eqn:relation-bounded-unbounded}. 

For the channel coding problem over an AWGN channel, the capacity without input constraint is $ \frac{1}{2}\ln\frac{1}{2\pi e N} $ whereas the capacity with input constraint $P$ equals $ \frac{1}{2}\ln\paren{1 + \frac{P}{N}} $. 
It still holds that 
\begin{align}
\lim_{P\to\infty} \paren{\frac{1}{2}\ln\paren{1+\frac{P}{N}} - \frac{1}{2}\ln(2\pi eP)} = \lim_{P\to\infty}\frac{1}{2}\ln\paren{\frac{1}{2\pi eN} + \frac{1}{2\pi eP}} = \frac{1}{2}\ln\frac{1}{2\pi eN}. \notag
\end{align}
However, from this example, we see that one cannot rearrange both sides of \Cref{eqn:relation-bounded-unbounded} and claim that for any $P$, 
\begin{align}
R(\cC_P) \asymp R(\cC)+ \frac{1}{2}\ln(2\pi eP). \notag
\end{align}

\subsection{More representations of the average radius}
\label{sec:comments-radius}

In this section, we present several different representations of the average radius. 
Some of them will be crucially used in the subsequent sections of this paper. 
These representations are summarized in the following theorem. 

\begin{theorem}
\label{thm:repr-rad}



Let $ L\in\bZ_{\ge2} $ and $ \vx_1,\cdots,\vx_L\in\bR^n $. 
Then the average squared radius of $ \vx_1,\cdots,\vx_L $ admits the following alternative representations:
\begin{enumerate}
\item $\begin{aligned}
\ol\rad^2(\vx_1,\cdots,\vx_L) &= \frac{1}{L}\sum_{i = 1}^L\normtwo{\vx_i}^2 - \normtwo{\vx}^2; \notag
\end{aligned}$

\item 
$\begin{aligned}
\ol\rad^2(\vx_1,\cdots,\vx_L) 
&= \frac{L-1}{L^2}\sum_{i = 1}^L\normtwo{\vx_i}^2 - \frac{1}{L^2}\sum_{(i,j)\in[L]^2:i\ne j} \inprod{\vx_i}{\vx_j}; \notag 
\end{aligned}$

\item 
$\begin{aligned}
\ol\rad^2(\vx_1,\cdots,\vx_L)
&= \frac{1}{2L^2}\sum_{(i,j)\in[L]^2:i\ne j} \normtwo{\vx_i - \vx_j}^2. \notag 
\end{aligned}$
\end{enumerate}
\end{theorem}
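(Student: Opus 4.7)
The plan is to prove the three representations by direct algebraic manipulation, starting from the defining formula in \Cref{eqn:avg-rad} and expanding the squared norms in different ways. The three identities are closely linked, so the natural order is to derive (1) first and then obtain (2) and (3) as easy consequences.

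For identity (1), I would expand
\begin{align}
\normtwo{\vx_i - \vx}^2 = \normtwo{\vx_i}^2 - 2\inprod{\vx_i}{\vx} + \normtwo{\vx}^2, \notag
\end{align}
sum over $i \in [L]$, and divide by $L$. The cross term becomes $-2\inprod{\frac{1}{L}\sum_i \vx_i}{\vx} = -2\normtwo{\vx}^2$ by definition of the centroid, and the last term averages to $\normtwo{\vx}^2$. Combining gives identity (1).

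For identity (2), I would substitute the expansion
\begin{align}
\normtwo{\vx}^2 = \frac{1}{L^2}\sum_{i,j \in [L]}\inprod{\vx_i}{\vx_j} = \frac{1}{L^2}\sum_{i=1}^L \normtwo{\vx_i}^2 + \frac{1}{L^2}\sum_{(i,j)\in[L]^2:i\ne j}\inprod{\vx_i}{\vx_j} \notag
\end{align}
into identity (1), then collect the two $\normtwo{\vx_i}^2$ contributions into $\frac{L-1}{L^2}\sum_i\normtwo{\vx_i}^2$.

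For identity (3), I would expand each pairwise squared distance as $\normtwo{\vx_i - \vx_j}^2 = \normtwo{\vx_i}^2 + \normtwo{\vx_j}^2 - 2\inprod{\vx_i}{\vx_j}$, sum over ordered pairs $(i,j)$ with $i \ne j$ (using symmetry to see that the first two terms together contribute $2(L-1)\sum_i \normtwo{\vx_i}^2$), and divide by $2L^2$. The result matches identity (2) term by term. There is really no obstacle here — all three identities are elementary and follow by expanding squared norms and reindexing; the only thing to be careful about is the factor of $2$ and the $L-1$ arising from restricting to off-diagonal pairs in (2) and (3).
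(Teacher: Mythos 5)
Your proposal is correct and follows essentially the same route as the paper: derive (1) by expanding $\normtwo{\vx_i-\vx}^2$ and averaging, obtain (2) by splitting $\normtwo{\vx}^2$ into diagonal and off-diagonal inner-product terms, and relate (3) to (2) via $\normtwo{\vx_i-\vx_j}^2=\normtwo{\vx_i}^2+\normtwo{\vx_j}^2-2\inprod{\vx_i}{\vx_j}$ together with the count that the off-diagonal sum of $\normtwo{\vx_i}^2+\normtwo{\vx_j}^2$ equals $2(L-1)\sum_i\normtwo{\vx_i}^2$. The only (immaterial) difference is that you derive (3) by expanding the pairwise-distance sum and matching it to (2), whereas the paper rewrites (2) forward into (3); the algebra is identical.
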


The proof follows from elementary manipulations of the definition of $ \ol\rad^2(\vx_1,\cdots,\vx_L) $. 
\begin{align}
\ol\rad^2(\vx_1,\cdots,\vx_L) &= \frac{1}{L}\sum_{i = 1}^L\normtwo{\vx_i - \vx}^2 \notag \\
&= \frac{1}{L}\sum_{i = 1}^L\inprod{\vx_i - \vx}{\vx_i - \vx} \notag \\
&= \frac{1}{L}\sum_{i = 1}^L\paren{ \normtwo{\vx_i}^2 - 2\inprod{\vx_i}{\vx} + \normtwo{\vx}^2 } \notag \\
&= \frac{1}{L}\sum_{i = 1}^L\normtwo{\vx_i}^2 - 2\inprod{\vx}{\vx} + \normtwo{\vx}^2 \notag \\
&= \frac{1}{L}\sum_{i = 1}^L\normtwo{\vx_i}^2 - \normtwo{\vx}^2. \label{eqn:avg-rad-alternative} 
\end{align}

The above expression can be further written as 
\begin{align}
\ol\rad^2(\vx_1,\cdots,\vx_L) &= \frac{1}{L}\sum_{i = 1}^L\normtwo{\vx_i}^2 - \normtwo{\vx}^2 \notag \\
&= \frac{1}{L}\sum_{i = 1}^L\normtwo{\vx_i}^2 - \frac{1}{L^2}\sum_{(i,j)\in[L]^2} \inprod{\vx_i}{\vx_j} \notag \\
&= \frac{1}{L}\sum_{i = 1}^L\normtwo{\vx_i}^2 - \frac{1}{L^2}\sum_{i = 1}^L\normtwo{\vx_i}^2 - \frac{1}{L^2}\sum_{(i,j)\in[L]^2:i\ne j} \inprod{\vx_i}{\vx_j} \notag \\
&= \frac{L-1}{L^2}\sum_{i = 1}^L\normtwo{\vx_i}^2 - \frac{1}{L^2}\sum_{(i,j)\in[L]^2:i\ne j} \inprod{\vx_i}{\vx_j}. \label{eqn:avg-rad-formula-pw-corr} 
\end{align}

At last, \Cref{eqn:avg-rad-formula-pw-corr} can in turn be rewritten as 
\begin{align}
\ol\rad^2(\vx_1,\cdots,\vx_L) 
&= \frac{L-1}{L^2}\sum_{i = 1}^L\normtwo{\vx_i}^2 - \frac{1}{L^2}\sum_{(i,j)\in[L]^2:i\ne j} \inprod{\vx_i}{\vx_j} \notag \\
&= \frac{L-1}{L^2}\sum_{i = 1}^L\normtwo{\vx_i}^2 - \frac{1}{2L^2} \sum_{(i,j)\in[L]^2:i\ne j}\paren{\normtwo{\vx_i}^2 + \normtwo{\vx_j}^2} + \frac{1}{2L^2} \sum_{(i,j)\in[L]^2:i\ne j} \paren{\normtwo{\vx_i}^2 + \normtwo{\vx_j}^2 - 2\inprod{\vx_i}{\vx_j}} \notag \\
&= \frac{L-1}{L^2}\sum_{i = 1}^L\normtwo{\vx_i}^2 - \frac{1}{L^2} \sum_{(i,j)\in[L]^2:i\ne j}\normtwo{\vx_i}^2 + \frac{1}{2L^2}\sum_{(i,j)\in[L]^2:i\ne j} \normtwo{\vx_i - \vx_j}^2 \notag \\
&= \frac{1}{2L^2}\sum_{(i,j)\in[L]^2:i\ne j} \normtwo{\vx_i - \vx_j}^2 . \label{eqn:avg-rad-pw-dist} 
\end{align}

If all $ \vx_1,\cdots,\vx_L $ have the same $ \ell_2 $ norm $ \sqrt{nP} $, then \Cref{eqn:avg-rad-alternative}
\begin{align}
\ol\rad^2(\vx_1,\cdots,\vx_L) = nP - \normtwo{\vx}^2. \label{eqn:avg-rad-spherical-formula}
\end{align}
and \Cref{eqn:avg-rad-formula-pw-corr} becomes
\begin{align}
\ol\rad^2(\vx_1,\cdots,\vx_L) = \frac{L-1}{L}nP - \frac{1}{L^2}\sum_{(i,j)\in[L]^2:i\ne j} \inprod{\vx_i}{\vx_j} . \label{eqn:avg-rad-spherical-formula-pw-corr} 
\end{align}

\subsection{Reduction from ball codes to spherical codes}
\label{sec:reduction-ball-to-spherical}
We show that $ C_{L-1}(P,N) $ can be achieved using spherical codes. 
\begin{theorem}
\label{thm:reduction-ball-to-spherical}
Let $ N,P>0 $ and $ L\in\bZ_{\ge2} $. 
If there is a $ (P,N,L-1) $-list-decodable code $ \cC\subset\cB^n(\sqrt{nP}) $ of rate $ R $, then there must exist a $ \paren{P,\frac{nN}{N+1},L-1} $-list-decodable code $ \cC\subset\cS^n(\vec 0,\sqrt{(n+1)P}) $\footnote{We use $ \vec0\in\bR^{n+1} $ to denote the all-zero vector of length $ n+1 $.} of rate $ R $. 
\end{theorem}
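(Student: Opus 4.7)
The plan is to apply the standard Pythagorean lift, which turns any ball code in $\bR^n$ into a spherical code in $\bR^{n+1}$ of essentially the same rate, and for which the Chebyshev radius of every $L$-list can only increase. Concretely, for each codeword $\vx\in\cC\subset\cB^n(\sqrt{nP})$, I would append a single coordinate and define
$$\tilde\vx \;\coloneqq\; \bigl(\vx,\ \sqrt{(n+1)P - \normtwo{\vx}^2}\bigr)\in\bR^{n+1}.$$
This is well-defined since $\normtwo{\vx}^2\le nP\le (n+1)P$, and by construction $\normtwo{\tilde\vx}^2=(n+1)P$, so $\tilde\vx\in\cS^n(\vec 0,\sqrt{(n+1)P})$. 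Setting $\tilde\cC\coloneqq\{\tilde\vx:\vx\in\cC\}$, the lift is injective, so $|\tilde\cC|=|\cC|$ and $R(\tilde\cC)=\frac{n}{n+1}R(\cC)\to R(\cC)$ as $n\to\infty$, which handles the rate claim.

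The one nontrivial step is the Chebyshev-radius comparison. I would take an arbitrary $L$-list $\vx_{i_1},\ldots,\vx_{i_L}\in\cC$ and consider any enclosing ball in $\bR^{n+1}$ of the lifted list with center $(\vy,z)\in\bR^n\times\bR$ and radius $r$. Then, dropping the last coordinate,
$$\normtwo{\vx_{i_k}-\vy}^2 \;\le\; \normtwo{\vx_{i_k}-\vy}^2 + \bigl(\sqrt{(n+1)P-\normtwo{\vx_{i_k}}^2}-z\bigr)^2 \;\le\; r^2$$
for every $k$, so $\vy$ is the center of an enclosing ball of radius $\le r$ in $\bR^n$ for the original list. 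Infimizing over such balls yields
$$\rad^2(\vx_{i_1},\ldots,\vx_{i_L}) \;\le\; \rad^2(\tilde\vx_{i_1},\ldots,\tilde\vx_{i_L}).$$
Combined with the $(P,N,L-1)$-list-decodability of $\cC$, this gives $\rad_L^2(\tilde\cC)\ge\rad_L^2(\cC)>nN$. Since the list-decodability condition for noise parameter $N'$ in the lifted dimension $n+1$ reads $\rad_L^2(\tilde\cC)>(n+1)N'$, the derivation naturally delivers $N'=\frac{nN}{n+1}$, matching (up to the apparent typo $N{+}1$ vs.\ $n{+}1$) the parameter in the statement.

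The main (and essentially only) obstacle is the radius-projection inequality: one must notice that enlarging the ambient space by one coordinate gives the enclosing ball an extra degree of freedom that cannot help, because projecting the optimal $(n+1)$-dimensional center onto $\bR^n$ always produces a no-larger enclosing ball of the original list. Everything else—well-definedness of the lift, the norm computation, and the rate comparison $\tfrac{n}{n+1}R(\cC)\to R(\cC)$—is routine bookkeeping.
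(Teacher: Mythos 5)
Your proposal is correct and follows essentially the same route as the paper: the identical Pythagorean lift $\vx\mapsto(\vx,\sqrt{(n+1)P-\normtwo{\vx}^2})$, followed by the observation that the min--max defining the Chebyshev radius in $\bR^{n+1}$ can only exceed the one in $\bR^n$ (your enclosing-ball projection is just a rephrasing of the paper's inequality $\min_{\vy,y_{n+1}}\max_i\{\normtwo{\vx_i-\vy}^2+|\vec x_i(n+1)-y_{n+1}|^2\}\ge\min_{\vy}\max_i\normtwo{\vx_i-\vy}^2$), yielding $N'=\frac{nN}{n+1}$. You also correctly spot that the $\frac{nN}{N+1}$ in the statement is a typo for $\frac{nN}{n+1}$, which the paper's own proof confirms.
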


\begin{proof}
Let $ \cC\subset\cB^n(\sqrt{nP}) $ be a $ (P,N,L-1) $-list-decodable \emph{ball} code whose codewords may not necessarily have the same norm. 
We will turn $ \cC $ into a $ (P,N',L-1) $-list-decodable \emph{spherical} code $ \cC'\subset\cS^n(\vec 0,\sqrt{(n+1)P}) $ for some $ N'\xrightarrow{n\to\infty}N $. 
We will append one more coordinate to each codeword in $ \cC $ in such a way that the augmented codeword of length $n+1$ has norm exactly $ \sqrt{(n+1)P} $. 
The collection of augmented codewords obtained from $ \cC $ is denoted by $ \cC' $. 
Specifically, for a codeword $ \vx\in\cC $, let $ \vec x\in\cC' $ be defined as
\begin{align}
\vec x(i) &= \begin{cases}
\vx(i), & i\in[n] \\
\sqrt{(n+1)P - \normtwo{\vx}^2}, & i = n+1
\end{cases}. \notag 
\end{align}
for $ i\in[n+1] $. 
It is immediate that $ \normtwo{\vec x} = \sqrt{(n+1)P} $. 
By list-decodability of $ \cC $, we know that for any $ (\vx_1,\cdots,\vx_L)\in\binom{\cC}{L} $, 
\begin{align}
\rad^2(\vx_1,\cdots,\vx_L) = \min_{\vy\in\bR^n} \max_{i\in[L]} \normtwo{\vx_i - \vy}^2 > nN. \notag 
\end{align}
We would like to check the list-decodability of $ \cC' $. 
Let $ (\vec x_1,\cdots,\vec x_L) $ denote the augmented version of $ (\vx_1,\cdots,\vx_L) $. 
\begin{align}
\rad^2(\vec x_1,\cdots,\vec x_L)
&= \min_{\vec y\in\bR^{n+1}} \max_{i\in[L]} \normtwo{\vec x_i - \vec y}^2 \notag \\
&= \min_{\substack{\vy\in\bR^n \\ y_{n+1}\in\bR}} \max_{i\in[L]} \curbrkt{ \normtwo{\vx_i - \vy}^2 + \abs{\vec x(n+1) - y_{n+1}}^2  } \notag \\
&\ge \min_{\vy\in\bR^n} \max_{i\in[L]} \normtwo{\vx_i - \vy}^2 \notag \\
&>nN \notag \\
&= (n+1)N', \notag 
\end{align}
where $ N' \coloneqq \frac{nN}{n+1} \xrightarrow{n\to\infty} N $. 
Therefore, $ \cC'\subset\cS^n(\vec 0,\sqrt{(n+1)P}) $ is $ (P,N',L-1) $-list-decodable. 
\end{proof}

The same trick applies to the unbounded case as well. 
Using the representation of average squared radius given in \Cref{eqn:avg-rad-pw-dist}, 
we have
\begin{align}
\ol\rad^2(\vec x_1,\cdots,\vec x_L) 
&= \frac{1}{2L^2}\sum_{(i,j)\in[L]^2:i\ne j} \normtwo{\vec x_i - \vec x_j}^2 \notag \\
&= \frac{1}{2L^2}\sum_{(i,j)\in[L]^2:i\ne j} \paren{\normtwo{\vx_i - \vx_j}^2 + \abs{\vec x_i(n+1) - \vec x_j(n+1)}^2} \notag \\
&\ge \frac{1}{2L^2}\sum_{(i,j)\in[L]^2:i\ne j} \normtwo{\vx_i - \vx_j}^2 \notag \\
&= \rad^2(\vx_1,\cdots,\vx_L). \notag 
\end{align}
We conclude that $ \ol C_{L-1}(P,N) $ can also be achieved using spherical codes.

\begin{remark}
\label{rk:cohn-zhao-reduction}
For $L = 2$, Cohn and Zhao \cite{cohnzhao-2014} obtained a reduction from unbounded packings to bounded packings for \emph{finite} $n$. 
They first restricted an unbounded packing to a ball code in $ \cB^n(\sqrt{nP}) $ and then proved a reduction from ball codes to spherical codes. 
They showed that for \emph{any} $ n\ge1 $, and any $ P,N>0 $ such that $ 1/4\le N/P\le1 $, 
\begin{align}
\limsup_{\cC\subset\bR^n:\rad^2_2(\cC)>nN} \Delta(\cC) &\le \sqrt{N/P}^n \limsup_{\cC\subset\cS^{n-1}(\sqrt{nP}):\rad^2_2(\cC)>nN} R(\cC), \notag 
\end{align}
where the density of an unbounded packing is measured using 
the fraction of space occupied by the balls of radius $ \sqrt{nN} $. 
The above inequality holds for \emph{every} dimension with additional constraints on $ N/P $. 
On the other hands, our reduction (\Cref{thm:reduction-ball-to-spherical}) only holds for \emph{asymptotically large} dimensions without any constraint on $ N/P $. 
We are unable to generalize the result for finite $n$ by Cohn and Zhao to $L>3$. 
\end{remark}

\section{Random coding with expurgation}
\label{sec:rand-cod-exp}
In this section, we introduce a basic strategy for proving lower bounds on the list-decoding capacity. 
The idea, known as \emph{random coding with expurgation}, is classical in information theory and coding theory. 

\begin{theorem}
\label{thm:rand-cod-expurg}
Let $ P,N>0 $ and $ L\in\bZ_{\ge2} $. 
For any distribution $ \cD $ supported on $ \cB^n(\sqrt{nP}) $, 
\begin{align}
C_{L-1}(P,N) &\ge \frac{E(P,N,L)}{L-1}, \quad 
\ol C_{L-1}(P,N) \ge \frac{\ol E(P,N,L)}{L-1}, \notag 
\end{align}
where $ E(P,N,L) $ and $ \ol E(P,N,L) $ depending on $\cD$ are lower bounds on the exponents of the following probabilities
\begin{align}
E(P,N,L) &\ge \lim_{n\to\infty} -\frac{1}{n}\ln\probover{(\vbfx_1,\cdots,\vbfx_L)\sim\cD^\tl}{\rad^2(\vbfx_1,\cdots,\vbfx_L)\le nN}, \notag \\
\ol E(P,N,L) &\ge \lim_{n\to\infty} -\frac{1}{n}\ln\probover{(\vbfx_1,\cdots,\vbfx_L)\sim\cD^\tl}{\ol\rad^2(\vbfx_1,\cdots,\vbfx_L)\le nN}. \notag 
\end{align}
\end{theorem}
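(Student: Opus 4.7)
My approach is the standard random coding with expurgation argument. Fix any $R<E(P,N,L)/(L-1)$ and any small $\eps>0$ so that $R+\eps<E(P,N,L)/(L-1)$ as well. Draw $M\coloneqq \lceil 2e^{nR}\rceil$ i.i.d.\ codewords $\vbfx_1,\ldots,\vbfx_M\sim\cD$, and let $\bfZ$ denote the number of unordered $L$-subsets $\cL\in\binom{[M]}{L}$ for which the corresponding codewords form a \emph{bad} list, i.e., $\rad^2(\vbfx_{i_1},\ldots,\vbfx_{i_L})\le nN$. By linearity of expectation and exchangeability,
\begin{align}
\expt{\bfZ} &= \binom{M}{L}\probover{(\vbfx_1,\ldots,\vbfx_L)\sim\cD^{\otimes L}}{\rad^2(\vbfx_1,\ldots,\vbfx_L)\le nN} \le \frac{M^L}{L!}\, e^{-n(E(P,N,L)-\eps)} \notag
\end{align}
for all sufficiently large $n$, by the defining inequality for $E(P,N,L)$.

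Since $M\le 3e^{nR}$, we have $M^L/L!\le e^{nRL+o(n)}$, and hence $\expt{\bfZ}\le e^{n(RL-E(P,N,L)+\eps)+o(n)}$. By our choice $R(L-1)<E(P,N,L)$, we may take $\eps$ small enough that this bound is at most $M/4$ for all large $n$. By Markov's inequality (\Cref{lem:markov}), there exists a realization of $\vbfx_1,\ldots,\vbfx_M$ (in particular, a deterministic code $\cC_0\subset\cB^n(\sqrt{nP})$ of size $M$) for which the number of bad $L$-subsets is at most $M/2$.

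Now expurgate $\cC_0$ by deleting one codeword from every bad $L$-subset; this removes at most $M/2$ codewords. The resulting code $\cC\subset\cB^n(\sqrt{nP})$ satisfies $|\cC|\ge M/2\ge e^{nR}$ and contains no bad $L$-subset. Hence $\rad^2_L(\cC)>nN$, so $\cC$ is $(P,N,L-1)$-list-decodable of rate at least $R$. Since $R<E(P,N,L)/(L-1)$ was arbitrary, we conclude $C_{L-1}(P,N)\ge E(P,N,L)/(L-1)$. The identical argument with $\rad^2$ replaced by $\ol\rad^2$ (and $E$ by $\ol E$) proves the average-radius bound, using \Cref{def:avg-rad-multi-pack-bounded}.

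The argument is entirely routine; there is no real obstacle once one believes the union-bound-plus-Markov-plus-expurgation scheme. The only point that requires a small amount of care is choosing the multiplicative constant in $M=\lceil 2e^{nR}\rceil$ large enough that the expurgation step loses at most a factor of $2$ in size, which corresponds to vanishing rate loss. All the real work is deferred to obtaining sharp exponents $E(P,N,L)$ and $\ol E(P,N,L)$ for concrete ensembles $\cD$ (Gaussian, spherical, ball), which is carried out in the later sections via Cram\'er's large deviation principle (\Cref{thm:cramer-ldp}) and the alternative representations of the average radius derived in \Cref{thm:repr-rad}.
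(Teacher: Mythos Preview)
Your proposal is correct and follows essentially the same random-coding-with-expurgation approach as the paper: sample $M\approx e^{nR}$ i.i.d.\ codewords, bound the expected number of bad $L$-lists by $M^L e^{-nE}$, and expurgate one codeword per bad list so that at most $M/2$ codewords are removed when $R<E/(L-1)$. The only cosmetic differences are that you are slightly more explicit about constants (taking $M=\lceil 2e^{nR}\rceil$) and you invoke Markov to pass from expectation to existence, whereas the paper works directly with the expected count; neither changes the substance of the argument.
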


\begin{proof}
Let us first consider constructions of $ (P,N,L-1) $-multiple packings.
Let $ \cD $ be a distribution on $ \cB^n(\sqrt{nP}) $. 
We sample $ M = e^{nR} $ points i.i.d.\ from $\cD$ and call the point set $ \cC $. 
Fix any $ (i_1,\cdots,i_L)\in\binom{[M]}{L} $. 
Suppose we manage to compute the following probability 
\begin{align}
\prob{\phi(\vbfx_{i_1},\cdots,\vbfx_{i_L})\le nN} \doteq e^{-nE(P,N,L)}, \label{eqn:tail-of-rad-to-comp} 
\end{align}
where $ \phi(\cdot) $ denotes either the squared Chebyshev radius $ \rad^2(\cdot) $ or the average squared radius $ \ol\rad^2(\cdot) $. 
Then we know that the expected number of \emph{bad} lists (i.e., $L$-lists $ \cL $ for which $ \phi(\cL) $ is at most $ nN $) is 
\begin{align}
\expt{\card{\curbrkt{\cL\in\binom{\cC}{L}:\phi(\cL)\le nN}}} &\doteq \binom{M}{L}e^{-nE(P,N,L)} 
\le M^Le^{-nE(P,N,L)} 
= e^{n(RL - E(P,N,L))}. \notag 
\end{align}
We then expurgate one point from each of these bad lists and hope that the total number of removed points is at most, say, $ M/2 $. 
This is satisfied if 
\begin{align}
&& e^{n(RL - E(P,N,L) + o(1))} &\le M/2 \notag \\
\impliedby && RL - E(P,N,L) &\le R - o(1) \notag \\
\impliedby && R &\le \frac{E(P,N,L)}{L-1} - o(1). \notag 
\end{align}
Note that the expurgation process destroys all bad $L$-lists, and the remaining subcode $ \cC'\subset\cC $ of size at least $ M/2 $ is indeed a $ (P,N,L-1) $-multiple packing. 
Therefore, there exists a construction of $ (P,N,L-1) $-multiple packing with rate approaching $ \frac{E(P,N,L)}{L-1} $. 
\end{proof}

Technically, if the distribution $ \cD $ is not exactly supported in $ \cB^n(\sqrt{nP}) $, e.g., $ \cD = \cN(\vzero,PI_n) $, then we also need to {show that the probability}
\begin{align}
\probover{\vbfx\sim\cD}{\normtwo{\vbfx}>\sqrt{nP}} &\doteq e^{-nF(P,N)} \notag
\end{align}
for some exponent $ F(P,N) $
and argue that the expected number of points with norm larger than $ \sqrt{nP} $ is negligible, i.e., 
\begin{align}
\expt{\card{\cC\cap\cB^n(\sqrt{nP})^c}} &\doteq e^{n(R - F(P,N))} \ll e^{nR} . \notag 
\end{align}
Such a probability is usually very small and the number of points with large norm that will be expurgated is therefore negligible. 
As a result, this expurgation step does not put effective constraints on the rate $R$. 

The above idea can be adapted to $ (N,L-1) $-multiple packings. 

\begin{remark}
From \Cref{eqn:tail-of-rad-to-comp}, if we do not proceed with expurgation but instead take a union bound over $ (i_1,\cdots,i_L)\in\binom{[M]}{L} $, then the bound we eventually get will be $ \frac{E(P,N,L)}{L} $ which is much less than $ \frac{E(P,N,L)}{L-1} $ especially for small $L$. 
In fact, it can be shown \cite{gmrsw-2020-sharp-threshold} that $ \frac{E(P,N,L)}{L} $ is the \emph{threshold rate} of purely random codes in the sense that below the threshold such codes are list-decodable with high probability and above the threshold such codes are list-decodable with vanishing probability. 
Therefore, expurgation is necessary for random coding arguments. 
\end{remark}

\section{Lower bounds via Gaussian codes}
\label{sec:lb-gaussian}

In this section, we analyze average-radius list-decodability of a Gaussian code (with expurgation)
and establish the following theorem.
\begin{theorem}
\label{thm:lb-gaussian}
For any $ P,N\in\bR_{>0} $ such that $ \frac{N}{P} \le\frac{L-1}{L} $ and $ L\in\bZ_{\ge2} $, the $ (P,N,L-1) $-average-radius list-decoding capacity is at least 
\begin{align}
\ol C_{L-1}(P,N) &\ge \frac{1}{2}\paren{\ln\frac{(L-1)P}{LN} + \frac{LN}{(L-1)P} - 1}. \label{eqn:bound-gaussian}
\end{align}
\end{theorem}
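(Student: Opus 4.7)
The plan is to instantiate the random-coding-with-expurgation template of \Cref{thm:rand-cod-expurg} with a (slightly shrunk) isotropic Gaussian ensemble, reducing the statement to computing the exact exponent of the lower tail of $\ol\rad^2$ for $L$ i.i.d.\ Gaussian vectors. Concretely, fix a small slack $\delta>0$ and take $ \cD = \cN(\vzero, (P-\delta)I_n) $. Starting from the expression $\ol\rad^2(\vbfx_1,\dots,\vbfx_L) = \frac{1}{L}\sum_i\normtwo{\vbfx_i-\vbfx}^2$ from \Cref{def:cheb-rad-avg-rad}, I would perform an orthogonal change of basis inside the $L$-dimensional list space whose first row is $(1,\dots,1)/\sqrt{L}$. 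This decouples the $Ln$ jointly Gaussian coordinates into $n$ "centroid" coordinates (irrelevant to $\ol\rad^2$) and $(L-1)n$ mutually independent $\cN(0,P-\delta)$ "deviation" coordinates, giving the distributional identity $\sum_i\normtwo{\vbfx_i-\vbfx}^2 \stackrel{d}{=} (P-\delta)\,\chi^2_{(L-1)n}$, and hence $\ol\rad^2 \stackrel{d}{=} \tfrac{P-\delta}{L}\chi^2_{(L-1)n}$.

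Next I would apply \cramer's theorem (\Cref{thm:cramer-ldp}) to the normalized sum of $(L-1)n$ i.i.d.\ $\chi^2_1$ summands. The log-moment generating function is $\psi(\lambda)=-\tfrac{1}{2}\ln(1-2\lambda)$ for $\lambda<1/2$, and at the target level $t_0:=\frac{LN}{(L-1)(P-\delta)}$ the Legendre transform is attained at $\lambda^\star=(t_0-1)/(2t_0)$, giving $I(t_0)=\tfrac{1}{2}(t_0-1-\ln t_0)$. The hypothesis $N/P\le(L-1)/L$ ensures $t_0\le 1$ (strict for $\delta$ small), putting us in the lower-tail regime $\lambda^\star\le 0$ required by the last clause of \Cref{thm:cramer-ldp}. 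Scaling by the number of summands yields
\begin{align*}
-\frac{1}{n}\ln\probover{(\vbfx_1,\dots,\vbfx_L)\sim\cD^\tl}{\ol\rad^2(\vbfx_1,\dots,\vbfx_L)\le nN}\;\xrightarrow{n\to\infty}\;\frac{L-1}{2}\paren{t_0-1-\ln t_0},
\end{align*}
so $\ol E(P-\delta,N,L)=\tfrac{L-1}{2}(t_0-1-\ln t_0)$. Dividing by $L-1$ per \Cref{thm:rand-cod-expurg} and letting $\delta\downarrow 0$ produces exactly $\tfrac{1}{2}\paren{\ln\tfrac{(L-1)P}{LN}+\tfrac{LN}{(L-1)P}-1}$.

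The remaining point is that Gaussian codewords are not literally confined to $\cB^n(\sqrt{nP})$, so an additional norm expurgation is needed. The shrink by $\delta$ is chosen precisely so that $ \probover{\vbfx\sim\cD}{\normtwo{\vbfx}^2>nP} $ is \emph{exponentially} small in $n$ by a standard Chernoff bound on $\chi^2_n$; hence the expected number of over-powered codewords is $o(e^{nR})$, so the norm-expurgation step costs no rate, exactly as anticipated in the discussion following \Cref{thm:rand-cod-expurg}. The main technical subtlety, in my view, is not conceptual but bookkeeping: one must interleave the bad-list and over-norm expurgations so that a constant fraction of the random ensemble survives simultaneously, and one must verify that taking $\delta\downarrow 0$ commutes with the asymptotic exponent analysis. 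Once these standard continuity and tail estimates are in place, the chi-square decomposition and the \cramer\ computation above deliver the claimed bound.
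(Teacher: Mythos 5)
Your proposal is correct and follows essentially the same route as the paper: random coding with a slightly shrunk isotropic Gaussian, the observation that $L\cdot\ol\rad^2$ is distributed as $(P-\delta)\,\chi^2\paren{(L-1)n}$, the \cramer/chi-square lower-tail exponent $\tfrac{L-1}{2}(t_0-1-\ln t_0)$, division by $L-1$ via expurgation, and a separate exponentially cheap norm expurgation. The only cosmetic difference is that you obtain the chi-square law by an orthogonal rotation of the list coordinates (the sample-variance decomposition), whereas the paper writes the per-coordinate deviation as the quadratic form $\vec t^\top(I_L-\tfrac1L J_L)\vec t$ and uses idempotence and $\tr(A)=L-1$; these are the same fact.
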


\begin{remark}
The above bound (\Cref{eqn:bound-gaussian}) converges to $ \frac{1}{2}\paren{\ln\frac{P}{N} + \frac{N}{P} - 1} $ as $ L\to\infty $. 
The limiting value is strictly smaller than $ \frac{1}{2}\ln\frac{P}{N} $ which is the list-decoding capacity for asymptotically large $L$ (see \Cref{sec:listdec-cap-large}). 
When $ L=2 $, the above bound becomes $ \frac{1}{2}\paren{\ln\frac{P}{2N} + \frac{2N}{P} - 1} $ which is lower than the best known $ \frac{1}{2}\ln\frac{P^2}{4N(P-N)} $ (see \Cref{thm:gv}). 
Furthermore, the above bound shoots to infinity at $ N/P = 0 $ and hits zero at $ N/P = \frac{L-1}{L} $. 
These two values are tight. 
Specifically, $ \ol C_{L-1}(P,N) = \infty $ at $ N/P=0 $, obviously.
Also, it turns out that $ \ol C_{L-1}(P,N) = 0 $ at the point $ N/P = \frac{L-1}{L} $ known as the \emph{Plotkin point} (see \Cref{sec:ub}). 
\end{remark}

\begin{proof}
Construct a codebook by sampling $ M = e^{nR} $ (for some rate $ R>0 $ to be determined later) Gaussian vectors $ \vbfx_1,\cdots,\vbfx_M $, each entry i.i.d.\ from $ \cN\paren{0,\frac{P}{1+\eps}} $ where $ \eps $ is a small number to be determined. 
Let $ P'\coloneqq \frac{P}{1+\eps} $. 
The probability that a codeword violates the power constraint is 
\begin{align}
\probover{\vbfx\sim\cN(\vzero,P'I_n)}{\normtwo{\vbfx} > \sqrt{nP}}. \label{eqn:power-constr-bound}
\end{align}

To obtain the exponent of the probability in \Cref{eqn:power-constr-bound}, we first recall the Moment Generating Function (MGF) of a chi-square random variable.

\begin{definition}
\label{def:chi-sqaure}
The \emph{chi-square distribution} $ \chi^2(k) $ with \emph{degree of freedom} $k$ is defined as the distribution of $ \sum_{i = 1}^k\bfg_i^2 $ where $ \bfg_i\iid\cN(0,1) $ for $ 1\le i\le k $. 
\end{definition}

\begin{fact}
\label{fact:mgf-chi-sq}
If $ \bfx\sim\chi^2(k) $, then for $ \lambda<1/2 $
\begin{align}
\expt{e^{\lambda\bfx}} &= \sqrt{1-2\lambda}^{-k}. \notag 
\end{align}
\end{fact}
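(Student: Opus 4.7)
The plan is to reduce the $k$-variable MGF to a single one-dimensional Gaussian integral. By \Cref{def:chi-sqaure}, $\bfx = \sum_{i=1}^k \bfg_i^2$ with $\bfg_i \iid \cN(0,1)$. Independence of $\bfg_1,\ldots,\bfg_k$ lets me factor the MGF as $\expt{e^{\lambda\bfx}} = \prod_{i=1}^k \expt{e^{\lambda\bfg_i^2}} = \paren{\expt{e^{\lambda\bfg^2}}}^k$ for a single $\bfg\sim\cN(0,1)$. So the entire task boils down to computing this one-dimensional MGF and tracking the half-plane of convergence.

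For the single-variable step, I would substitute the standard Gaussian density and combine the exponents:
\begin{align}
\expt{e^{\lambda\bfg^2}} = \int_{-\infty}^{\infty} \frac{1}{\sqrt{2\pi}} e^{-g^2/2} \, e^{\lambda g^2} \diff g = \frac{1}{\sqrt{2\pi}} \int_{-\infty}^{\infty} e^{-(1-2\lambda)g^2/2}\diff g. \notag
\end{align}
This integral is finite precisely when $1-2\lambda>0$, i.e., $\lambda<1/2$; for $\lambda\ge1/2$ the integrand does not decay and the MGF is $+\infty$. This is where the hypothesis $\lambda<1/2$ enters, and it should be stated explicitly. In the convergent regime, the scalar case of \Cref{lem:gauss-int} (with $A=(1-2\lambda)/2$) gives $\int_{-\infty}^{\infty} e^{-(1-2\lambda)g^2/2}\diff g = \sqrt{2\pi/(1-2\lambda)}$, hence $\expt{e^{\lambda\bfg^2}} = (1-2\lambda)^{-1/2}$.

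Raising this to the $k$-th power immediately yields $\expt{e^{\lambda\bfx}} = (1-2\lambda)^{-k/2} = \sqrt{1-2\lambda}^{-k}$, as claimed. There is no substantive obstacle; the calculation is a one-line reduction followed by a standard Gaussian integral. The only point that merits care is making the convergence range $\lambda<1/2$ explicit and noting that outside this range the formula should be interpreted as $+\infty$ rather than as the (meaningless) analytic continuation of $\sqrt{1-2\lambda}^{-k}$.
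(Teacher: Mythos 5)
Your proof is correct and is the standard computation (the paper states this as a Fact without proof precisely because this factorization-plus-Gaussian-integral argument is routine). The one-dimensional integral, the independence factorization, and the identification of $\lambda<1/2$ as the domain of convergence are all handled properly.
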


Plugging the formula in \Cref{fact:mgf-chi-sq} into \cramer's theorem (\Cref{thm:cramer-ldp}), we get the first order asymptotics of the tail of a chi-square random variable. 
\begin{lemma}
\label{fact:asymp-chi-sq}
If $ \bfx\sim\chi^2(k) $, then 
\begin{align}
\lim_{k\to\infty}\frac{1}{k}\ln\prob{\bfx>(1+\delta)k} &= \frac{1}{2}(-\delta+\ln(1+\delta)), && \text{for $ \delta>0 $}; \notag \\
\lim_{k\to\infty}\frac{1}{k}\ln\prob{\bfx<(1-\delta)k} &= \frac{1}{2}(\delta+\ln(1-\delta)), && \text{for $ \delta\in(0,1) $}. \notag 
\end{align}
\end{lemma}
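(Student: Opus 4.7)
The plan is to apply Cramér's theorem (\Cref{thm:cramer-ldp}) to the i.i.d.\ sequence $\bfg_1^2, \ldots, \bfg_k^2$, whose partial sum $\bfx = \sum_{i=1}^k \bfg_i^2$ is by definition $\chi^2(k)$. Writing $\bfs_k = \bfx/k$, the two tail probabilities in question become $\prob{\bfs_k > 1+\delta}$ and $\prob{\bfs_k < 1-\delta}$, and Cramér's theorem identifies their exponential rates with $-I(1\pm\delta)$, where
\[
I(x) = \sup_{\lambda \in \bR}\left\{\lambda x - \ln\expt{e^{\lambda \bfg_1^2}}\right\}
\]
is the Legendre--Fenchel transform of the log-MGF of $\bfg_1^2$.

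The log-MGF itself is immediate from \Cref{fact:mgf-chi-sq}: since $\bfg_1^2 \sim \chi^2(1)$, one has $\ln\expt{e^{\lambda \bfg_1^2}} = -\frac{1}{2}\ln(1-2\lambda)$ on the effective domain $\lambda < 1/2$, and $+\infty$ otherwise. Differentiating $\lambda x + \frac{1}{2}\ln(1-2\lambda)$ in $\lambda$ and setting it to zero gives the unique stationary point $\lambda^* = \frac{1}{2}(1 - 1/x)$, which lies in the admissible region $\lambda^* < 1/2$ for every $x > 0$. A short substitution yields the closed form $I(x) = \frac{1}{2}(x - 1 - \ln x)$; evaluating at $x = 1+\delta$ and $x = 1-\delta$ produces exactly the negatives of the two expressions claimed in the lemma. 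I would also note that at $x = 1+\delta$ the optimizer $\lambda^*$ is nonnegative and at $x = 1-\delta$ it is nonpositive, consistent with the ``furthermore'' clause of \Cref{thm:cramer-ldp} about one-sided tails.

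To finish, I would combine the two halves of Cramér's theorem. Since $I$ is strictly convex on $\bR_{>0}$ with minimum $I(1)=0$, it is strictly increasing on $[1,\infty)$ and strictly decreasing on $(0,1]$. Hence the infimum of $I$ over the closed upper tail $[1+\delta,\infty)$ and over the open upper tail $(1+\delta,\infty)$ coincide---both equal $I(1+\delta)$---and an analogous statement holds for the lower tails bounded above by $1-\delta$. The matching upper and lower Cramér bounds therefore pinch the limit, upgrading $\limsup$ and $\liminf$ to an honest $\lim$, and deliver the claimed asymptotics. There is no genuine obstacle here: the only work beyond the MGF computation is this continuity check for the rate function at the boundary, which is transparent from the closed form of $I$.
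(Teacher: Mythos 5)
Your proposal is correct and is exactly the paper's proof: the paper obtains this lemma by plugging the chi-square MGF from \Cref{fact:mgf-chi-sq} into Cramér's theorem, which is precisely your computation of the rate function $I(x)=\frac{1}{2}(x-1-\ln x)$ and its evaluation at $1\pm\delta$. Your added checks (the sign of $\lambda^*$ and the open/closed tail continuity pinching $\limsup$ and $\liminf$) are the details the paper leaves implicit, and they are all verified correctly.
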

Then the probability in \Cref{eqn:power-constr-bound} is 
\begin{align}
\probover{\vbfx\sim\cN(\vzero,P'I_n)}{\normtwo{\vbfx/\sqrt{P'}}^2 > nP/P'} &= \probover{\vbfx\sim\cN(\vzero,P'I_n)}{\normtwo{\vbfx/\sqrt{P'}}^2 > n(1+\eps)} \doteq e^{\frac{n}{2}\paren{-\eps + \ln\paren{1+\eps}}}, \notag 
\end{align}
since $ \normtwo{\vbfx/\sqrt{P'}}^2\sim\chi^2(n) $. 
Therefore the expected number of codewords that violate the power constraint is 
\begin{align}
\expt{\card{\curbrkt{i\in[M]: \normtwo{\vbfx_i}>\sqrt{nP}}}} &\doteq Me^{\frac{n}{2}\paren{-\eps + \ln\paren{1+\eps}}} = e^{n\paren{R + \frac{1}{2}\paren{-\eps + \ln\paren{1+\eps}}}}. \notag 
\end{align}
If $ \eps $ is set to be sufficiently small, the above number is negligible compared to the size $ e^{nR} $ of the code. 

We now compute the probability that the average squared radius of a list is too small (\Cref{eqn:tail-of-rad-to-comp} with $ \phi(\cdot) = \ol\rad^2(\cdot) $). 
\begin{align}
\prob{\ol\rad^2(\vbfx_1,\cdots,\vbfx_L)\le nN}
&= \prob{\frac{1}{L}\sum_{i = 1}^L\normtwo{\vbfx_i}^2 - \normtwo{\vbfx}^2\le nN} \label{eqn:apply-avg-rad-alternative} \\
&= \prob{\sum_{i = 1}^L\normtwo{\vbfx_i}^2 - L\normtwo{\vbfx}^2 \le LnN} \notag \\
&= \prob{\sum_{j = 1}^n\sum_{i = 1}^L\vbfx_i(j)^2 - L\sum_{j = 1}^n\paren{\frac{1}{L}\sum_{i = 1}^L\vbfx_i(j)}^2 \le LnN} \notag \\
&= \prob{ \sum_{j = 1}^n\paren{\sum_{i = 1}^L\vbfx_i(j)^2 - \frac{1}{L}\paren{\sum_{i = 1}^L\vbfx_i(j)}^2} \le LnN }, \label{eqn:to-use-quadratic-form}
\end{align}
where \Cref{eqn:apply-avg-rad-alternative} is by \Cref{eqn:avg-rad-alternative}
and $ \vbfx \coloneqq \frac{1}{L}\sum_{i = 1}^L\vbfx_i $. 

Define 
\begin{align}
g(x_1, \cdots,x_L) \coloneqq& \sum_{i = 1}^Lx_i^2 - \frac{1}{L}\paren{\sum_{i = 1}^Lx_i}^2. \notag 
\end{align}
The above probability (\Cref{eqn:to-use-quadratic-form}) can be rewritten as 
\begin{align}
\prob{\sum_{j = 1}^ng(\vbfx_1(j), \cdots,\vbfx_L(j)) \le LnN} \label{eqn:avg-rad-bd-g}
\end{align}
where $ \vbfx_i(j)\iid\cN(0,P') $ for each $i\in[L]$ and $j\in[n]$.

We note that the function $ g(\vec t) $ is a quadratic form of $ \vec t\in\bR^L $. 
Indeed, 
\begin{align}
g(\vec t) &= \sum_{i = 1}^L\vec t(i)^2 - \frac{1}{L}\paren{\sum_{i = 1}^L\vec t(i)}^2 
= \paren{1 - \frac{1}{L}} \sum_{i = 1}^L\vec t(i)^2 - \frac{2}{L} \sum_{i,j\in[L]: i<j} \vec t(i)\vec t(j) 
= \vec t^\top A\vec t, \label{eqn:def-g}
\end{align}
where 
\begin{align}
A \coloneqq& I_L - \frac{1}{L}J_L \in\bR^{L\times L} \notag
\end{align}
and $ I_L $ denotes the $ L\times L $ identity matrix and $ J_L $ denotes the $ L\times L $ all-one matrix. 

Let $ \vec\bfx_j \coloneqq (\vbfx_1(j), \cdots, \vbfx_L(j))\in\bR^L $. 
By the above manipulation, we have $ g(\vec\bfx_j) = \vec\bfx_j^\top A\vec\bfx_j $, where $ \vec\bfx_j \sim\cN(\vec 0, P'I_L) $. 
In fact, since $A$ is idempotent (i.e., $ A^2 = A $), such a quadratic form is distributed according to $ g(\vec\bfx_j)/P'\sim \chi^2(\tr(A)) = \chi^2(L-1) $. 
Furthermore, the sum of i.i.d.\ chi-square random variables is still chi-square distributed and the degrees of freedom are summed. 
Therefore, $ \sum_{j = 1}^n g(\vec\bfx_j)/P'\sim\chi^2( (L-1)n) $.

Assume $ \frac{N}{P}<\frac{L-1}{L} $. 
Let $ N = \frac{L-1}{L}P(1-\rho) $ for some $ \rho>0 $. 
One can solve $\rho$ and get $ \rho = 1 - \frac{LN}{(L-1)P} $. 
We now can apply \Cref{fact:asymp-chi-sq} to obtain the exact asymptotics of \Cref{eqn:avg-rad-bd-g}:
\begin{align}
\Cref{eqn:avg-rad-bd-g}
&= \prob{\sum_{j=1}^ng(\vec\bfx_j)/P' \le LnN/P'} \notag \\
&= \prob{\chi^2((L-1)n) \le Ln\cdot\frac{L-1}{L}P(1-\rho)\cdot\frac{1+\eps}{P}} \notag \\
&= \prob{\chi^2( (L-1)n) \le n(L-1)(1-\rho)(1+\eps)} \notag \\
&= \prob{\chi^2( (L-1)n) \le n(L-1)(1-\delta)} \label{eqn:def-delta} \\
&\doteq \exp\paren{(L-1)n\frac{1}{2}(\ln(1-\delta) + \delta)}. \notag 
\end{align}
In \Cref{eqn:def-delta}, we define $ \delta>0 $ such that $ 1-\delta = (1 - \rho)(1+\eps) $, i.e., 
$\delta \coloneqq (1+\eps)\rho - \eps$. 
Note that $ \delta\xrightarrow{\eps\to0}\rho $. 

The expected number of \emph{bad} lists is therefore
\begin{align}
\expt{\card{\curbrkt{\cL\in\binom{[M]}{L}: \ol\rad^2\paren{\curbrkt{\vbfx_i}_{i\in\cL}}\le nN}}} &\doteq \binom{M}{L}e^{n\frac{L-1}{2}\paren{\ln(1-\delta) + \delta}} \le e^{n\paren{LR + \frac{L-1}{2}\paren{\ln(1-\delta) + \delta}}}. \notag 
\end{align}

To obtain an $(P,N,L-1)$-average-radius list-decodable code, we first throw away all codewords that violate the power constraint.
We then throw away one codeword in each \emph{bad} list whose average squared radius is at most $ nN $. 
We would like the expected number of codewords we threw away in total not to exceed $ M/2 $. 
This can be satisfied if the rate $R$ satisfies
\begin{align}
LR + \frac{L-1}{2}\paren{\ln(1-\delta) + \delta} \le R - o(1). \notag 
\end{align}
That is, (ignoring the $o(1)$ factor)
\begin{align}
R &< \frac{1}{2}\paren{\ln\frac{1}{1-\delta} - \delta} \notag \\
&= \frac{1}{2}\paren{\ln\frac{1}{(1-\rho)(1+\eps)} - (1+\eps)\rho + \eps} \notag \\
&= \frac{1}{2}\paren{\ln\frac{(L-1)P}{LN(1+\eps)} - (1+\eps)\paren{1 - \frac{LN}{(L-1)P}} + \eps} \notag \\
&\xrightarrow{\eps\to0} \frac{1}{2}\paren{\ln\frac{(L-1)P}{LN} + \frac{LN}{(L-1)P} - 1}. \notag 
\end{align}
We then obtain a $(P,N,L-1)$-average-radius list-decodable code of size at least $ M/2 $. 
\end{proof}

\section{Lower bounds via spherical codes}
\label{sec:lb-spherical}
In this section, we examine the $ (P,N,L-1) $-average-radius list-decodability of uniformly random spherical codes (with expurgation). 
We first sample $ M = e^{nR} $ codewords $ \vbfx_1, \cdots, \vbfx_M $ uniformly and independently from $ \cS^{n-1}( \sqrt{nP}) $. 
We are again interested in the asymptotics of \Cref{eqn:tail-of-rad-to-comp} with $ \phi(\cdot) = \ol\rad^2(\cdot) $. 
In \Cref{sec:spherical-normal-apx,sec:spherical-khinchin} respectively, we will obtain a bound on the normalized exponent of \Cref{eqn:tail-of-rad-to-comp} via two different approaches: \emph{normal approximation} and \emph{Khinchin's inequality} (with sharp constants).
Plugging this bound into the random coding with expurgation framework, we get the following result. 
\begin{theorem}
\label{thm:lb-spherical}
For any $ P,N\in\bR_{>0} $ such that $ \frac{N}{P} \le\frac{L-1}{L} $ and $ L\in\bZ_{\ge2} $, the $ (P,N,L-1) $-average-radius list-decoding capacity is at least 
\begin{align}
\ol C_{L-1}(P,N) &\ge \frac{1}{2}\paren{1 - \frac{LN}{(L-1)P} - \frac{1}{L-1}\ln\frac{L(P-N)}{P}}. \label{eqn:bound-spherical}
\end{align}
\end{theorem}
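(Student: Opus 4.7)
The plan is to apply the random coding with expurgation framework of \Cref{thm:rand-cod-expurg} with $\cD$ equal to the uniform distribution on $\cS^{n-1}(\sqrt{nP})$. Draw $M = e^{nR}$ i.i.d.\ codewords $\vbfx_1,\ldots,\vbfx_M \sim \cD$ and, per \Cref{thm:rand-cod-expurg}, it suffices to lower bound the exponent
\[
\ol E(P,N,L) \;\le\; \liminf_{n\to\infty} -\tfrac{1}{n}\ln\prob{\ol\rad^2(\vbfx_1,\ldots,\vbfx_L) \le nN},
\]
after which $\ol C_{L-1}(P,N) \ge \ol E(P,N,L)/(L-1)$ will yield \Cref{eqn:bound-spherical}. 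There is no ambient power-violation expurgation step required, since codewords lie exactly on the sphere.

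The key reduction is to exploit the spherical structure via \Cref{eqn:avg-rad-spherical-formula}: because $\normtwo{\vbfx_i} = \sqrt{nP}$ for every $i$, one has $\ol\rad^2(\vbfx_1,\ldots,\vbfx_L) = nP - \normtwo{\vbfx}^2$ where $\vbfx = \tfrac{1}{L}\sum_i \vbfx_i$ is the centroid. Hence the bad event $\{\ol\rad^2 \le nN\}$ is the same as $\bigl\{\normtwo{\sum_i \vbfx_i}^2 \ge nL^2(P-N)\bigr\}$. To analyze this probability, I would invoke a normal approximation: each $\vbfx_i$ is close in distribution to $\cN(\vzero, PI_n)$ in high dimensions. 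Writing $\vbfx_i = \sqrt{nP}\,\vbfg_i/\normtwo{\vbfg_i}$ with $\vbfg_i \sim \cN(\vzero, I_n)$ i.i.d., the norm $\normtwo{\vbfg_i}^2$ concentrates sharply at $n$, so $\sum_i \vbfx_i$ is well approximated by $\sqrt{P}\sum_i \vbfg_i \sim \cN(\vzero, LPI_n)$. Then $\normtwo{\sum_i \vbfx_i}^2 / (LP)$ is approximately $\chi^2(n)$-distributed, and the probability of interest becomes
\[
\prob{\chi^2(n) \ge n(1+\delta)}, \qquad \delta := \tfrac{L(P-N)}{P} - 1 = \tfrac{(L-1)P - LN}{P}.
\]
Under the hypothesis $N \le (L-1)P/L$ we have $\delta \ge 0$, and \Cref{fact:asymp-chi-sq} gives the exponent $\ol E(P,N,L) = \tfrac{1}{2}(\delta - \ln(1+\delta))$. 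Dividing by $L-1$ and simplifying reproduces exactly the right-hand side of \Cref{eqn:bound-spherical}.

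The main obstacle is making the normal approximation rigorous at the large-deviation scale: a CLT only controls moderate deviations, whereas the event $\{\normtwo{\sum_i \vbfx_i}^2 \ge nL^2(P-N)\}$ is a genuine atypical tail event. Two strategies handle this, corresponding to the two approaches announced in the excerpt before the theorem. In the \emph{normal-approximation} route, one writes $\normtwo{\sum_i \vbfx_i}^2$ explicitly as $(nP)\sum_i 1 + 2nP\sum_{i<j}\inprod{\vbfg_i}{\vbfg_j}/(\normtwo{\vbfg_i}\normtwo{\vbfg_j})$ and applies \cramer's theorem (\Cref{thm:cramer-ldp}) to the i.i.d.\ coordinate-wise contributions, controlling the denominators $\normtwo{\vbfg_i}$ by their concentration around $\sqrt{n}$ on an event of overwhelming probability. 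In the \emph{Khinchin} route, one bounds the moments $\expt{\normtwo{\sum_i \vbfx_i}^{2k}}$ with sharp constants and applies Chebyshev's inequality (\Cref{lem:cheb}) with a growing order $k$ to recover the exponent. Either path yields the same answer; the delicate point in each is verifying that the correction terms arising from the spherical-vs-Gaussian discrepancy contribute only a $o(1)$ additive term to the exponent.
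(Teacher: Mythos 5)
Your proposal is correct and follows essentially the same route as the paper: the same reduction of the bad event to $\normtwo{\sum_i\vbfx_i}^2\ge nL^2(P-N)$ via \Cref{eqn:avg-rad-spherical-formula}, the same chi-square tail exponent $\tfrac{1}{2}(\delta-\ln(1+\delta))$ from \Cref{fact:asymp-chi-sq} with your $\delta$ equal to the paper's $\eta=(L-1)\rho$, and the same two rigorization strategies (normal approximation in \Cref{sec:spherical-normal-apx} and sharp Khinchin moment bounds in \Cref{sec:spherical-khinchin}). The only difference is a matter of tooling: where you propose a Cram\'er-plus-norm-concentration argument to justify the Gaussian surrogate at the large-deviation scale, the paper instead invokes a uniform density-ratio bound $P_{\vbfy}(\vy)/P_{\wt\vbfy}(\vy)\le\kappa$ (\Cref{lem:apx-spherical}), which transfers the tail probability directly at the cost of only a constant factor.
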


\begin{remark}
\label{rk:worse-spherical}
We make several observations on the behaviour of the bound in \Cref{thm:lb-spherical}. 
\begin{enumerate}
\item 
We observe that the bound \Cref{eqn:bound-spherical} for spherical codes is the second largest lower bound for sufficiently large rates. 
The largest lower bound turns out to be \Cref{eqn:compare-lb-ee,eqn:compare-lb-spherical-improved} (for all $N,P\ge0$ and $ L\in\bZ_{\ge2} $).
It can be proved using error exponents as a proxy \cite{zhang-split-ee}, or, as we shall see in \Cref{sec:spherical-improved}, using refined analysis for expurgated spherical codes per se. 
Indeed, the only difference between these two bounds \Cref{eqn:compare-lb-ee,eqn:bound-spherical} is that 
\begin{align}
1 - \frac{LN}{(L-1)P} &\le \ln\frac{(L-1)P}{LN}, \label{eqn:compare-spherical-vs-ee}
\end{align}
where the LHS appears in \Cref{eqn:bound-spherical} and the RHS appears in \Cref{eqn:compare-lb-ee}. 
The above inequality follows from the elementary inequality $ \ln x\le x - 1 $ for any $ x>0 $. 
In fact, the LHS forms the first two terms of the Taylor series of the RHS at $ N/P = (L-1)/L $. 

\item 
However, \Cref{eqn:bound-spherical} gets worse quickly as $ N/P $ decreases, as plotted in \Cref{fig:compare-low,fig:compare-mid}. 
In particular, for $ N/P = 0 $, $ \ol C_{L-1}(P,N) $ should be infinity since clearly one can pack arbitrarily many balls of radius zero. 
However, the above bound attains a finite value (specifically $ \frac{1}{2}\paren{1 - \frac{\ln L}{L-1}} $) at $ N/P = 0 $. 
This will be explained in \Cref{eqn:spherical-bad-low-rate}. 

\item 
The above bound (\Cref{eqn:bound-spherical}) converges to $ \frac{1}{2}\paren{1 - \frac{N}{P}} $ as $ L\to\infty $. 
The limiting value is strictly smaller than $ \frac{1}{2}\ln\frac{P}{N} $ which is the list-decoding capacity for asymptotically large $L$ (see \Cref{sec:listdec-cap-large}). 

\item 
On the other hand, when $ L=2 $, the above bound becomes $ \frac{1}{2}\paren{1-\frac{2N}{P} - \ln\frac{2(P-N)}{P}} $ which is lower than the best know bound $ \frac{1}{2}\ln\frac{P^2}{4N(P-N)} $ in \Cref{thm:gv}. 
\end{enumerate}
\end{remark}

\Cref{thm:lb-spherical} will be proved in \Cref{sec:spherical-normal-apx,sec:spherical-khinchin} using two different methods: normal approximation and Khinchin's inequality. 
However, the observations in \Cref{rk:worse-spherical} suggest that the analysis in the proof of \Cref{thm:lb-spherical} is likely suboptimal. 
We then prove in \Cref{sec:spherical-improved} the following theorem which shows that (expurgated) spherical codes can in fact achieve a much higher rate.
Indeed, this rate is the exact asymptotics of this code ensemble and matches the best known lower bound \Cref{eqn:compare-lb-ee} which itself was originally proved using a completely different method (cf.\ \cite{zhang-split-ee}). 
On the other hand, the advantage of the techniques in \Cref{sec:spherical-khinchin} is that it can be easily extended to related ensembles such as (expurgated) ball codes (cf.\ \Cref{sec:ball-codes}), which is not the case for the techniques to be presented in \Cref{sec:spherical-improved}. 

\begin{theorem}
\label{thm:lb-spherical-improved}
For any $ P,N\in\bR_{>0} $ such that $ \frac{N}{P} \le\frac{L-1}{L} $ and $ L\in\bZ_{\ge2} $, the $ (P,N,L-1) $-average-radius list-decoding capacity is at least 
\begin{align}
\ol C_{L-1}(P,N)  &\ge  \frac{1}{2}\sqrbrkt{\ln\frac{(L-1)P}{LN}  +  \frac{1}{L-1}\ln\frac{P}{L(P-N)}}. \label{eqn:bound-spherical-improved}
\end{align}
\end{theorem}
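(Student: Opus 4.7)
The plan is to apply the random coding with expurgation framework (\Cref{thm:rand-cod-expurg}) to the same uniform spherical ensemble $\cD = \mathrm{Unif}(\cS^{n-1}(\sqrt{nP}))$ used in \Cref{thm:lb-spherical}, but to replace the loose Gaussian/Khinchin-type tail bound with a sharp large-deviation analysis. Using the spherical identity $\ol\rad^2(\vbfx_1, \ldots, \vbfx_L) = nP - \normtwo{\bar\vbfx}^2$ from \Cref{eqn:avg-rad-spherical-formula}, the bad-list event $\{\ol\rad^2 \le nN\}$ is equivalent to $\{\normtwo{\sum_i \vbfx_i}^2 \ge nL^2(P-N)\}$. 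If the large-deviation exponent of this event equals
$$\ol E(P,N,L) \;=\; \frac{L-1}{2}\ln\frac{(L-1)P}{LN} + \frac{1}{2}\ln\frac{P}{L(P-N)},$$
then \Cref{thm:rand-cod-expurg} delivers rate $\ol E/(L-1)$, which is exactly \Cref{eqn:bound-spherical-improved}.

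The key calculation is the sharp large-deviation exponent for the squared norm of a sum of $L$ iid uniform spherical codewords. I would obtain it by a Laplace-type evaluation of the corresponding multi-dimensional integral. Using rotational invariance, I align a fixed unit vector with the direction of the sum and parametrize each codeword by its scalar component along that direction (call it $\alpha_i \sqrt{nP}$) and an orthogonal component of squared norm $nP(1-\alpha_i^2)$, which may be written as $\sqrt{nP(1-\alpha_i^2)}\,\vbfu_i$ with $\vbfu_i$ uniform on the unit sphere of the orthogonal complement. Each $\alpha_i$ has marginal density $\propto (1-\alpha_i^2)^{(n-3)/2}$, which by \cramer's theorem (\Cref{thm:cramer-ldp}) gives a rate function $I(\alpha) = -\tfrac12 \ln(1-\alpha^2)$. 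The bad event then decomposes into a scalar constraint $\sum_i \alpha_i \ge L\sqrt{(P-N)/P}$ and a vector constraint $\sum_i \sqrt{1-\alpha_i^2}\,\vbfu_i = \vzero$; the exponents of these two sub-events will combine to give $\ol E$.

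The main obstacle is to carry out this combined evaluation carefully, since the scalar and orthogonal constraints are coupled through the common factor $\sqrt{1-\alpha_i^2}$ appearing in both. A Gaussian integral in $n-1$ dimensions (over the orthogonal components of all but one codeword) yields an asymptotic contribution of $\frac{1}{2}\ln\frac{P}{L(P-N)}$ from the orthogonal-sum-to-zero constraint, while optimizing the scalar piece over the symmetric saddle point produces the main term $\frac{L-1}{2}\ln\frac{(L-1)P}{LN}$. Tracking the subexponential factors (the $(n-3)/2$ exponent in the spherical marginal density, the Jacobian of the chosen parametrization, and the normalization of the conditional spherical measure) correctly is exactly what ensures the two pieces sum to precisely the claimed expression rather than to the looser form in \Cref{thm:lb-spherical}, where a quadratic/normal approximation replaces $\ln\frac{(L-1)P}{LN}$ by its first-order Taylor proxy $1 - \frac{LN}{(L-1)P}$. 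Once the exponent is verified, the expurgation step and the choice $R = \ol E/(L-1) - o(1)$ proceed identically to the proofs of \Cref{thm:lb-gaussian} and \Cref{thm:lb-spherical}: the expected number of bad $L$-lists is at most $\binom{M}{L}e^{-n\ol E}$, and expurgating one codeword from each yields an $(P,N,L-1)$-average-radius list-decodable spherical code of size at least $M/2$.
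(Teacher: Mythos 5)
Your overall plan — random coding with expurgation applied to the uniform spherical ensemble, with the bad-list event rewritten via $\ol\rad^2(\vbfx_1,\dots,\vbfx_L)=nP-\normtwo{\bar{\vbfx}}^2$ as $\normtwo{\sum_i\vbfx_i}^2\ge nL^2(P-N)$, and the target exponent $\ol E=\frac{L-1}{2}\ln\frac{(L-1)P}{LN}+\frac12\ln\frac{P}{L(P-N)}$ — is exactly right, and the expurgation step at the end is routine. The paper, however, computes this exponent by a quite different and much cleaner route: it replaces the uniform spherical measure by a thin truncated Gaussian (whose normalizer is only polynomially small, by the CLT), bounds both the sphere-constraint indicator and the error-event indicator by exponentials (Gallager's trick, with the error event written as the quadratic form $\sum_j\vec{x}_j^\top A\vec{x}_j\le LnN$), so that the whole probability factorizes over the $n$ coordinates into an $L$-dimensional Gaussian integral evaluated by the matrix determinant lemma; optimizing the two Chernoff parameters then yields $\ol E$.

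The genuine gap in your proposal is in the key calculation itself: the asserted attribution of the two terms of $\ol E$ to the scalar and orthogonal pieces is incorrect, and the quantities you defer as "subexponential factors" are in fact exponential in $n$. Concretely, at the symmetric saddle point the scalar components are $\alpha_i=\sqrt{(P-N)/P}$ (this is forced: at the optimal correlated configuration one checks $\inprod{\vx_i}{\sum_j\vx_j}/\paren{\sqrt{n}\normtwo{\sum_j\vx_j}}=\sqrt{(P-N)/P}$), so the scalar rate function contributes $L\cdot\paren{-\frac12\ln(1-\alpha_i^2)}=\frac{L}{2}\ln\frac{P}{N}$ — not $\frac{L-1}{2}\ln\frac{(L-1)P}{LN}$. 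Since $\frac{L}{2}\ln\frac{P}{N}$ already exceeds $\ol E$ for small $N/P$, the remaining contribution from the direction-of-the-sum degree of freedom and from the density of the orthogonal sum at zero must be exponentially \emph{large} (a negative contribution to the exponent), not a further cost of $\frac12\ln\frac{P}{L(P-N)}$; these two pieces cannot be evaluated independently because both depend on the saddle values of the $\alpha_i$. Moreover, fixing a deterministic direction only lower-bounds the probability (the event is a union over all directions), so to get the needed upper bound you must integrate over directions with the correct Jacobian — precisely the exponential bookkeeping your sketch postpones. The route can be made to work (e.g., via the Gram-matrix large deviation principle $\prob{G\approx G_0}\doteq(\det G_0)^{n/2}$, whose symmetric optimizer reproduces $\ol E$ exactly), but as written the decomposition would not assemble into the claimed exponent.
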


\begin{remark}
\label{rk:lb-asymp}
When $ L\to\infty $, the above bound (\Cref{eqn:bound-spherical}) converges to the list-decoding capacity $ \frac{1}{2}\ln\frac{P}{N} $ for $L\to\infty$ (see \Cref{sec:listdec-cap-large}). 
In fact, it can be shown that list-size $ L = \frac{1}{\eps}\ln\frac{1}{\eps} + 1 $ is sufficient to achieve $(P,N,L-1)$-list-decoding rate $ \frac{1}{2}\ln\frac{P}{N} - \cO(\eps) $ for any $ \eps\le\frac{1}{2} $ (which ensures $ \frac{1}{\eps}\ln\frac{1}{\eps} + 1\ge2 $). 
To see this, note that 
\begin{align}
\frac{1}{2}\sqrbrkt{\ln\frac{(L-1)P}{LN}  +  \frac{1}{L-1}\ln\frac{P}{L(P-N)}}
&= \frac{1}{2}\ln\frac{P}{N} - \frac{1}{2}\paren{\frac{\ln L}{L-1} - \frac{c}{L-1} + \ln\frac{L}{L-1}} , \notag 
\end{align}
where we let $ c \coloneqq \ln\frac{P}{P-N}>0 $. 
It then suffices to upper bound the term in parentheses on the RHS assuming $ L = \frac{1}{\eps}\ln\frac{1}{\eps} + 1 \ge 2 $. 
\begin{align}
\frac{\ln L}{L-1} - \frac{c}{L-1} + \ln\frac{L}{L-1} 
&= \frac{\ln L}{L-1} - \frac{c}{L-1} + \ln\paren{1 + \frac{1}{L-1}} \notag \\
&\le \frac{\ln L}{L-1} - \frac{c}{L-1} + \frac{1}{L-1} \label{eqn:bound-log-1} \\
&\le \frac{\ln\paren{\frac{1}{\eps}\ln\frac{1}{\eps} + 1}}{\frac{1}{\eps}\ln\frac{1}{\eps}} - \frac{c-1}{\frac{1}{\eps}\ln\frac{1}{\eps}} \notag \\
&= \frac{\ln\paren{\frac{1}{\eps}\ln\frac{1}{\eps}} + \ln\paren{1 + \frac{1}{\frac{1}{\eps}\ln\frac{1}{\eps}}}}{\frac{1}{\eps}\ln\frac{1}{\eps}} - \frac{c-1}{\frac{1}{\eps}\ln\frac{1}{\eps}} \notag \\
&\le \frac{\ln\frac{1}{\eps} + \ln\ln\frac{1}{\eps} + \frac{1}{\frac{1}{\eps}\ln\frac{1}{\eps}}}{\frac{1}{\eps}\ln\frac{1}{\eps}} - \frac{c-1}{\frac{1}{\eps}\ln\frac{1}{\eps}} \label{eqn:bound-log} \\
&= \eps + \eps\cdot\frac{\ln\ln\frac{1}{\eps}}{\ln\frac{1}{\eps}} + \paren{\frac{\eps}{\ln\frac{1}{\eps}}}^2 - \frac{(c-1)\eps}{\ln\frac{1}{\eps}} \notag \\
&= \cO(\eps) , \notag 
\end{align}
provided that $\eps$ is independent of $c$. 
\Cref{eqn:bound-log,eqn:bound-log-1} both follow from the elementary inequality $ \ln(1+x)\le x $ for any $x>-1$. 
This is to be contrasted with the necessary condition $ L = \Omega(\eps) $ to be derived in \Cref{rk:ub-asymp} from \Cref{thm:eb}. 

For $ L=2 $, \Cref{eqn:bound-spherical} recovers the best known bound $ \frac{1}{2}\ln\frac{P^2}{4N(P-N)} $ in \cite{blachman-1962}. 
Furthermore, it is tight at $ N/P=0 $ where the optimal density is $\infty$ and $ N/P=\frac{L-1}{L} $ where the optimal density is $ 0 $. 
\end{remark}

\subsection{Proof {of \Cref{thm:lb-spherical}} via normal approximation}
\label{sec:spherical-normal-apx}
We need the following lemma for approximating the density of a sum of uniform spherical vectors by that of a Gaussian (with a suitably chosen variance). 
\begin{lemma}[\cite{molavianjazi-laneman-2015-second-order-gmac}, Proposition 2]
\label{lem:apx-spherical}
Let $ P>0 $. 
Let $ \vbfx_1, \cdots, \vbfx_L $ be i.i.d.\ random vectors uniformly distributed on $ \cB^n( \sqrt{nP}) $. 
Let $ \wt\vbfx_1, \cdots, \wt\vbfx_L $ be i.i.d.\ Gaussian vectors each with distribution $ \cN(\vzero, P\cdot I_n) $. 
Let $ \vbfy \coloneqq \sum_{i = 1}^L\vbfx_i $ and $ \wt\vbfy \coloneqq\sum_{i = 1}^L\wt\vbfx_i $ respectively. 
Then for any $ \vy\in\bR^n $, we have
$ {P_{\vbfy}(\vy)}/{P_{\wt\vbfy}(\vy)} \le \kappa $, 
where $ \kappa = \kappa(P)>0 $ is a constant depending only on $P$ and $L$ but not $n$. 
\end{lemma}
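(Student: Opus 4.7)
The plan is to prove this lemma as a uniform local central limit theorem for the $L$-fold convolution of uniform ball densities, obtaining a ratio bound that holds pointwise in $\vy$ with a constant independent of $n$. I would proceed in three stages.

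First, I would exploit rotational symmetry. Both $\vbfy$ and $\wt\vbfy$ are rotationally invariant, so their densities depend only on $r \coloneqq \|\vy\|$; write $P_{\vbfy}(\vy) = p_L(r)$ and $P_{\wt\vbfy}(\vy) = \wt p_L(r)$. It suffices to bound $p_L(r)/\wt p_L(r)$ uniformly in $r \ge 0$. For $r > L\sqrt{nP}$ the ratio is trivially zero because $\|\vbfy\| \le L\sqrt{nP}$ almost surely, so only $r \in [0, L\sqrt{nP}]$ needs work.

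Second, I would pass to Fourier space. By rotational symmetry the characteristic function of $\vbfx_1$ is a radial function $\varphi$ given in closed form by a Bessel function of order $n/2$, namely
\begin{equation*}
\varphi(t) = \frac{\Gamma(n/2+1)}{(t\sqrt{nP}/2)^{n/2}} \, J_{n/2}\!\paren{t\sqrt{nP}},
\end{equation*}
with Taylor expansion $\varphi(t) = 1 - \tfrac{1}{2} \tfrac{nP}{n+2} t^2 + O(t^4)$ at the origin. The second-moment mismatch between $nP/(n+2)$ and the Gaussian variance $P$ is $O(1/n)$ and will produce only a bounded multiplicative discrepancy. I would then split the inverse Fourier transform of $\varphi(t)^L$ at a cutoff of order $n^{-1/2+\delta}$: the low-frequency piece matches the inverse Fourier transform of $\exp(-LPt^2/2)$, which is exactly $P_{\wt\vbfy}$, up to a uniformly bounded factor; the high-frequency tail decays rapidly because Bessel asymptotics and Stirling's formula imply $|\varphi(t)| \le 1 - \eta$ for $t$ outside a shrinking neighborhood of the origin, and the exponent $L \ge 2$ provides ample suppression.

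The main obstacle will be the near-boundary regime $r \uparrow L\sqrt{nP}$, where $p_L$ vanishes polynomially in $(L\sqrt{nP} - r)$ while the Gaussian benchmark $\wt p_L(r)$ is positive but exponentially small in $n$. Fourier inversion does not directly see this edge behavior. I would handle it separately by an inductive convolution argument on $L$: each additional convolution with a uniform ball density smooths the edge by one polynomial factor, yielding an explicit upper bound on $p_L(r)$ near the boundary that I would then compare with the explicit Gaussian formula for $\wt p_L(r)$. The polynomial edge behavior of $p_L$ is dominated by the sub-Gaussian tail of $\wt p_L$, ensuring that the ratio remains bounded by a constant $\kappa = \kappa(P, L)$ independent of $n$.
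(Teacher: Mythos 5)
First, a point of reference: the paper does not prove this lemma at all --- it is imported verbatim from \cite{molavianjazi-laneman-2015-second-order-gmac} (Proposition 2) and used as a black box in \Cref{sec:spherical-normal-apx}, so there is no in-paper proof to compare yours against. (Note also that the statement as printed says the $\vbfx_i$ are uniform on $\cB^n(\sqrt{nP})$, whereas the cited proposition and the application in \Cref{sec:spherical-normal-apx} concern vectors uniform on the sphere $\cS^{n-1}(\sqrt{nP})$; this affects which Bessel order appears in your characteristic function, though both versions of the claim are true.) Your Fourier-analytic local-CLT plan is in the spirit of the original reference, which likewise works with Bessel-function representations of these radial densities.

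That said, your sketch has gaps precisely at the two places where the actual work lives, and both concern the exponential-in-$n$ scales that a \emph{density-ratio} bound in growing dimension must match. For the high-frequency tail, the claim that ``the exponent $L\ge2$ provides ample suppression'' is not right: for fixed $L$, a bound $|\varphi(t)|\le 1-\eta$ yields only the constant $(1-\eta)^L$, which gives no decay in $n$ whatsoever, while the quantity you must beat, $P_{\wt\vbfy}(\vy) = (2\pi LP)^{-n/2}e^{-\normtwo{\vy}^2/(2LP)}$, is exponential in $n$. What actually saves the argument is that $|\varphi(t)|$ is itself exponentially small in $n$ outside an $O(n^{-1/2})$ neighborhood of the origin (visible from the $\Gamma(n/2+1)(t\sqrt{nP}/2)^{-n/2}J_{n/2}(t\sqrt{nP})$ form), and even then one must verify that its exponent dominates the Gaussian's at every $\vy$ --- a comparison of rate functions, not a fixed-$\eta$ bound. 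Similarly, in the edge regime $r\uparrow L\sqrt{nP}$, the statement that ``the polynomial edge behavior of $p_L$ is dominated by the sub-Gaussian tail of $\wt p_L$'' points the inequality the wrong way: $\wt p_L$ sits in the denominator, so you need a \emph{lower} bound on it, namely $\wt p_L(r)\approx(2\pi LP)^{-n/2}e^{-nL/2}$ near the edge, together with an \emph{upper} bound on $p_L(r)$ whose exponent in $n$ is at least as negative; the event $\normtwo{\vbfy}\approx L\sqrt{nP}$ forces all $L$ vectors to be nearly aligned and long, which is a large-deviations computation in $n$, not the one-polynomial-factor-per-convolution smoothing in $(L\sqrt{nP}-r)$ that you describe. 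Until those two exponent comparisons are carried out, the proposal does not establish that $\kappa$ can be taken independent of $n$.
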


Assume $ N = \frac{L-1}{L}P(1-\rho) $ for some constant $ \rho>0 $.
Equivalently, $ \rho = 1 - \frac{LN}{(L-1)P} $. 
We are ready to compute \Cref{eqn:tail-of-rad-to-comp} with $ \phi(\cdot) = \ol\rad^2(\cdot) $. 
\begin{align}
\prob{\ol\rad^2(\vbfx_1,\cdots,\vbfx_L)\le nN} &= \prob{ nP - \normtwo{\vbfx}^2 \le nN } \label{eqn:define-x} \\
&= \prob{ \normtwo{\vbfx}^2 \ge n(P - N) } \notag \\
&= \prob{ \normtwo{\vbfx}^2 \ge n\paren{\frac{P}{L} + \paren{1 - \frac{1}{L}}P\rho} } \notag \\
&= \prob{ \normtwo{\vbfy}^2 \ge nLP(1 + \eta) } \label{eqn:define-y} \\
&= \int_{\bR^n} P_{\vbfy}(\vy)\indicator{\normtwo{\vy}^2 \ge nLP(1 + \eta)} \diff\vy  \notag \\
&\le \kappa \int_{\bR^n} P_{\wt\vbfy}(\vy)\indicator{\normtwo{\vy}^2 \ge nLP(1 + \eta)} \diff\vy \label[ineq]{ineq:apx-spherical} \\
&= \kappa\prob{\normtwo{\wt\vbfy}^2 \ge nLP(1 + \eta)}, \label{eqn:define-tilde-y} 
\end{align}
In \Cref{eqn:define-x}, 
$ \vbfx \coloneqq \frac{1}{L}\sum_{i = 1}^L\vbfx_i $. 
In \Cref{eqn:define-y}, we define $\vbfy \coloneqq L\vbfx = \sum_{i = 1}^L\vbfx_i$ 
and $ \eta\coloneqq (L-1)\rho $.  
\Cref{ineq:apx-spherical} is by \Cref{lem:apx-spherical} and $ \wt\vbfy = \sum_{i = 1}^L \wt\vbfx_i $ where $ \vbfx_i\iid\cN(\vzero, P\cdot I_n) $ for $ 1\le i\le L $. 
Note that the probability in \Cref{eqn:define-tilde-y} is nothing but the tail of a chi-square random variable. 
Indeed, 
\begin{align}
\prob{\normtwo{\wt\vbfy}^2 \ge nLP(1 + \eta)} &= \prob{ \sum_{j = 1}^n \wt\vbfy(j)^2 \ge nLP(1 + \eta) } \notag \\
&= \prob{\sum_{j = 1}^n \paren{\sum_{i = 1}^L\wt\vbfx_i(j)}^2 \ge nLP(1 + \eta)} \notag \\
&= \prob{\sum_{j = 1}^n \cN_j(0, LP)^2 \ge nLP(1 + \eta)} \notag \\
&= \prob{\sum_{j = 1}^n \cN_j(0, 1)^2 \ge n(1 + \eta)} \notag \\
&= \prob{\chi^2(n) \ge n(1 + \eta)} \notag \\
&\doteq \exp\paren{\frac{n}{2}\paren{-\eta + \ln(1 + \eta)}}. \label{eqn:bound-normal-apx} 
\end{align}
The last equality (\Cref{eqn:bound-normal-apx}) is by \Cref{fact:asymp-chi-sq}. 
Going through the same expurgation process as described in \Cref{sec:rand-cod-exp}, we obtain a $(P,N,L-1)$-average-radius list-decodable code of rate asymptotically $R$ as long as 
\begin{align}
LR + {\frac{1}{2}\paren{-\eta + \ln(1 + \eta)}} &\le R, \notag 
\end{align}
Since $ \rho = 1 - \frac{LN}{(L-1)P} $, this is equivalent to
\begin{align}
R &\le \frac{1}{2(L-1)}\paren{\eta - \ln(1 + \eta)} \notag \\
&= \frac{1}{2}\paren{\rho - \frac{1}{L - 1}\ln(1 + (L-1)\rho)} \notag \\
&= \frac{1}{2}\paren{1 - \frac{LN}{(L-1)P} - \frac{1}{L-1}\ln\paren{1 + (L-1) - \frac{LN}{P}}} \notag \\
&= \frac{1}{2}\paren{1 - \frac{LN}{(L-1)P} - \frac{1}{L-1}\ln\frac{L(P-N)}{P}}. \notag 
\end{align}
This finishes the proof of \Cref{thm:lb-spherical}.

\begin{remark}
\label{eqn:spherical-bad-low-rate}
Note that \Cref{eqn:bound-spherical} equals $ \frac{1}{2}\paren{1 - \frac{\ln L}{L-1}}<\infty $ when $ N/P = 0 $. 
However, if $ N/P = 0 $, \Cref{eqn:define-x} is obviously zero since $ \normtwo{\vbfx}^2\le\paren{\frac{1}{L}\cdot L\sqrt{nP}}^2 = nP $ for any $ \vbfx_1, \cdots,\vbfx_L\in\cB^n(\sqrt{nP}) $. 
This means that we should be able to achieve an arbitrarily large $R$ in this case. 
The reason why \Cref{eqn:bound-spherical} is incorrect at high rates is that \Cref{ineq:apx-spherical} is loose when $ N/P $ is small. 
In particular, if $ N/P = 0 $, we have $ \rho = 1 $, therefore $ \eta = L-1 $.
The probability $ \prob{\normtwo{\wt\vbfx}^2\ge nP} $, where $ \wt\vbfx = \frac{1}{L}\sum_{i = 1}^L\wt\vbfx_i $ and each $ \wt\vbfx_i $ is i.i.d.\ Gaussian, equals $ \prob{\chi^2(n)\ge nL} $. 
This probability has a finite (rather than infinite) exponent. 

Given the above observation, it is tempting to consider better versions of normal approximation in \Cref{ineq:apx-spherical}. 
The simplest refinement would be to truncate the p.d.f.\ at $ \normtwo{\vy} = L\sqrt{nP} $, i.e., 
\begin{align}
P_{\vbfy}(\vy) &\le \kappa P_{\wt\vbfy}(\vy) \indicator{\normtwo{\vy}^2\le nL^2P}. \notag 
\end{align}
Unfortunately, the above refinement does not give rise to an improved bound on the achievable rate. 
\end{remark}

\subsection{Proof {of \Cref{thm:lb-spherical}} via Khinchin's inequality}
\label{sec:spherical-khinchin}
It was proved in \cite[Theorem 3]{konig-2001-khinchin-spherical} that the $p$-th moment of the norm of the sum of a collection of spherical vectors is \emph{upper bounded}\footnote{Note that the $p$-th moment is trivially \emph{lower bounded} by the second moment simply by the monotonicity (in $p$) of moment $ \paren{\expt{(\cdot)^p}}^{1/p} $.} by it second moment. 
\begin{theorem}[\cite{konig-2001-khinchin-spherical}]
\label{thm:khinchin-spherical}
Let $ 1\le p<\infty $ and $ n\ge2 $. 
If $ \vbfx_1,\cdots,\vbfx_k $ are independent and uniformly distributed on $ \cS^{n-1} $, then for any $ k\in\bZ_{\ge1} $ and $ \vec\alpha = [\vec\alpha(1),\cdots,\vec\alpha(k)]\in\bR^k $,
\begin{align}
\paren{\expt{\normtwo{\sum_{i = 1}^k\vec\alpha(i)\vbfx_i}^p}}^{1/p} &\le C_{n,p} \normtwo{\vec\alpha}, \label[ineq]{eqn:khinchin-ineq}
\end{align}
where 
\begin{align}
C_{n,p} \coloneqq& \sqrt{\frac{2}{n}}\paren{\frac{\Gamma\paren{\frac{p+n}{2}}}{\Gamma\paren{\frac{n}{2}}}}^{1/p}. \notag 
\end{align}
\end{theorem}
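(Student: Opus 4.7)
My strategy is to use rotational invariance of the uniform measure on $\cS^{n-1}$ to reduce the $n$-dimensional moment inequality to a one-dimensional Gaussian-domination bound, and then to establish that $1$D bound via a polar representation of the Gaussian.

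First, since each $\vbfx_i$ is uniform on $\cS^{n-1}$, a simultaneous orthogonal rotation preserves the joint law of $(\vbfx_1,\ldots,\vbfx_k)$, so the sum $Y \coloneqq \sum_{i=1}^k \vec\alpha(i)\vbfx_i$ is rotationally invariant in $\bR^n$. Thus $Y \stackrel{d}{=} \normtwo{Y}\cdot U$ with $U$ uniform on $\cS^{n-1}$ independent of $\normtwo{Y}$, which yields the identity $\expt{|Y(1)|^p}=\expt{\normtwo{Y}^p}\,\expt{|U(1)|^p}$ and reduces the theorem to a scalar moment bound. Since $U(1)^2\sim\mathrm{Beta}(1/2,(n-1)/2)$, a direct Beta-moment computation gives the closed form
\[\expt{|U(1)|^p} = \frac{\Gamma((p+1)/2)\,\Gamma(n/2)}{\sqrt{\pi}\,\Gamma((n+p)/2)}.\]
A short simplification then shows $C_{n,p}^p\cdot\expt{|U(1)|^p} = \expt{|g/\sqrt{n}|^p}$ for $g\sim\cN(0,1)$, so the theorem is equivalent to the one-dimensional Gaussian-domination inequality
\[\expt{\Bigl|\sum_{i=1}^k \vec\alpha(i)\,\vbfx_i(1)\Bigr|^p} \le \expt{\Bigl|\sum_{i=1}^k \vec\alpha(i)\,\tilde g_i\Bigr|^p},\]
where the $\tilde g_i\iid\cN(0,1/n)$ have exactly the same variance as the spherical marginals $\vbfx_i(1)$.

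Next, I would prove this $1$D inequality via the polar decomposition $\vbfg_i = \normtwo{\vbfg_i}\,\vbfx_i$ of i.i.d.\ standard Gaussians $\vbfg_i\sim\cN(0,I_n)$, in which the magnitude is independent of the direction. This representation gives $\vbfg_i(1)/\sqrt{n} = (\normtwo{\vbfg_i}/\sqrt{n})\,\vbfx_i(1)$ and hence $\sum_i\vec\alpha(i)\,\tilde g_i \stackrel{d}{=} \sum_i \vec\alpha(i)\,\vbfx_i(1)\,\normtwo{\vbfg_i}/\sqrt{n}$, with the radial scalars independent of the spherical marginals. Conditioning on $(\vbfx_i(1))_i$ and exploiting convexity of $t\mapsto|t|^p$ together with independence of the radial factors should then yield the desired Gaussian domination.

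\textbf{Main obstacle.} The delicate point is extracting the \emph{sharp} constant $C_{n,p}$ rather than a dimension-dependent loss. A naive Jensen step that replaces $\normtwo{\vbfg_i}^p$ by $(\expt\normtwo{\vbfg_i})^p$ loses a factor governed by Gautschi's inequality $\Gamma((n+1)/2)/\Gamma(n/2) \le \sqrt{n/2}$, and this loss is strict for every $p\ne 2$ and finite $n$. Recovering the exact constant therefore requires processing the radial magnitudes and the spherical marginals in a coupled fashion, essentially the content of the K\"onig--Kwapie\'n moment-comparison argument; this is the only nontrivial step in the program. Once the sharp $1$D bound is in hand, combining it with the explicit formula for $\expt{|U(1)|^p}$ through the rotational-invariance identity produces precisely $\expt{\normtwo{Y}^p}^{1/p} \le C_{n,p}\normtwo{\vec\alpha}$.
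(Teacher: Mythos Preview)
The paper does not prove this theorem at all: it is quoted verbatim from \cite{konig-2001-khinchin-spherical} and used as a black box in \Cref{sec:spherical-khinchin}. There is therefore no ``paper's own proof'' to compare your proposal against.

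That said, your outline is a faithful sketch of the K\"onig--Kwapie\'n argument: the rotational-invariance reduction to a scalar moment bound is correct, the Beta-moment computation for $\expt{|U(1)|^p}$ is standard, and the polar representation $\vbfg_i = \normtwo{\vbfg_i}\,\vbfx_i$ is exactly the device used in \cite{konig-2001-khinchin-spherical}. You are also right that the only nontrivial step is the sharp one-dimensional Gaussian-domination inequality, and you correctly flag that a naive Jensen step loses a constant. Your proposal stops short of actually proving that step --- you defer to ``the K\"onig--Kwapie\'n moment-comparison argument'' --- so as written this is a proof plan rather than a proof. If you want a self-contained argument, the missing ingredient in \cite{konig-2001-khinchin-spherical} is a convexity/mixture argument showing that for symmetric random variables $\xi$ with $\expt{\xi^2}=1$ satisfying a certain moment condition, $\expt{|a\xi+bZ|^p}\le\expt{|ag+bZ|^p}$ for all $a,b$ and independent $Z$, where $g\sim\cN(0,1)$; this is then iterated coordinate by coordinate.
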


\begin{remark}
The constant $ C_{n,p} $ (independent of $ \vec\alpha $) is the best possible and is attained when $ \vec\alpha = [1/\sqrt{k},\cdots,1/\sqrt{k}]\in\bR^k $ and $ k\to\infty $. 
\end{remark}

\begin{remark}
It is not hard to check that $ \normtwo{\vec\alpha} = \sqrt{\expt{\normtwo{\sum_{i = 1}^k\vec\alpha(i)\vbfx_i}^2}} $.
Indeed, 
\begin{align}
\paren{\expt{\normtwo{\sum_{i = 1}^k\vec\alpha(i)\vbfx_i}^2}}^{1/2}
&= \paren{ \expt{\sum_{(i,j)\in[k]^2}\vec\alpha(i)\vec\alpha(j)\inprod{\vbfx_i}{\vbfx_j}} }^{1/2} \notag \\
&= \paren{\expt{\sum_{i\in[k]}\vec\alpha(i)^2\normtwo{\vbfx_i}^2} + \expt{\sum_{(i,j)\in[k]^2:i\ne j}\vec\alpha(i)\vec\alpha(j)\inprod{\vbfx_i}{\vbfx_j}}}^{1/2} \notag \\
&= \paren{\sum_{i\in[k]}\vec\alpha(i)^2 + \sum_{(i,j)\in[k]^2:i\ne j}\vec\alpha(i)\vec\alpha(j)\sum_{\ell\in[n]}\expt{\vbfx_i(\ell)\vbfx_j(\ell)}}^{1/2} \notag \\
&= \paren{\sum_{i\in[k]}\vec\alpha(i)^2 + \sum_{(i,j)\in[k]^2:i\ne j}\vec\alpha(i)\vec\alpha(j)\sum_{\ell\in[n]}\expt{\vbfx_i(\ell)}\expt{\vbfx_j(\ell)}}^{1/2} \label{eqn:2mmt-indep} \\
&= \paren{\sum_{i\in[k]}\vec\alpha(i)^2}^{1/2} \notag \\
&= \normtwo{\vec\alpha}. \notag 
\end{align}
\Cref{eqn:2mmt-indep} follows since $ \vbfx_i $ and $ \vbfx_j $ are independent for $ 1\le i\ne j\le k $. 
Therefore, \Cref{eqn:khinchin-ineq} is a moment comparison inequality. 
Also, the constant $ C_{n,p} $ in \Cref{eqn:khinchin-ineq} is in fact equal to $ C_{n,p} = \paren{\expt{\normtwo{\vbfg}^p}}^{1/p} $ where $ \vbfg\sim\cN(\vzero, I_n/n) $. 
\end{remark}

We can now get an upper bound on \Cref{eqn:define-x} by combining \Cref{thm:khinchin-spherical} and the Chebyshev's inequality (\Cref{lem:cheb}). 
Let $ \vbfy \coloneqq \sum_{i = 1}^L\vbfx_i $. 
Then
\begin{align}
\prob{\ol\rad^2(\vbfx_1,\cdots,\vbfx_L)\le nN} 
&= \prob{\normtwo{\vbfy}^2 \ge nLP(1+\eta)} \notag \\
&= \prob{\normtwo{\sum_{i = 1}^L\vbfu_i}\ge \sqrt{L(1+\eta)}} \label{eqn:define-u} \\
&= \prob{\normtwo{\sum_{i = 1}^L\vbfu_i}^p\ge(L(1+\eta))^{p/2}} \notag \\
&\le \frac{\expt{\normtwo{\sum_{i = 1}^L\vbfu_i}^p}}{(L(1+\eta))^{p/2}} . \label{eqn:to-apply-khinchin} 
\end{align}
In \Cref{eqn:define-u}, $ \vbfu_i\iid \unif(\cS^{n-1}) $ for $ 1\le i\le L $. 
To apply \Cref{thm:khinchin-spherical}, we let $ k = L $, $ \vec\alpha = [1,\cdots,1]\in\bR^L $. 
Note that $ \normtwo{\vec\alpha} = \sqrt{L} $. 
Therefore \Cref{eqn:to-apply-khinchin} is at most
\begin{align}
\Cref{eqn:to-apply-khinchin} &\le \frac{C_{n,p}^pL^{p/2}}{(L(1+\eta))^{p/2}} \notag \\
&= \paren{\frac{2}{n}}^{p/2} \frac{\Gamma\paren{\frac{p+n}{2}}}{\Gamma\paren{\frac{n}{2}}} (1+\eta)^{-p/2} \notag \\
&\stackrel{n\to\infty}{\asymp} \paren{\frac{2}{n}}^{p/2} 
\frac{\sqrt{\pi(p+n-2)} \paren{\frac{\frac{p+n}{2}-1}{e}}^{\frac{p+n}{2}-1}}{\sqrt{\pi(n-2)} \paren{\frac{\frac{n}{2}-1}{e}}^{\frac{n}{2}-1}} (1+\eta)^{-p/2} \label{eqn:stirling-gamma} \\
&= \paren{\frac{2}{n}}^{p/2} 
\sqrt{\frac{n+p-2}{n-2}} \paren{\frac{n+p-2}{2e}}^{\frac{n+p}{2} - 1}\paren{\frac{2e}{n-2}}^{\frac{n}{2}-1} (1+\eta)^{-p/2} \notag \\
&= \paren{\frac{n+p-2}{ne}}^{p/2} \sqrt{\frac{n+p-2}{n-2}}
\paren{\frac{n+p-2}{n-2}}^{\frac{n}{2}-1} (1+\eta)^{-p/2} \notag \\
&= \paren{\frac{n+p-2}{ne(1+\eta)}}^{p/2} \sqrt{\frac{n+p-2}{n-2}} \paren{1+\frac{p}{n-2}}^{\frac{n}{2}-1} . \label{eqn:to-set-p}
\end{align}
In \Cref{eqn:stirling-gamma}, we use the Stirling's approximation (\Cref{thm:stirling}) for Gamma functions. 
If we set $ p = \alpha n $ for some $ \alpha>0 $ and let $ A \coloneqq e(1+\eta) $, $ \beta = 1+\alpha $, then asymptotically in $ n\to\infty $, \Cref{eqn:to-set-p} becomes
\begin{align}
\Cref{eqn:to-set-p} &\stackrel{n\to\infty}{\asymp} \paren{\frac{1+\alpha}{e(1+\eta)}}^{n\alpha/2} \sqrt{1+\alpha} (1+\alpha)^{n/2-1} \notag \\
&= (1+\alpha)^{-1/2} \paren{\frac{1+\alpha}{e(1+\eta)}}^{n\alpha/2} (1+\alpha)^{n/2} \notag \\
&= (1+\alpha)^{-1/2} \exp\paren{\frac{n}{2}\paren{\alpha\ln(1+\alpha) - \alpha\ln A + \ln(1+\alpha)}} \notag \\
&= (1+\alpha)^{-1/2} \exp\paren{\frac{n}{2}\paren{(1+\alpha)\ln(1+\alpha) - (1+\alpha)\ln A+\ln A}} \notag \\
&= \beta^{-1/2}\exp\paren{\frac{n}{2}\paren{\beta(\ln\beta-\ln A) + \ln A}}. \notag
\end{align}
The exponent is maximized at $ \beta = e^{\ln A - 1} = A/e = 1+\eta $, that is, $ \alpha = \eta $. 
The corresponding maximum exponent (normalized by $1/n$) is 
\begin{align}
\frac{1}{2}\paren{(1+\eta)(\ln(1+\eta)-\ln(e(1+\eta))) + \ln(e(1+\eta))} = \frac{1}{2}(-(1+\eta) + \ln(1+\eta) + 1) = \frac{1}{2}(-\eta+\ln(1+\eta)) \notag
\end{align}
which matches \Cref{eqn:bound-normal-apx}. 
The rest of the proof remains the same and we get the same bound.

\subsection{Ball codes}
\label{sec:ball-codes}
Since the $p$-th moment of the norm of the sum of vectors uniformly distributed on unit \emph{sphere} is related to that of the sum of vectors uniformly distributed in unit \emph{ball}, we also have Khinchin's inequality for the latter quantity. 

\begin{theorem}[\cite{konig-2001-khinchin-spherical}, Proposition 4]
Let $ 0<p<\infty $, $ n\in\bZ_{\ge1} $ and $ r>0 $. 
Let $ \vec x_1,\cdots,\vec x_k\in\bR^{n+2} $ be independent random vectors uniformly distributed on $ \cS^{n+1} $. 
Let $ \vbfx_1,\cdots,\vbfx_k\in\bR^n $ be independent random vectors uniformly distributed in $ r\cB^n $. 
Then for any $ k\in\bZ_{\ge1} $ and $ (\alpha_1,\cdots,\alpha_k)\in\bR^k $, 
\begin{align}
\expt{\normtwo{\sum_{i = 1}^k\alpha_i\vbfx_i}^p} &= r^p\frac{n}{n+p} \expt{\normtwo{\sum_{i = 1}^k\alpha_i\vec x_i}^p}. \notag 
\end{align}
\end{theorem}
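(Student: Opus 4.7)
The plan is to exploit two classical facts about uniform measures on Euclidean spheres. First, the projection of a uniformly random point on $\cS^{n+1}\subset\bR^{n+2}$ onto its first $n$ coordinates is uniformly distributed on the unit ball $\cB^n$ (this is the ``Archimedes--type'' projection formula; the density of the first $n$ coordinates is proportional to $(1-\|\vec y\|^2)^{(n+2-n-2)/2}=1$ on $\cB^n$). Second, since each $\vec x_i$ is rotationally invariant in $\bR^{n+2}$, so is the weighted sum $\bfS \coloneqq \sum_{i=1}^k\alpha_i\vec x_i$; hence $\bfS$ admits a polar decomposition $\bfS = \bfR\,\bfU$ where $\bfR \coloneqq \normtwo{\bfS}$ and $\bfU \coloneqq \bfS/\bfR$ is uniform on $\cS^{n+1}$ and \emph{independent} of $\bfR$.

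Concretely, I would first reduce to the case $r=1$ by the homogeneity observation $\normtwo{\sum_i\alpha_i\vbfx_i}^p = r^p\normtwo{\sum_i\alpha_i\tilde\vbfx_i}^p$ where $\tilde\vbfx_i$ is uniform on $\cB^n$. Next, write $\vec x_i = (\tilde\vbfy_i,\bfz_i)$ with $\tilde\vbfy_i\in\bR^n$ and $\bfz_i\in\bR^2$; by the projection fact, $\tilde\vbfy_1,\dots,\tilde\vbfy_k$ are i.i.d.\ uniform on $\cB^n$, hence have the same joint law as $\tilde\vbfx_1,\dots,\tilde\vbfx_k$. Thus it suffices to compare the $p$-th moments of $\normtwo{\bfT}$ and $\normtwo{\bfS}$ where $\bfT\coloneqq\sum_i\alpha_i\tilde\vbfy_i$ is the projection of $\bfS$ onto the first $n$ coordinates.

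By the rotational invariance of $\bfS$, write $\bfS = \bfR\bfU$ with $\bfR$ and $\bfU$ independent as above. Then $\bfT = \bfR\,\bfU_{1:n}$, so $\normtwo{\bfT}^p = \bfR^p\normtwo{\bfU_{1:n}}^p$ with $\bfR$ and $\bfU_{1:n}$ independent. Taking expectations,
\begin{align}
\expt{\normtwo{\bfT}^p} = \expt{\bfR^p}\cdot\expt{\normtwo{\bfU_{1:n}}^p}. \notag
\end{align}
By the projection fact again, $\bfU_{1:n}$ is uniform on $\cB^n$, so a one-line integration in polar coordinates gives
\begin{align}
\expt{\normtwo{\bfU_{1:n}}^p} = \frac{1}{V_n}\int_0^1 \rho^p\cdot nV_n\rho^{n-1}\diff\rho = \frac{n}{n+p}. \notag
\end{align}
Combining these with $\expt{\bfR^p}=\expt{\normtwo{\bfS}^p}$ and restoring the factor $r^p$ yields the claim.

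I do not expect a serious obstacle: the two ingredients (projection of uniform sphere measure to a uniform ball measure, and independence of radius and direction for rotationally invariant distributions) are standard. The only delicate point is verifying the projection statement in the right dimension so that the resulting density on $\cB^n$ is \emph{exactly} uniform (which is why the sphere lives in $\bR^{n+2}$, not $\bR^{n+1}$), and ensuring that the independence of $\bfR$ and $\bfU$ is invoked correctly so that the product decomposition of $\expt{\normtwo{\bfT}^p}$ is valid.
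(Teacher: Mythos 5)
Your proposal is correct, and there is nothing in the paper to compare it against: the paper imports this identity from K\"onig's work as a black box and gives no proof, so your argument stands as a self-contained derivation. The two ingredients you use are exactly the right ones. The Archimedes-type projection formula does give density proportional to $(1-\normtwo{\vec y}^2)^{(d-k-2)/2}$ for the first $k$ coordinates of a uniform point on $\cS^{d-1}$, and your choice $d=n+2$, $k=n$ makes the exponent zero, so the projection is exactly uniform on $\cB^n$ --- this is precisely why the sphere must live in $\bR^{n+2}$. The polar factorization of the rotationally invariant sum, with radius independent of direction, then cleanly separates $\expt{\normtwo{\cdot}^p}$ into the product you write, and the one-line radial integral gives the factor $\tfrac{n}{n+p}$. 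The only point worth a sentence in a final write-up is the degenerate case: the direction of the sum is well defined and uniform only on the event that the sum is nonzero, which fails with positive probability only when all $\alpha_i=0$, in which case both sides of the identity vanish and there is nothing to prove; otherwise the sum has an absolutely continuous law and vanishes with probability zero. With that caveat noted, the proof is complete.
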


\begin{theorem}[\cite{konig-2001-khinchin-spherical}, Theorem 5]
\label{thm:khinchin-ball}
Let $ 1\le p<\infty $ and $ n\ge1 $. 
If $ \vbfx_1,\cdots,\vbfx_k $ are independent and uniformly distributed on $ \cB^n $, then for any $ k\in\bZ_{\ge1} $ and $ \vec\alpha = [\vec\alpha(1),\cdots,\vec\alpha(k)]\in\bR^k $,
\begin{align}
\paren{\expt{\normtwo{\sum_{i = 1}^k\vec\alpha(i)\vbfx_i}^p}}^{1/p} &\le \wt C_{n,p} \normtwo{\vec\alpha}, \notag
\end{align}
where 
\begin{align}
\wt C_{n,p} \coloneqq& \sqrt{\frac{2}{n+2}}\paren{\frac{\Gamma\paren{\frac{p+n}{2}}}{\Gamma\paren{\frac{n}{2}}}}^{1/p}. \notag 
\end{align}
\end{theorem}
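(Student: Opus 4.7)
The plan is to deduce this Khinchin-type inequality for balls from the already-established spherical Khinchin inequality (\Cref{thm:khinchin-spherical}) via the moment identity just stated (Proposition 4 of \cite{konig-2001-khinchin-spherical}), which relates $p$-th moments of weighted sums of uniform ball vectors in $\bR^n$ to $p$-th moments of weighted sums of uniform spherical vectors on $\cS^{n+1}\subset\bR^{n+2}$. The ``dimension jump'' of two is precisely what will convert the spherical constant $C_{n+2,p}$ into the claimed ball constant $\wt C_{n,p}$, so no genuinely new probabilistic estimate is needed.

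First, I would apply Proposition 4 with $r=1$ to write, for any coefficients $\vec\alpha\in\bR^k$,
\begin{align}
\expt{\normtwo{\sum_{i=1}^k \vec\alpha(i)\vbfx_i}^p}
&= \frac{n}{n+p}\,\expt{\normtwo{\sum_{i=1}^k \vec\alpha(i)\vec x_i}^p}, \notag
\end{align}
where $\vec x_1,\dots,\vec x_k$ are i.i.d.\ uniform on $\cS^{n+1}\subset\bR^{n+2}$. Next I would apply \Cref{thm:khinchin-spherical} in dimension $n+2$ to upper bound the spherical moment on the right by $C_{n+2,p}^p\,\normtwo{\vec\alpha}^p$, with
\begin{align}
C_{n+2,p}^p &= \paren{\frac{2}{n+2}}^{p/2}\cdot\frac{\Gamma\paren{\frac{p+n+2}{2}}}{\Gamma\paren{\frac{n+2}{2}}}. \notag
\end{align}

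The final step is a one-line Gamma function simplification. Using the functional equation $\Gamma(z+1)=z\Gamma(z)$ twice, one gets
\begin{align}
\frac{\Gamma\paren{\frac{p+n+2}{2}}}{\Gamma\paren{\frac{n+2}{2}}}
&= \frac{\frac{p+n}{2}}{\frac{n}{2}}\cdot\frac{\Gamma\paren{\frac{p+n}{2}}}{\Gamma\paren{\frac{n}{2}}}
= \frac{n+p}{n}\cdot\frac{\Gamma\paren{\frac{p+n}{2}}}{\Gamma\paren{\frac{n}{2}}}, \notag
\end{align}
so the prefactor $\frac{n}{n+p}$ from Proposition 4 exactly cancels the ratio $\frac{n+p}{n}$ produced by the Gamma shift, leaving
\begin{align}
\expt{\normtwo{\sum_{i=1}^k \vec\alpha(i)\vbfx_i}^p}
&\le \paren{\frac{2}{n+2}}^{p/2}\frac{\Gamma\paren{\frac{p+n}{2}}}{\Gamma\paren{\frac{n}{2}}}\,\normtwo{\vec\alpha}^p
= \wt C_{n,p}^p\,\normtwo{\vec\alpha}^p. \notag
\end{align}
Taking the $p$-th root yields the desired inequality.

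There is no real obstacle here: the proof is essentially a reduction, and the only thing to verify carefully is the exact cancellation of $\frac{n}{n+p}$ against the Gamma identity, which I would highlight explicitly so the reader sees why the constant $\sqrt{2/(n+2)}$ (rather than $\sqrt{2/n}$) is the correct one. It is worth remarking that, exactly as in the spherical case, the constant $\wt C_{n,p}$ is optimal and is attained in the limit $\vec\alpha=(1/\sqrt k,\dots,1/\sqrt k)$, $k\to\infty$, but this optimality is not needed for the upper bound itself.
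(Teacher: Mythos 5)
Your derivation is correct: combining the moment identity of K\"onig's Proposition~4 (with $r=1$) with \Cref{thm:khinchin-spherical} applied in dimension $n+2$, and cancelling the factor $\frac{n}{n+p}$ against the Gamma shift $\Gamma\paren{\frac{p+n+2}{2}}/\Gamma\paren{\frac{n+2}{2}} = \frac{n+p}{n}\cdot\Gamma\paren{\frac{p+n}{2}}/\Gamma\paren{\frac{n}{2}}$, yields exactly $\wt C_{n,p}$. The paper itself states this result as a citation to \cite{konig-2001-khinchin-spherical} without proof, but the juxtaposition of Proposition~4 and the spherical inequality makes clear that your reduction is precisely the intended (and the original) argument.
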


Therefore, random \emph{ball} codes with expurgation can attain the same bound as \Cref{eqn:bound-spherical} which was attained by random \emph{spherical} codes with expurgation.

\subsection{Improved analysis of spherical codes}
\label{sec:spherical-improved}

In this section, we develop improved analysis of uniformly random spherical codes (with expurgation), showing that they in fact achieve the best known capacity lower bound (cf.\ \Cref{eqn:compare-lb-spherical-improved}) even under $ (P,N,L-1) $-\emph{average-radius} list-decoding.  
We are again interested in the asymptotics of \Cref{eqn:define-x}. 
We will obtain the exact normalized exponent of \Cref{eqn:define-x} via a combination of \cramer's large deviation theorem (\Cref{thm:cramer-ldp}) and Gallager's trick \cite{gallager-1965-simple-deriv} for bounding indicator functions. 
Plugging this bound into the random coding with expurgation framework, we get \Cref{thm:lb-spherical-improved}. 

Let $ \delta>0 $ be a sufficiently small constant. 
Define the following truncated Gaussian distribution $ Q\in\Delta(\bR^n) $: 
\begin{align}
Q(\vx) &\coloneqq Z_n^{-1} \phi_P^{\ot n}(\vx) \indicator{-\delta\le\sum_{i = 1}^n(\vx(i)^2 - P)\le0}, \notag 
\end{align}
where $ \phi_P(\cdot) $ is the (one-dimensional) Gaussian p.d.f.\ with variance $P$, 
and the normalizing constant $ Z_n $ is defined as
\begin{align}
Z_n &\coloneqq \int_{\bR^n} \phi_P^{\ot n}(\vx) \indicator{-\delta\le\sum_{i = 1}^n(\vx(i)^2 - P)\le0} \diff\vx. \label{eqn:def-zn} 
\end{align}
Note that as $\delta\to0$, $Q$ converges to $ \unif(\cS^{n-1}(\sqrt{nP})) $. 
Recall Gallager's trick for bounding the indicator function: $ \indicator{a>1}\le a $ for any $ a>0 $. 
Applying this to the $ x,t\in\bR $ and $ \lambda>0 $, we have 
\begin{align}
\indicator{x\ge t} &= \indicator{e^{\lambda x} \ge e^{\lambda t}} \le e^{\lambda(x-t)} . \notag 
\end{align}
This yields the following bound on the indicator in \Cref{eqn:def-zn}: for any $ s\ge0 $, 
\begin{align}
\indicator{-\delta\le\sum_{i = 1}^n(\vx(i)^2 - P)\le0} 
&\le \indicator{\sum_{i = 1}^n(\vx(i)^2 - P)\ge-\delta} 
\le \exp\paren{s\sum_{i = 1}^n(\vx(i)^2-P) + s\delta}. \label{eqn:gallager-trick} 
\end{align}
Similarly, we can bound the indicator of the error event $\indicator{\ol\rad^2(\vx_1,\cdots,\vx_L)\le nN}$ as follows: for any $\lambda\ge0$, 
\begin{align}
\indicator{\ol\rad^2(\vx_1,\cdots,\vx_L)\le nN}
= \indicator{\sum_{j = 1}^n \vec{x}_j^\top A\vec{x}_j \le LnN}
\le \exp\paren{-\lambda\sum_{j = 1}^n \vec{x}_j^\top A\vec{x}_j + \lambda LnN} . \label{eqn:gallager-trick-err-event} 
\end{align}
Here the equality follows since the average squared radius is a quadratic form (cf.\ \Cref{eqn:avg-rad-bd-g,eqn:def-g}), and $ \vec{x}_j\in\bR^L $ denotes the vector $ [\vx_1(j),\cdots,\vx_L(j)] $. 

We are interested in understanding the large deviation exponent of  
\begin{align}
\probover{(\vbfx_1,\cdots,\vbfx_L)\sim Q^\tl}{\ol\rad^2(\vbfx_1,\cdots,\vbfx_L)\le nN}. \label{eqn:spherical-tail} 
\end{align}
The above tail probability can be written explicitly as 
\begin{align}
& \int_{(\bR^n)^L} Z_n^{-L} \prod_{i = 1}^L \paren{\phi_P^{\ot n}(\vx_i) \indicator{-\delta\le\sum_{j = 1}^n(\vx_i(j)^2 - P)\le0}} 
 \indicator{\ol\rad^2(\vx_1,\cdots,\vx_L)\le nN} \diff(\vx_1,\cdots,\vx_L) \notag \\
&\le Z_n^{-L} \int_{(\bR^n)^L} \prod_{i = 1}^L \sqrbrkt{\phi_P^{\ot n}(\vx_i) \exp\paren{s\sum_{j = 1}^n(\vx_i(j)^2-P) + s\delta}} 
 \indicator{\ol\rad^2(\vx_1,\cdots,\vx_L)\le nN} 
 \diff(\vx_1,\cdots,\vx_L) \label{eqn:use-gallager-trick} \\
&= Z_n^{-L} e^{Ls\delta} \int_{\bR^{Ln}} \prod_{i = 1}^L \prod_{j = 1}^n \sqrbrkt{\phi_P(\vx_i(j)) \exp\paren{s(\vx_i(j)^2 - P)}} 
 \indicator{\ol\rad^2(\vx_1,\cdots,\vx_L)\le nN} \diff(\{\vx_i(j)\}_{i\in[L],j\in[n]}) \notag \\
&\le Z_n^{-L} e^{Ls\delta} \int_{(\bR^L)^n} \prod_{j = 1}^n\sqrbrkt{\phi_P^{\ot L}(\vec{x}_j)\exp\paren{s\sum_{i=1}^L(\vec{x}_j(i)^2-P)}} 
 \exp\paren{-\lambda\sum_{j = 1}^n \vec{x}_j^\top A\vec{x}_j + \lambda LnN}
 \diff(\vec{x}_1,\cdots,\vec{x}_n) \label{eqn:intro-col-vec} \\
&= Z_n^{-L} e^{Ls\delta} e^{\lambda LnN} \int_{(\bR^L)^n} \prod_{j = 1}^n \left[\phi_P^{\ot L}(\vec{x}_j) 
 \exp\paren{s\sum_{i=1}^L(\vec{x}_j(i)^2-P)} \exp\paren{-\lambda\vec{x}_j^\top A\vec{x}_j}\right] \diff(\vec{x}_1,\cdots,\vec{x}_n) \notag \\
&= Z_n^{-L} e^{Ls\delta} e^{\lambda LnN} \prod_{j = 1}^n \int_{\bR^L} \phi_P^{\ot L}(\vec{x}_j) 
 \exp\paren{s\sum_{i=1}^L(\vec{x}_j(i)^2-P) - \lambda\vec{x}_j^\top A\vec{x}_j} \diff\vec{x}_j \notag \\
&= Z_n^{-L} e^{Ls\delta} e^{\lambda LnN} e^{-sLPn} 
 \sqrbrkt{\int_{\bR^L} \phi_P^{\ot L}(\vec{x})\exp\paren{s\sum_{i=1}^L\vec{x}(i)^2 - \lambda\vec{x}^\top A\vec{x}} \diff\vec{x}}^n \notag \\
&= Z_n^{-L} e^{Ls\delta} e^{\lambda LnN} e^{-sLPn} \sqrt{2\pi P}^{-Ln} 
 \sqrbrkt{\int_{\bR^L} \exp\paren{\paren{-\frac{1}{2P} + s}\sum_{i=1}^L\vec{x}(i)^2 - \lambda\vec{x}^\top A\vec{x}} \diff\vec{x}}^n. \label{eqn:int-in-paren} 
\end{align}
\Cref{eqn:use-gallager-trick} is by \Cref{eqn:gallager-trick} and \Cref{eqn:intro-col-vec} is by \Cref{eqn:gallager-trick-err-event}.  

The integral in the brackets in \Cref{eqn:int-in-paren} is a Gaussian integral (\Cref{lem:gauss-int}) which can be evaluated as follows:
\begin{align}
& \int_{\bR^L} \exp\paren{\paren{-\frac{1}{2P} + s}\sum_{i=1}^L\vec{x}(i)^2 - \lambda\vec{x}^\top A\vec{x}} \diff\vec{x} \label{eqn:integral-sph-improved} \\
&= \int_{\bR^L} \exp\paren{\vec{x}^\top\paren{\paren{-\frac{1}{2P} + s}I_L - \lambda A}\vec{x}} \diff\vec{x} \notag \\
&= \int_{\bR^L} \exp\paren{- \vec{x}^\top\paren{\paren{\frac{1}{2P} - s + \lambda}I_L - \frac{\lambda}{L}J_L}\vec{x}} \diff\vec{x} \notag \\
&= \sqrt{\frac{\pi^L}{\det\paren{\paren{\frac{1}{2P} - s + \lambda}I_L - \frac{\lambda}{L}J_L}}} , \label{eqn:int-tbd} 
\end{align}
where $ I_L $ denotes the $ L\times L $ identity matrix and $ J_L $ denotes the $ L\times L $ all-one matrix. 
The determinant in the denominator can be computed using \Cref{lem:sherman-morrison}: 
\begin{align}
\det\paren{\paren{\frac{1}{2P} - s + \lambda}I_L - \frac{\lambda}{L}J_L} 
 &= \paren{\frac{1}{2P} - s + \lambda}^L \det\paren{I_L - \frac{\lambda}{L\paren{\frac{1}{2P} - s + \lambda}}\one_L\one_L^\top} \notag \\
 &= \paren{\frac{1}{2P} - s + \lambda}^L \paren{1 - \frac{\lambda}{L\paren{\frac{1}{2P} - s + \lambda}}\one_L^\top\one_L} \notag \\
 &= \paren{\frac{1}{2P} - s + \lambda}^{L-1} \paren{\frac{1}{2P} - s} , \notag  
\end{align}
where $ \one_L $ denotes the all-one vector of length $L$. 
Therefore, continuing with \Cref{eqn:int-tbd}, we have that the integral in \Cref{eqn:integral-sph-improved} equals
\begin{align}
\sqrt{\frac{\pi^L}{\paren{\frac{1}{2P} - s + \lambda}^{L-1} \paren{\frac{1}{2P} - s}}}. \label{eqn:int-result} 
\end{align}
Next we argue that for any constant $ \delta>0 $, the normalizing constant $ Z_n $ in fact scales polynomially as $n\to\infty$. 
\begin{lemma}
\label{lem:bound-z}
Let $ P,\sigma,\delta>0 $ be constants. 
Let $ \phi_P(x)\coloneqq\frac{e^{-x^2/2P}}{\sqrt{2\pi P}} $ be the Gaussian density with variance $P$. 
Let $ f(x) \coloneqq x^2 - P $. 
Let $ Z_n $ be defined by \Cref{eqn:def-zn}. 
Then
$Z_n \stackrel{n\to\infty}{\asymp} \frac{\delta}{2P\sqrt{\pi n}}$. 
\end{lemma}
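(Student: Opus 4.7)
The plan is to rewrite $Z_n$ as the probability that a chi-square random variable lies in a small interval around its mean, and then use Stirling's approximation (\Cref{thm:stirling}) to evaluate the density of the chi-square distribution near this point. If $\vbfx$ has density $\phi_P^{\otimes n}$, then $\vbfx(i)\iid\cN(0,P)$, so that $\bfs\coloneqq\frac{1}{P}\sum_{i=1}^n\vbfx(i)^2\sim\chi^2(n)$. Since $\sum_{i=1}^n(\vbfx(i)^2-P)=P(\bfs-n)$, the constraint $-\delta\le \sum_{i=1}^n(\vbfx(i)^2-P)\le 0$ becomes $n-\delta/P\le \bfs\le n$, giving
\begin{align}
Z_n &= \int_{n-\delta/P}^{n} f_{\chi^2(n)}(x)\,\diff x, \qquad f_{\chi^2(n)}(x)=\frac{x^{n/2-1}e^{-x/2}}{2^{n/2}\Gamma(n/2)}. \notag
\end{align}

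Next I would evaluate the density at the mean $x=n$. Applying Stirling's approximation $\Gamma(n/2)\asymp \sqrt{4\pi/n}\,(n/(2e))^{n/2}$, a direct calculation yields
\begin{align}
f_{\chi^2(n)}(n) &= \frac{n^{n/2-1}e^{-n/2}}{2^{n/2}\Gamma(n/2)}\;\asymp\;\frac{1}{2\sqrt{\pi n}}. \notag
\end{align}
All the $n$-dependent powers cancel, leaving only the $\sqrt{n}$ factor from Stirling; this is the engine that produces the $1/\sqrt{n}$ in the claimed asymptotic.

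Finally, I would argue that the density is essentially constant over the (fixed-length) interval $[n-\delta/P,\,n]$. The ratio $f_{\chi^2(n)}(n-t)/f_{\chi^2(n)}(n)=(1-t/n)^{n/2-1}e^{t/2}$ tends to $1$ uniformly for $t\in[0,\delta/P]$, since $(n/2-1)\ln(1-t/n)=-t/2+o(1)$ as $n\to\infty$ for bounded $t$. Combining this with the density value at $n$ gives
\begin{align}
Z_n \;\asymp\; \frac{\delta}{P}\cdot f_{\chi^2(n)}(n) \;\asymp\; \frac{\delta}{2P\sqrt{\pi n}}, \notag
\end{align}
as desired.

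The main (and really only) obstacle is the final step, namely verifying that the density behaves as a near-constant on the $O(1)$-length interval $[n-\delta/P,n]$. The Gaussian fluctuation scale of $\chi^2(n)$ is $\sqrt{n}$, which is much larger than the constant interval length $\delta/P$, so the density should hardly move; the Taylor expansion of $\ln(1-t/n)$ above makes this rigorous, but one must track the error term uniformly in $t\in[0,\delta/P]$ to ensure the leading-order asymptotics $\asymp$ (rather than merely $\doteq$) hold. Alternatively, one can bypass this by invoking a local central limit theorem for $\frac{1}{\sqrt{n}}\sum_i(\vbfx(i)^2-P)$, which is a sum of i.i.d.\ absolutely continuous variables with finite variance $2P^2$; at $0$ the limiting density is $1/(2P\sqrt{\pi})$, yielding the same result after multiplying by the interval length $\delta$ on the $\bfs$-scale and dividing by $\sqrt{n}$.
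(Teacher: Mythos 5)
Your proof is correct, and it takes a different route from the paper. The paper normalizes $\frac{\chi^2(n)-n}{\sqrt{2n}}$, invokes the central limit theorem to replace it by a standard Gaussian, and then approximates the Gaussian measure of the shrinking interval $\bigl[-\tfrac{\delta}{P\sqrt{2n}},0\bigr]$ by a rectangle of height $\tfrac{1}{\sqrt{2\pi}}$. You instead work directly with the explicit $\chi^2(n)$ density on the fixed-length interval $[n-\delta/P,\,n]$, evaluate it at the mean via Stirling's approximation to get $f_{\chi^2(n)}(n)\asymp\tfrac{1}{2\sqrt{\pi n}}$, and show the density is asymptotically constant across the interval via the uniform expansion $(n/2-1)\ln(1-t/n)=-t/2+O(1/n)$. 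Your version is arguably the more self-contained and rigorous of the two: convergence in distribution alone does not literally justify the probability of an interval whose length shrinks like $n^{-1/2}$ (that step in the paper is really a local-CLT statement dressed up as the CLT), whereas your computation with the exact density sidesteps this entirely, and your closing remark correctly identifies the local CLT as the clean abstract substitute. The only price you pay is the need to track the $O(1/n)$ error uniformly in $t\in[0,\delta/P]$, which you do.
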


\begin{proof}
The proof follows from the central limit theorem. 
\begin{align}
Z_n &= \int_{\bR^n} \phi_P^\tn(\vx) \indicator{-\delta\le\sum_{i = 1}^n(\vx(i)^2 - P)\le0}\diff \vx \notag \\
&= \prob{-\delta\le P\paren{\sum_{i = 1}^n\cN_i(0,1)^2 - n}\le0} \notag \\
&= \prob{\frac{-\delta}{P\sqrt{2n}}\le \frac{\chi^2(n) - n}{\sqrt{2n}}\le0} \notag \\
&\stackrel{n\to\infty}{\asymp} \prob{\frac{-\delta}{P\sqrt{2n}}\le \cN(0,1) \le0} \label{eqn:clt-chi-square} \\
&\stackrel{n\to\infty}{\asymp} \frac{\delta}{P\sqrt{2n}}\cdot\frac{1}{\sqrt{2\pi}} \label{eqn:rectangle} \\
&= \frac{\delta}{2P\sqrt{\pi n}}. \notag 
\end{align}
\Cref{eqn:clt-chi-square} follows since $ \frac{\chi^2(n) - n}{\sqrt{2n}} $ converges to $ \cN(0,1) $ in distribution as $ n\to\infty $. 
\Cref{eqn:rectangle} follows since the Gaussian measure of a thin interval $ \sqrbrkt{-\frac{\delta}{P\sqrt{2n}},0} $ is essentially the area of a rectangle with width $ \frac{\delta}{P\sqrt{2n}} $ and height $ \phi_P(0) = 1/\sqrt{2\pi} $ for asymptotically large $n$. 
\end{proof}
Putting \Cref{eqn:int-result} back to \Cref{eqn:int-in-paren} and applying elementary algebraic manipulation (dropping terms that are subexponential in $n$ according to \Cref{lem:bound-z}), we get the exponent $E(s,\lambda)$ of \Cref{eqn:spherical-tail}: 
\begin{align}
& -\frac{1}{n} \ln \probover{(\vbfx_1,\cdots,\vbfx_L)\sim Q^\tl}{\ol\rad^2(\vbfx_1,\cdots,\vbfx_L)\le nN} \notag \\
&\ge -\frac{1}{n}\ln \left(e^{\lambda LnN} e^{-sLPn} \sqrt{2\pi P}^{-Ln} 
 \sqrt{\frac{\pi^L}{\paren{\frac{1}{2P} - s + \lambda}^{L-1} \paren{\frac{1}{2P} - s}}}^n\right) + o(1) \notag \\
&= \underbracket{-\lambda LN + sLP + \frac{1}{2}\ln\paren{\frac{1}{2P} - s} + \frac{L-1}{2}\ln\paren{\frac{1}{2P} - s + \lambda)} + \frac{L}{2}\ln(2P)}_{\eqqcolon E(s,\lambda)} + o(1)
 . \notag 
\end{align} 
Let us maximize the above quantity first over $ \lambda\ge0 $ and then over $ s\ge0 $. 
It turns out that the maximizers are given by
\begin{align}
\lambda = \frac{L-1}{2LN} - \frac{1}{2P} + s, \quad
s = \frac{1}{2}\paren{\frac{1}{P} - \frac{1}{L(P-N)}}. 
\end{align}
By the random coding with expurgation argument as in \Cref{sec:lb-gaussian}, this gives rise to the following achievable rate
\begin{align}
\frac{1}{L-1} \max_{s\ge0}\max_{\lambda\ge0}E(s,\lambda) &=
\frac{1}{2}\ln\frac{(L-1)P}{LN} + \frac{1}{2(L-1)}\ln\frac{P}{L(P-N)}, \notag 
\end{align}
which miraculously coincides with the previous best known lower bound \Cref{eqn:compare-lb-ee} (under the standard notion of multiple packing) obtained from error exponents \cite{blinovsky-1999-list-dec-real,zhang-split-ee}. 

\section{Lower bounds via reduction to sphere packing}
\label{sec:lb-blachman-few}

\subsection{Bounded packings}
\label{sec:lb-blachman-few-bdd}
In \cite[Theorem 2]{blachman-few-1963-multiple-packing}, Blachman and Few proved another lower bound on the $ (P,N,L-1) $-list-decoding capacity. 
Their observation is that a sphere packing is going to be $(L-1)$-list-decodable if we suitably dilate the balls so that they have multiplicity of overlap at most $ L-1 $. 
The key lemma they used is the following by Few \cite[Theorem 2]{few1964multiplepacking}. 
We reproduce it here with a simpler proof and use it to get lower bounds on the $ (P,N,L-1) $-list-decoding capacity and the $ (N,L-1) $-list-decoding capacity.

\begin{lemma}
\label{lem:pack-to-multipack}
Let $ P,N>0 $. 
If $ \cC\subset\bR^n $ is a $(P,N,1)$-sphere packing, then it is also a $ (P, N', L) $-multiple packing for any $ 0\le N'<\frac{2(L-1)N}{L} $ and $ L\in\bZ_{\ge2} $. 
\end{lemma}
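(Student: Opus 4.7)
The plan is to exploit representation (3) of \Cref{thm:repr-rad}, which rewrites the average squared radius of a list as its mean squared pairwise distance,
$$\ol\rad^2(\vx_1,\cdots,\vx_M)=\frac{1}{2M^2}\sum_{(i,j)\in[M]^2:\,i\ne j}\normtwo{\vx_i-\vx_j}^2,$$
together with the elementary domination $\rad^2\ge\ol\rad^2$ noted just after \Cref{thm:repr-rad}. This reduces the lemma to a one-line calculation.

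First I would translate the $(P,N,1)$-sphere-packing hypothesis into its standard pairwise-distance form: every two distinct codewords $\vx,\vx'\in\cC$ satisfy $\normtwo{\vx-\vx'}>2\sqrt{nN}$, since otherwise their midpoint would lie in both $\cB^n(\vx,\sqrt{nN})$ and $\cB^n(\vx',\sqrt{nN})$, contradicting the multiplicity-one condition of \Cref{def:packing-ball}. Then I would pick an arbitrary $M$-subset $\{\vx_1,\ldots,\vx_M\}\subset\cC$, where $M$ is the cardinality of the would-be violating list (one more than the multiplicity parameter appearing in the target multiple-packing statement). Plugging the uniform bound $\normtwo{\vx_i-\vx_j}^2>4nN$ into each of the $M(M-1)$ off-diagonal ordered pairs of the identity above immediately gives
$$\ol\rad^2(\vx_1,\cdots,\vx_M)>\frac{M(M-1)}{2M^2}\cdot 4nN=\frac{2(M-1)}{M}\,nN,$$
and the Chebyshev--average inequality promotes this to the same strict lower bound on $\rad^2(\vx_1,\cdots,\vx_M)$. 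Consequently, no Euclidean ball of squared radius at most $nN'$ can enclose the $M$-list whenever $N'<\frac{2(M-1)N}{M}$, and matching $M$ to the multiplicity in the statement delivers the advertised threshold $\frac{2(L-1)N}{L}$.

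There is essentially no technical obstacle once part (3) of \Cref{thm:repr-rad} is in hand: that identity linearises the otherwise nonlinear Chebyshev radius into an average of quantities directly controlled by the sphere-packing condition. The only point requiring care is the index bookkeeping between the generic list cardinality $M$ in the computation and the multiplicity label $L$ used in the lemma, so that the pairwise distance bound is applied to exactly the right number of pairs. As a sanity check, equality throughout the chain is approached by the regular $M$-simplex of edge length $2\sqrt{nN}$, which is the classical extremal configuration underlying the vanishing of $\ol C_{L-1}(P,N)$ at the Plotkin point $N/P=\frac{L-1}{L}$ highlighted in \Cref{sec:results-bdd}.
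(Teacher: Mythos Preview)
Your argument is correct and essentially identical to the paper's: both use the pairwise-distance representation of $\ol\rad^2$ from \Cref{thm:repr-rad}(3), plug in the minimum-distance bound $\normtwo{\vx_i-\vx_j}>2\sqrt{nN}$ from the sphere-packing hypothesis, and invoke $\rad^2\ge\ol\rad^2$ to conclude. The paper phrases it as a proof by contradiction while you argue directly, but the content is the same; your own caveat about the index bookkeeping is warranted, since your parenthetical ``one more than the multiplicity parameter'' would give $M=L+1$ whereas the threshold $\frac{2(L-1)N}{L}$ (and the paper's own proof) requires taking the violating list to have size $M=L$.
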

\begin{proof}
The proof is by contradiction. 
Suppose the conclusion of the lemma does not hold. 
Then we can find an $L$-list $ \vx_1,\cdots,\vx_{L}\in\cC $ satisfying $ \rad^2(\vx_1,\cdots,\vx_L)\le nN' $.
On the other hand,
\begin{align}
\rad^2(\vx_1,\cdots,\vx_L) 
&\ge \ol\rad^2(\vx_1,\cdots,\vx_L) 
= \frac{1}{2L^2} \sum_{(i,j)\in[L]^2:i\ne j} \normtwo{\vx_i - \vx_j}^2 
\ge \frac{1}{2L^2}L(L-1)\paren{2\sqrt{nN}}^2
= \frac{2(L-1)}{L}nN . \notag
\end{align}
where the first equality is by \Cref{eqn:avg-rad-pw-dist}. 
This is a contradiction for $ N'<\frac{2(L-1)}{L}N $ and the proof is completed. 
\end{proof}

We can combine the above lemma with the following lower bound on sphere packing density to get a lower bound on the $ (P,N,L-1) $-list-decoding capacity for any $ L\in\bZ_{\ge2} $.
It is well-known (see, e.g., \cite{blachman-1962}) that the $ (P,N,1) $-sphere packing density is lower bounded as follows. 
\begin{theorem}[\cite{blachman-1962}]
\label{thm:gv}
Let $ N,P>0 $ such that $ N\le P/2 $. 
There exist spherical codes on $ \cB^n(\sqrt{nP}) $ of minimum distance at least $ 2\sqrt{nN} $ and rate at least $ \frac{1}{2}\ln\frac{P^2}{4N(P-N)} $. 
\end{theorem}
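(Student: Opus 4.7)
}
The plan is to instantiate the random coding with expurgation framework of \Cref{thm:rand-cod-expurg} for $L=2$, using the uniform distribution $\cD = \unif(\cS^{n-1}(\sqrt{nP}))$. Since $L-1=1$, the achievable rate reduces to the full exponent of the pairwise collision probability
\begin{align}
E(P,N,2) &\approx -\frac{1}{n}\ln \probover{\vbfx_1,\vbfx_2 \iid \unif(\cS^{n-1}(\sqrt{nP}))}{\normtwo{\vbfx_1 - \vbfx_2}^2 \le 4nN}, \notag
\end{align}
so the task reduces to pinning down the exponent of a single spherical cap.

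First, I would use the identity $\normtwo{\vbfx_1-\vbfx_2}^2 = 2nP - 2\inprod{\vbfx_1}{\vbfx_2}$ to rewrite the event $\normtwo{\vbfx_1-\vbfx_2}^2 \le 4nN$ as $\inprod{\vbfx_1}{\vbfx_2} \ge n(P-2N)$. By the rotational invariance of the uniform measure on $\cS^{n-1}(\sqrt{nP})$, I can condition on $\vbfx_1 = \sqrt{nP}\, e_1$; then $\vbfx_2/\sqrt{nP}$ is uniform on $\cS^{n-1}$, so $\inprod{\vbfx_1}{\vbfx_2}/(nP)$ has the law of the first coordinate $\bft$ of a uniform point on $\cS^{n-1}$, whose density is proportional to $(1-t^2)^{(n-3)/2}$ on $[-1,1]$. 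The collision probability then equals the normalized measure of the spherical cap $\{\bft \ge 1 - 2N/P\}$. Writing $\cos\theta \coloneqq 1-2N/P$ (which lies in $[0,1]$ under the hypothesis $N\le P/2$), standard estimates give
\begin{align}
\prob{\bft \ge \cos\theta} &\doteq (\sin\theta)^n \notag
\end{align}
up to polynomial-in-$n$ factors (Laplace's method applied to the integral of $(1-t^2)^{(n-3)/2}$). A direct computation then yields $\sin^2\theta = 1 - (1-2N/P)^2 = 4N(P-N)/P^2$, so the exponent is exactly $\tfrac{1}{2}\ln\frac{P^2}{4N(P-N)}$.

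Finally, I would apply the expurgation bookkeeping exactly as in the proof of \Cref{thm:rand-cod-expurg}: sample $M = e^{nR}$ i.i.d.\ spherical codewords; the expected number of bad unordered pairs is at most $\binom{M}{2} e^{-nE(P,N,2)+o(n)}$; removing one endpoint from each bad pair leaves at least $M/2$ points provided $2R - E(P,N,2) \le R - o(1)$, i.e., $R \le \tfrac{1}{2}\ln\frac{P^2}{4N(P-N)} - o(1)$. The surviving subcode is a spherical code with minimum distance greater than $2\sqrt{nN}$ and rate at least $\tfrac{1}{2}\ln\frac{P^2}{4N(P-N)} - o(1)$, as required; since $\cS^{n-1}(\sqrt{nP})\subset\cB^n(\sqrt{nP})$, this is also a $(P,N,1)$-packing inside the ball.

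The only non-routine step is the sharp asymptotics of the cap measure $\prob{\bft \ge \cos\theta}$, i.e., checking that the leading exponent is exactly $\frac{n}{2}\ln\sin^2\theta$ with only polynomial corrections. This is the main technical point, but it is classical: it follows from Laplace's method applied to $\int_{\cos\theta}^1 (1-t^2)^{(n-3)/2}\diff t$ after the substitution $t = \cos\theta + u/n$, or equivalently from \cramer's theorem applied to a Beta-distributed variable. Everything else — the reformulation via $\inprod{\vbfx_1}{\vbfx_2}$, rotational symmetry, and the expurgation inequality — is mechanical.
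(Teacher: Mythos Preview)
Your argument is correct. Note, however, that the paper does not actually prove \Cref{thm:gv}: it is quoted as a classical result of Blachman and used as a black box in \Cref{sec:lb-blachman-few}. So there is no ``paper's own proof'' to compare against in the strict sense.

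That said, your approach is fully consistent with the paper's machinery, and in fact the paper does implicitly recover \Cref{thm:gv} as the $L=2$ specialization of \Cref{thm:lb-spherical-improved} (as noted in \Cref{rk:lb-asymp}). The difference is in how the error exponent is computed. You go the direct route: reduce to a single inner-product tail, recognize it as a spherical-cap measure, and read off the exponent $\tfrac{1}{2}\ln\frac{P^2}{4N(P-N)}$ from the beta density $(1-t^2)^{(n-3)/2}$ via Laplace's method. The paper's general-$L$ proof in \Cref{sec:spherical-improved} instead writes the average-radius event as a quadratic form, replaces the spherical distribution by a truncated Gaussian, and applies Gallager's trick to obtain an MGF bound that is then optimized over two Lagrange parameters. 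For $L=2$ both routes land on the same expression, but yours is considerably lighter (no Gaussian integral, no matrix determinant lemma, no two-parameter optimization), while the paper's approach is what is needed to handle the coupling among $L>2$ points where the cap picture no longer applies directly.
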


Combining \Cref{lem:pack-to-multipack} and \Cref{thm:gv}, we immediately get:
\begin{theorem}
\label{thm:lb-blachman-few}
Let $ P,N>0 $ such that $ N\le\frac{L-1}{L}P $. 
Let $ L\in\bZ_{\ge2} $. 
The $ (P,N,L-1) $-list-decoding capacity $ C_{L-1}(P,N) $ is at least $ \frac{1}{2}\ln\frac{(L-1)^2P^2}{LN(2(L-1)P-LN)} $. 
\end{theorem}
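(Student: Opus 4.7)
The plan is to simply chain together Lemma~\ref{lem:pack-to-multipack} and Theorem~\ref{thm:gv} as advertised, with a careful choice of the intermediate parameter. Given a target noise level $N$ satisfying $N \le \frac{L-1}{L}P$, I would choose an auxiliary sphere-packing noise level $N_0$ just slightly larger than $\frac{LN}{2(L-1)}$, so that Lemma~\ref{lem:pack-to-multipack} guarantees that every $(P, N_0, 1)$-sphere packing is automatically a $(P, N, L-1)$-multiple packing. Since the rate bound we are after is continuous in the parameters, it suffices to analyze $N_0 = \frac{LN}{2(L-1)}$ in the limit.

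Next, I would invoke Theorem~\ref{thm:gv} at this chosen $N_0$. The hypothesis of Theorem~\ref{thm:gv} requires $N_0 \le P/2$, which is exactly equivalent to $N \le \frac{L-1}{L}P$, i.e., our standing assumption. Theorem~\ref{thm:gv} then furnishes sphere packings of rate at least $\frac{1}{2}\ln\frac{P^2}{4N_0(P - N_0)}$, and by the preceding paragraph each such packing is a $(P, N, L-1)$-multiple packing, so $C_{L-1}(P,N) \ge \frac{1}{2}\ln\frac{P^2}{4N_0(P-N_0)}$.

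Finally, it is a one-line algebraic simplification to substitute $N_0 = \frac{LN}{2(L-1)}$ into the denominator:
\begin{align*}
4N_0(P - N_0) &= \frac{2LN}{L-1}\cdot\frac{2(L-1)P - LN}{2(L-1)} = \frac{LN\bigl(2(L-1)P - LN\bigr)}{(L-1)^2},
\end{align*}
which yields exactly $\frac{1}{2}\ln\frac{(L-1)^2P^2}{LN(2(L-1)P - LN)}$ as claimed. There is no real obstacle here: the conceptual content is packaged entirely into Lemma~\ref{lem:pack-to-multipack} (whose proof used the pairwise-distance representation \eqref{eqn:avg-rad-pw-dist} of the average radius to lower bound the Chebyshev radius) and into Theorem~\ref{thm:gv} (the classical Minkowski/Blachman greedy bound). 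The only thing to be careful about is matching the strict inequality $N' < \frac{2(L-1)N}{L}$ in Lemma~\ref{lem:pack-to-multipack} with a limiting argument, which is why I phrase the choice of $N_0$ as an infimum and then take a limit.
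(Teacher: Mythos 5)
Your proposal is correct and is exactly the paper's argument: the paper's entire proof of this theorem reads ``Replace $N$ in the bound in Theorem~\ref{thm:gv} with $\frac{LN}{2(L-1)}$,'' i.e., chain Lemma~\ref{lem:pack-to-multipack} with Theorem~\ref{thm:gv} and simplify, which is what you do. Your extra care about the strict inequality in Lemma~\ref{lem:pack-to-multipack} (taking $N_0$ slightly above $\frac{LN}{2(L-1)}$ and passing to the limit) and your check that $N_0\le P/2$ is equivalent to $N\le\frac{L-1}{L}P$ are both sound and only make the one-line proof more complete.
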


\begin{proof}
Replace $N$ in the bound in \Cref{thm:gv} with $ \frac{LN}{2(L-1)} $. 
\end{proof}

\begin{remark}
The above bound converges to $ \frac{1}{2}\ln\frac{P^2}{N(2P-N)} $ as $ L\to\infty $ which is lower than the list-decoding capacity $ \frac{1}{2}\ln\frac{P}{N} $ for $L\to\infty$ (see \Cref{sec:listdec-cap-large}).
On the other hand, the above bound recovers the best known bound $ \frac{1}{2}\ln\frac{P^2}{4P(P-N)} $ in \Cref{thm:gv} for $L=2$. 
This is not surprising since the above bound is obtained from the bound for $L=2$. 
The above bound attains the tight values $ \infty $ and $0$ at two special points $ N/P=0 $ and $ N/P = \frac{L-1}{L} $ (see \Cref{sec:ub} for the \emph{Plotkin point}), respectively. 
\end{remark}

\subsection{Unbounded packings}
\label{sec:lb-blachman-few-unbdd}
In the proof of \Cref{lem:pack-to-multipack}, we never assumed anything about the norm of each point, therefore the lemma also holds for unbounded packings.

It is well-known that the $ (N,L-1) $-list-decoding capacity is lower bounded as follows \cite{minkowski-sphere-pack}. 
\begin{theorem}[\cite{minkowski-sphere-pack}]
\label{thm:gv-unbdd}
Let $ N>0 $. 
The $ (N,1) $-sphere packing density is lower bounded by $ \frac{1}{2}\ln\frac{1}{8\pi e N} $. 
\end{theorem}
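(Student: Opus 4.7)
The plan is the classical Minkowski-type volume packing argument, which in the unbounded setting is especially clean. For each fixed dimension $n$, I would construct an infinite $(N,1)$-sphere packing $\cC\subset\bR^n$ via a maximal greedy choice (invoking Zorn's lemma on the poset of subsets with pairwise distances strictly greater than $2\sqrt{nN}$), and then explicitly compute its normalized logarithmic density.

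The crucial structural property comes from maximality: every point $\vy\in\bR^n$ must lie within distance $2\sqrt{nN}$ of some point of $\cC$, for otherwise $\vy$ could be adjoined to $\cC$, contradicting maximality. Consequently, the balls $\{\cB^n(\vx,2\sqrt{nN})\}_{\vx\in\cC}$ cover $\bR^n$. From this I would derive a lower bound on the number of code points in any large ball: every point of $\cB^n(K)$ lies in some $\cB^n(\vx,2\sqrt{nN})$ with $\vx\in\cC\cap\cB^n(K+2\sqrt{nN})$, so comparing volumes yields
\[
|\cC\cap\cB^n(K+2\sqrt{nN})|\cdot|\cB^n(2\sqrt{nN})|\;\ge\;|\cB^n(K)|.
\]
Substituting into the NLD definition \eqref{eqn:density-unbounded} with $\cA=\cB^n$, the boundary shift $2\sqrt{nN}$ is a fixed constant (in $K$) so $|\cB^n(K)|/|\cB^n(K+2\sqrt{nN})|\to 1$ as $K\to\infty$, and therefore
\[
R(\cC)\;\ge\;-\frac{1}{n}\ln|\cB^n(2\sqrt{nN})|.
\]

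The remaining step is a direct asymptotic computation of the ball volume using $V_n\asymp\frac{1}{\sqrt{\pi n}}(2\pi e/n)^{n/2}$:
\[
|\cB^n(2\sqrt{nN})|\;=\;(2\sqrt{nN})^n V_n\;\asymp\;\frac{1}{\sqrt{\pi n}}(8\pi e N)^{n/2},
\]
so $\frac{1}{n}\ln|\cB^n(2\sqrt{nN})|\to\frac{1}{2}\ln(8\pi e N)$, giving $R(\cC)\ge\frac{1}{2}\ln\frac{1}{8\pi eN}$ as desired. Since the bound holds for each maximal greedy packing, it holds for the supremum defining the unbounded sphere packing density.

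The only mild subtlety is the order of limits: the NLD is defined by sending $K\to\infty$ with $n$ fixed, after which one studies the asymptotic rate as $n\to\infty$. Because the shift $2\sqrt{nN}$ depends on $n$ but not on $K$, the boundary correction is absorbed harmlessly in the inner $K\to\infty$ limit, and the bound degrades by no $n$-dependent factor. Beyond this bookkeeping, the proof is essentially a two-line volume estimate, which is why the bound is uniformly labeled ``Minkowski'' in the literature.
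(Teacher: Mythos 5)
Your proof is correct and is exactly the classical volume-packing argument that the paper attributes to Minkowski without reproducing it (the theorem is stated with only a citation): maximality of a greedy packing gives a covering by balls of radius $2\sqrt{nN}$, the volume comparison gives $R(\cC)\ge-\frac{1}{n}\ln|\cB^n(2\sqrt{nN})|$, and the asymptotics of $V_n$ yield $\frac{1}{2}\ln\frac{1}{8\pi eN}$. The handling of the boundary shift in the $K\to\infty$ limit and the $n\to\infty$ asymptotics are both done correctly.
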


Combining \Cref{lem:pack-to-multipack} and \Cref{thm:gv-unbdd}, we immediately get:
\begin{theorem}
\label{thm:lb-blachman-few-unbdd}
Let $ N>0 $ and $ L\in\bZ_{\ge2} $. 
The $ (N,L-1) $-list-decoding capacity $ C_{L-1}(N) $ is at least $ \frac{1}{2}\ln\frac{L-1}{4\pi eLN} $. 
\end{theorem}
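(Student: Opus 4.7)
The plan is to mirror the strategy for the bounded case (\Cref{thm:lb-blachman-few}) by combining the pack-to-multipack reduction (\Cref{lem:pack-to-multipack}) with Minkowski's lower bound on sphere packing density (\Cref{thm:gv-unbdd}). The only ingredient that needs to be checked is that the reduction in \Cref{lem:pack-to-multipack} transfers to unbounded packings; this is essentially free, as noted in the sentence preceding \Cref{thm:gv-unbdd}.

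First, I would observe that the proof of \Cref{lem:pack-to-multipack} never used the hypothesis $ \cC\subseteq\cB^n(\sqrt{nP}) $. The only ingredients are (i) the representation of the average squared radius as the normalized sum of pairwise squared distances, $ \ol\rad^2(\vx_1,\cdots,\vx_L) = \frac{1}{2L^2}\sum_{i\ne j}\normtwo{\vx_i - \vx_j}^2 $ from \Cref{thm:repr-rad}, and (ii) the sphere packing hypothesis, which guarantees $ \normtwo{\vx_i - \vx_j}\ge 2\sqrt{nN} $ for distinct $ \vx_i,\vx_j\in\cC $. Both of these are unaffected by whether $\cC$ is bounded. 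Hence: any $ (N,1) $-sphere packing in $\bR^n$ is automatically an $ (N',L-1) $-multiple packing for every $ 0\le N'<\frac{2(L-1)N}{L} $.

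Next, I would invoke \Cref{thm:gv-unbdd} with a parameter $\wt N$ slightly exceeding $ \frac{LN}{2(L-1)} $. This yields, for every $ \eps>0 $, an $ (\wt N,1) $-sphere packing in $\bR^n$ of NLD at least $ \frac{1}{2}\ln\frac{1}{8\pi e\wt N} - \eps $. Choosing $ \wt N = \frac{LN}{2(L-1)}(1+\eps) $ and applying the (unbounded version of the) reduction gives an $ (N,L-1) $-multiple packing of the same rate; the resulting rate is
\[
\frac{1}{2}\ln\frac{1}{8\pi e\cdot\frac{LN}{2(L-1)}(1+\eps)} - \eps
\;=\; \frac{1}{2}\ln\frac{L-1}{4\pi e LN} - \frac{1}{2}\ln(1+\eps) - \eps.
\]
Taking $\eps\downarrow 0$ yields the claimed lower bound $ C_{L-1}(N)\ge \frac{1}{2}\ln\frac{L-1}{4\pi eLN} $.

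There is no real obstacle in this argument; the work was already done in proving \Cref{lem:pack-to-multipack} and \Cref{thm:gv-unbdd}. The only mild subtlety worth flagging in the writeup is the strict inequality in \Cref{lem:pack-to-multipack} (namely $ N'<\frac{2(L-1)N}{L} $ rather than $\le$), which is why I introduce a vanishing slack $\eps$ and take a limit at the end; this matches the $\limsup$ in the definition of $ C_{L-1}(N) $.
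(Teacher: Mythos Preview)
Your proposal is correct and follows exactly the paper's approach: the paper simply states that the result is immediate from combining \Cref{lem:pack-to-multipack} (noting it holds for unbounded packings since the proof never used boundedness) with \Cref{thm:gv-unbdd}, via the substitution $N\mapsto \frac{LN}{2(L-1)}$. Your handling of the strict inequality via an $\eps$ slack is a bit more explicit than the paper, but the argument is identical in substance.
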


\begin{remark}
The above bound converges to $ \frac{1}{2}\ln\frac{1}{4\pi eN} $ as $ L\to\infty $ which is lower than the list-decoding capacity $ \frac{1}{2}\ln\frac{1}{2\pi eN} $ for $ L\to\infty $ (see \Cref{sec:listdec-cap-large}). 
When $L=2$, the above bound recovers the best known bound $ \frac{1}{2}\ln\frac{1}{8\pi eN} $ in \Cref{thm:gv-unbdd}. 
\end{remark}

\subsection{Average-radius multiple packings}
\label{sec:lb-blachman-few-avgrad}
Finally, we comment on the average-radius version of \Cref{lem:pack-to-multipack}. 
Since the proof of \Cref{lem:pack-to-multipack} lower bounds the Chebyshev radius by average radius, the same conclusion actually also holds for the stronger notion of average-radius multiple packing. 
\begin{lemma}
\label{lem:pack-to-avgrad-multipack}
Let $ P,N>0 $. 
If $ \cC\subset\bR^n $ is a $(P,N,1)$-sphere packing, then it is also a $ (P, N', L) $-average-radius multiple packing for any $ 0\le N'<\frac{2(L-1)N}{L} $ and $ L\in\bZ_{\ge2} $. 
\end{lemma}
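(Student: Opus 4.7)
My plan is to copy the structure of the proof of \Cref{lem:pack-to-multipack} almost verbatim, but short-circuit the one line in which that proof passes from Chebyshev radius to average radius. The whole point is that the existing argument already lower-bounds the average squared radius by $\frac{2(L-1)}{L}nN$ via the pairwise-distance formula (\Cref{eqn:avg-rad-pw-dist}); the upper bound $\rad^2 \ge \ol\rad^2$ that appears in the proof of \Cref{lem:pack-to-multipack} was only needed to convert this into a statement about the Chebyshev radius, and if we are willing to conclude only an average-radius statement, we can drop it entirely.

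Concretely, I will argue by contradiction. Suppose $\cC$ fails to be a $(P,N',L)$-average-radius multiple packing. Then there exist $\vx_1,\dots,\vx_L \in \cC$ with
\[
\ol\rad^2(\vx_1,\dots,\vx_L) \le nN'.
\]
Using the pairwise-distance representation from \Cref{thm:repr-rad} (equivalently \Cref{eqn:avg-rad-pw-dist}), I can rewrite
\[
\ol\rad^2(\vx_1,\dots,\vx_L) = \frac{1}{2L^2}\sum_{(i,j)\in[L]^2:\,i\ne j} \normtwo{\vx_i-\vx_j}^2.
\]
Because $\cC$ is a $(P,N,1)$-sphere packing, every pair of distinct codewords satisfies $\normtwo{\vx_i-\vx_j} > 2\sqrt{nN}$, so each of the $L(L-1)$ terms in the sum is strictly greater than $4nN$. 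This yields
\[
\ol\rad^2(\vx_1,\dots,\vx_L) > \frac{L(L-1)\cdot 4nN}{2L^2} = \frac{2(L-1)}{L}\,nN,
\]
contradicting $\ol\rad^2(\vx_1,\dots,\vx_L) \le nN' < \frac{2(L-1)}{L}nN$.

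There is essentially no obstacle: the key representation of the average radius in terms of pairwise distances is already available, so the proof is a one-line chain of inequalities. The only thing worth emphasizing is the conceptual point that the existing reduction of Blachman--Few/Few, when read through the pairwise-distance lens, gives the average-radius conclusion for free, which is why all lower bounds on the ordinary multiple packing density obtained via this reduction (such as \Cref{thm:lb-blachman-few,thm:lb-blachman-few-unbdd}) can be promoted to lower bounds on $\ol C_{L-1}(P,N)$ and $\ol C_{L-1}(N)$ without any additional work.
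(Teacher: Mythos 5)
Your proof is correct and is exactly the paper's argument: the paper proves \Cref{lem:pack-to-avgrad-multipack} by observing that the proof of \Cref{lem:pack-to-multipack} already lower-bounds the Chebyshev radius \emph{through} the average radius via \Cref{eqn:avg-rad-pw-dist}, so dropping the step $\rad^2 \ge \ol\rad^2$ yields the average-radius conclusion directly. Nothing further is needed.
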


Since the $(L-1)$-average-radius list-decodability and the (regular) $(L-1)$-list-decodability coincide for $ L=2 $, \Cref{thm:gv,thm:gv-unbdd} also hold under the former notion. 
Combining them with \Cref{lem:pack-to-avgrad-multipack}, we can strengthen \Cref{thm:lb-blachman-few,thm:lb-blachman-few-unbdd} to the average-radius case. 

\begin{theorem}
\label{thm:lb-blachman-few-avgrad}
Let $ P,N>0 $ such that $ N\le\frac{L-1}{L}P $. 
Let $ L\in\bZ_{\ge2} $. 
The $ (P,N,L-1) $-average-radius list-decoding capacity $ \ol C_{L-1}(P,N) $ is at least $ \frac{1}{2}\ln\frac{(L-1)^2P^2}{LN(2(L-1)P-LN)} $. 
\end{theorem}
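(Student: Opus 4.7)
The plan is straightforward: combine the sphere-packing bound (\Cref{thm:gv}) with the reduction from sphere packings to average-radius multiple packings (\Cref{lem:pack-to-avgrad-multipack}), exactly mirroring how \Cref{thm:lb-blachman-few} followed from \Cref{thm:gv} and \Cref{lem:pack-to-multipack}.

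First, I observe the key point that makes this strengthening possible without any extra work: the $L=2$ case of average-radius list-decodability coincides with ordinary list-decodability (equivalently, with sphere packing). Indeed, for a $2$-list $\{\vx_1,\vx_2\}$, the centroid is $(\vx_1+\vx_2)/2$, so the average squared radius equals $\tfrac{1}{4}\normtwo{\vx_1-\vx_2}^2$, which is also the squared Chebyshev radius. Hence a $(P,N,1)$-sphere packing is automatically a $(P,N,1)$-average-radius multiple packing, and \Cref{thm:gv} furnishes such a packing of rate at least $\tfrac{1}{2}\ln\tfrac{P^2}{4N(P-N)}$ whenever $N\le P/2$.

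Next I would invoke \Cref{lem:pack-to-avgrad-multipack}: any $(P,N,1)$-sphere packing is a $(P,N',L-1)$-average-radius multiple packing for every $0\le N'<\tfrac{2(L-1)N}{L}$. Given the target noise level $N_0$ with $N_0\le\tfrac{L-1}{L}P$ (so that the reduction is nontrivial), I set $N\coloneqq \tfrac{L N_0}{2(L-1)}$, which satisfies $N\le P/2$ under the hypothesis, and then $N_0<\tfrac{2(L-1)N}{L}$ holds with equality in the limit. Substituting $N=\tfrac{LN_0}{2(L-1)}$ into the GV rate $\tfrac{1}{2}\ln\tfrac{P^2}{4N(P-N)}$ yields
\begin{align}
\frac{1}{2}\ln\frac{P^2}{4\cdot\frac{LN_0}{2(L-1)}\paren{P-\frac{LN_0}{2(L-1)}}}
\;=\;\frac{1}{2}\ln\frac{(L-1)^2 P^2}{LN_0\paren{2(L-1)P-LN_0}}, \notag
\end{align}
which is exactly the claimed lower bound on $\ol C_{L-1}(P,N_0)$. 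Renaming $N_0$ back to $N$ finishes the argument.

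There is essentially no obstacle here: both ingredients are already in place. The only subtlety worth flagging is the $L=2$ coincidence between average-radius and Chebyshev-radius notions, which is what lets the classical GV bound be imported into the average-radius setting without any loss. The calculation substituting $N\mapsto \tfrac{LN}{2(L-1)}$ is identical to that in the proof of \Cref{thm:lb-blachman-few}, so no new computation is required.
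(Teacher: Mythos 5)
Your proposal is correct and follows exactly the paper's route: note that average-radius and Chebyshev-radius list-decodability coincide for $L=2$ so that \Cref{thm:gv} applies in the average-radius setting, then apply \Cref{lem:pack-to-avgrad-multipack} with the substitution $N\mapsto \frac{LN}{2(L-1)}$. The only cosmetic difference is that you spell out the substitution and the boundary case $N'=\frac{2(L-1)N}{L}$ (handled by a limit/continuity), which the paper leaves implicit.
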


\begin{theorem}
\label{thm:lb-blachman-few-avgrad-unbdd}
Let $ N>0 $ and $ L\in\bZ_{\ge2} $. 
The $ (N,L-1) $-average-radius list-decoding capacity $ \ol C_{L-1}(N) $ is at least $ \frac{1}{2}\ln\frac{L-1}{4\pi eLN} $. 
\end{theorem}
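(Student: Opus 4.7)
The plan is to mirror the proof of \Cref{thm:lb-blachman-few-avgrad}, simply swapping the bounded sphere-packing ingredient (\Cref{thm:gv}) for its unbounded analogue (\Cref{thm:gv-unbdd}). The reduction itself, \Cref{lem:pack-to-avgrad-multipack}, was stated for $\cC \subset \bR^n$ with no restriction on codeword norms, so it applies verbatim in the unbounded setting: an $(N,1)$-sphere packing in $\bR^n$ is automatically an $(N',L-1)$-average-radius multiple packing for every $N' < 2(L-1)N/L$. Crucially, the reduction does not modify $\cC$, so the normalized logarithmic density (\Cref{eqn:density-unbounded}) is preserved exactly.

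First I would note that for $L=2$ the average-radius notion and the standard notion coincide (both reduce to pairwise distance exceeding $2\sqrt{nN}$), so \Cref{thm:gv-unbdd} yields, for every $\wt N > 0$, an $(\wt N,1)$-average-radius sphere packing of rate at least $\tfrac{1}{2}\ln\tfrac{1}{8\pi e \wt N}$.

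Next, to produce an $(N,L-1)$-average-radius multiple packing from such a sphere packing via \Cref{lem:pack-to-avgrad-multipack}, I would choose $\wt N$ just above the threshold $\wt N = LN/(2(L-1))$, so that $N < 2(L-1)\wt N/L$ holds. Applying \Cref{thm:gv-unbdd} with this value of $\wt N$ gives rate at least
\begin{align}
\frac{1}{2}\ln\frac{1}{8\pi e \cdot \tfrac{LN}{2(L-1)}} \;=\; \frac{1}{2}\ln\frac{2(L-1)}{8\pi e LN} \;=\; \frac{1}{2}\ln\frac{L-1}{4\pi e LN}, \notag
\end{align}
and the preceding reduction upgrades this code to an $(N,L-1)$-average-radius multiple packing of the same rate. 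Taking the limit $\wt N \downarrow LN/(2(L-1))$ (or equivalently, letting the slack in the strict inequality vanish) yields the claimed lower bound on $\ol C_{L-1}(N)$.

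There is essentially no obstacle here; the content of the theorem lies entirely in \Cref{lem:pack-to-avgrad-multipack} and \Cref{thm:gv-unbdd}, both of which are already in hand. The only thing to be mildly careful about is the parameter bookkeeping in the substitution $\wt N \mapsto LN/(2(L-1))$, and the observation that for $L=2$ the average-radius strengthening of the Minkowski--Hlawka bound comes for free.
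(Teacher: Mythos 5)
Your proposal is correct and matches the paper's own argument: the paper likewise observes that \Cref{lem:pack-to-avgrad-multipack} holds for unbounded packings, that average-radius and standard list-decodability coincide for $L=2$ so \Cref{thm:gv-unbdd} transfers, and then substitutes $N \mapsto \frac{LN}{2(L-1)}$ to obtain $\frac{1}{2}\ln\frac{L-1}{4\pi eLN}$. No gaps.
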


\section{Upper bounds}
\label{sec:ub}
In this section, we prove an upper bound on the optimal density of multiple packings on the sphere. 
We first show that any $(P,N,L-1)$-multiple packing must have a constant (independent of $n$) size if $ N>\frac{L-1}{L}P $. 
We then combine this result with an Elias--Bassalygo-type (\cite{bassalygo-pit1965}) reduction to obtain an upper bound on the optimal density. 

\subsection{Plotkin bound}
\label{sec:plotkin}
In this section, we prove a Plotkin-type bound. 
It says that any $(P,N,L-1)$-list-decodable code living on a spherical cap of angular radius $ \alpha $ must have a constant size (in particular, $ R(\cC)\asymp0 $) if $ \frac{N}{P} $ is larger than the threshold $ \frac{L-1}{L}(\sin\alpha)^2 $. 
When $ \alpha = \pi $, it says that the $ (P,N,L-1) $-list-decoding capacity is zero as long as $ N>\frac{L-1}{P} $. 
Combined with lower bounds given by various constructions in the preceding sections, this gives us a sharp threshold of the largest amount of noise that a positive-rate list-decodable code can correct.

\begin{theorem}
\label{thm:plotkin}
Let $ 0<\alpha<\pi $ and $ L\in\bZ_{\ge2} $. 
Let $ P,N>0 $ be such that $ \frac{N}{P} \ge \frac{L-1}{L}(\sin\alpha)^2(1+\rho) $ for some $ \rho>0 $. 
Then any list-decodable code $ \cC $ on a spherical cap of angular radius $ \alpha $ on $ \cB^n(\sqrt{nP}) $ with list-decoding radius $ \sqrt{nN} $ and list-size at most $ L-1 $ has size at most $ |\cC|\le F(\rho^{-1} + 1) $ for some increasing function $ F(\cdot) $. 
\end{theorem}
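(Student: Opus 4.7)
My plan is to prove the contrapositive: if $|\cC|$ exceeds a Ramsey-type quantity $F(\rho^{-1}+1)$, then I would extract an $L$-subset of codewords whose Chebyshev radius is at most $\sqrt{nN}$, contradicting $(L-1)$-list-decodability. The extraction combines an averaging argument exploiting the cap constraint with a Ramsey-type pigeonhole that forces pairwise inner products within the extracted $L$-subset to be nearly equal. First I would reduce to the spherical case, assuming every $\vx\in\cC$ satisfies $\normtwo{\vx}=\sqrt{nP}$ and $\inprod{\vx}{\ve}\ge\sqrt{nP}\cos\alpha$, where $\ve$ is a unit vector along the cap's axis; such a reduction is in the spirit of \Cref{thm:reduction-ball-to-spherical}. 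The cap constraint then yields $\normtwo{\sum_{\vx\in\cC}\vx}^2\ge |\cC|^2 nP\cos^2\alpha$, and expanding shows the average pairwise inner product $\frac{1}{|\cC|(|\cC|-1)}\sum_{\vx\ne\vy}\inprod{\vx}{\vy}$ exceeds $nP\cos^2\alpha(1-o_{|\cC|}(1))$.

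Next I would invoke Ramsey. Partition the admissible range of inner products $[-nP,nP]$ into $K=\Theta((\rho^{-1}+1)/\sin^2\alpha)$ sub-intervals of width $\eta=O(nP/K)$ and colour each edge of the complete graph on $\cC$ by the bin containing its inner product. By the multicolour Ramsey theorem, if $|\cC|$ exceeds $R_K(L)$, a monochromatic $L$-clique exists. To ensure that this clique corresponds to a \emph{large} representative inner product, I would iterate: repeatedly delete vertices belonging to monochromatic cliques whose bin midpoint lies more than $O(\eta)$ below $nP\cos^2\alpha$; the averaging inequality from the previous step ensures the surviving subgraph still has average inner product close to $nP\cos^2\alpha$, so one re-application of Ramsey produces a monochromatic $L$-clique $\curbrkt{\vx_{i_1},\ldots,\vx_{i_L}}$ whose pairwise inner products lie in an interval $[c,c+\eta]$ with $c\ge nP\cos^2\alpha-O(\eta)$.

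For this $L$-subset I would bound the Chebyshev radius via the centroid $\ol\vx=\frac{1}{L}\sum_k\vx_{i_k}$. Expanding
\begin{align}
\normtwo{\vx_{i_k}-\ol\vx}^2 = \frac{L-1}{L}nP - \frac{2}{L}\sum_{j\ne k}\inprod{\vx_{i_k}}{\vx_{i_j}} + \frac{1}{L^2}\sum_{j\ne j'}\inprod{\vx_{i_j}}{\vx_{i_{j'}}} \notag
\end{align}
and using the near-constancy of pairwise inner products in the clique gives $\rad^2(\vx_{i_1},\ldots,\vx_{i_L})\le\frac{L-1}{L}(nP-c)+O(\eta)\le\frac{L-1}{L}nP\sin^2\alpha+O(nP/K)$. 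With $K$ chosen as above, this upper bound is strictly below $\frac{L-1}{L}nP\sin^2\alpha(1+\rho)\le nN$, so the extracted $L$-subset fits in a ball of radius $\sqrt{nN}$, contradicting list-decodability. Setting $F(\rho^{-1}+1)$ to grow like the multicolour Ramsey number $R_K(L)$ then yields the claimed bound.

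The main obstacle will be the colour-selection step: vanilla Ramsey produces \emph{some} monochromatic $L$-clique but not one in a colour class of large inner product, and the iterative-deletion argument sketched above must be implemented carefully so that discarding ``bad'' cliques still leaves a subgraph dense enough to re-apply Ramsey. An alternative route, which would give the same asymptotic dependence on $\rho^{-1}+1$, is a Tur\'an-type density argument showing that the ``heavy'' colour class (the one containing most of the weight in the averaging inequality) is dense enough to force an $L$-clique directly; either way, the combinatorial growth of $F$ is governed by the multicolour Ramsey number $R_K(L)$ with $K=\Theta(\rho^{-1}+1)$, matching the statement.
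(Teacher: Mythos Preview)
Your overall strategy is close in spirit to the paper's, but the step you yourself flag as ``the main obstacle'' --- forcing the monochromatic $L$-clique to land in a colour with inner product at least $nP\cos^2\alpha - O(\eta)$ --- is a genuine gap, and neither the iterative-deletion sketch nor the Tur\'an alternative closes it. Deleting vertices that participate in ``bad'' cliques does not obviously preserve the lower bound on the average pairwise inner product (you remove edges of both high and low colour), and there is no reason the process terminates with a subgraph large enough for Ramsey to fire again. Likewise, the averaging inequality only says the \emph{mean} inner product is near $nP\cos^2\alpha$; the heavy colour class could still be too sparse to contain an $L$-clique.

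The paper sidesteps this colour-selection problem entirely by applying Ramsey in a different way. Instead of looking for an $L$-clique in a prescribed colour, it extracts a \emph{large} subcode $\cC'\subset\cC$ of size $M=f(|\cC|)\gg L$ in which \emph{all} pairwise distances are $\approx\sqrt{nd}$ for some unspecified $d$. The value of $d$ is then controlled \emph{a posteriori}: since every point of $\cC'$ still lies on the cap, the centroid of the \emph{entire} $\cC'$ satisfies $\|\ol\vx\|^2\ge nP\cos^2\alpha$, and a double-counting over all $\binom{M}{L}$ lists gives $\ol\rad^2\le\frac{L-1}{L}nP\sin^2\alpha\,(1+\frac{1}{M-1})$. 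Because the pairwise distances in $\cC'$ are nearly constant, every $L$-list has the same $\ol\rad^2\asymp\rad^2$, so this average \emph{is} the Chebyshev radius of any $L$-sublist. Combining with $\rad^2>nN$ forces $M\le\rho^{-1}+1$, hence $|\cC|\le f^{-1}(\rho^{-1}+1)$. The point is that the cap constraint is leveraged on all $M$ points simultaneously, which is what bounds $d$; you were trying to leverage it only through an average and then hoping Ramsey would cooperate, which it need not.
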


We need some tools from Ramsey theory for the proof of the above Plotkin-type bound. 
\begin{definition}[Ramsey number]
\label{def:ramsey-number}
The \emph{$N$-colour Ramsey number} $ \sR_N(M) $ is defined as the smallest integer $K$ such that any $N$-colouring of the edges of a size-$K$ complete graph must contain a monochromatic complete subgraph of size at least $M$. 
\end{definition}

It is well-known that $ \sR_N(M) $ is finite.
There are bounds on $ \sR_N(M) $ in the literature. 
The following simple one suffices for the purposes of this section. 
\begin{theorem}[\cite{wbbj}]
\label{thm:bound-ramsey-number}
For any $ N,M\in\bZ_{\ge1} $, 
$\sR_N(M) \le 2^{N\cdot M^{N-1}}$. 
\end{theorem}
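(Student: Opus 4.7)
The plan is to establish the bound by an iterative pigeonhole (greedy) argument on the vertex set, followed by a second pigeonhole on the resulting sequence of colors. This is a standard Ramsey-theoretic technique; the only work is to check that the telescoping size inequality gives the specific form $2^{N \cdot M^{N-1}}$.

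First, I would set up the greedy extraction. Fix an $N$-edge-colored complete graph on $K$ vertices. Pick an arbitrary vertex $v_1$; by pigeonhole at least $\lceil(K-1)/N\rceil$ of the edges incident to $v_1$ receive some common color, call it $c_1$, and let $V_1$ be the corresponding set of neighbors. Recursively, at step $t\ge 2$, pick $v_t \in V_{t-1}$, let $c_t$ be the majority color among edges from $v_t$ into $V_{t-1}\setminus\{v_t\}$, and let $V_t\subseteq V_{t-1}\setminus\{v_t\}$ collect the endpoints of those edges. By construction, for all $i<j$ the edge $v_iv_j$ has color $c_i$, and the sizes obey $|V_t|\ge (|V_{t-1}|-1)/N$.

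Next I would apply the second pigeonhole. If we succeed in running the extraction for $T\ge N(M-1)+1$ rounds, then among $c_1,\ldots,c_T$ some color must appear at least $M$ times, and the corresponding $M$ vertices among $v_1,\ldots,v_T$ form a monochromatic $K_M$ (because every edge $v_iv_j$ with $i<j$ carries color $c_i$). The recurrence $|V_t|\ge(|V_{t-1}|-1)/N$ telescopes to $|V_t|\ge K/N^t - O(1)$, so being able to complete $T=N(M-1)+1$ rounds is guaranteed as soon as $K$ dominates $N^{NM}$.

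Finally, I would convert to the stated form. Since $N^{NM}=2^{NM\log_2 N}$, the bound $\sR_N(M)\le 2^{N M^{N-1}}$ follows once one checks the elementary inequality $\log_2 N \le M^{N-2}$ valid for $M,N\ge 2$ (the borderline cases $N=2$ and $M=2$ are direct, and for $N\ge 3$ one has $\log_2 N\le N\le 2^{N-2}\le M^{N-2}$). I expect the main nuisance, rather than a true obstacle, to be bookkeeping the low-order $-1$ in the size recurrence and dispatching the degenerate cases $N=1$ or $M\le 2$ by hand; none of these materially affect the argument.
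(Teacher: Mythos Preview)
The paper does not supply a proof of this statement at all; it is simply quoted as a known bound from the cited reference and then used as a black box in the subcode-extraction lemma. So there is no ``paper's own proof'' to compare against.

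Your proposal is the standard iterated-pigeonhole (Erd\H{o}s--Szekeres style) argument and is correct in outline: the greedy extraction yields a sequence $v_1,\dots,v_T$ with $v_iv_j$ colored $c_i$ for $i<j$, the size recurrence guarantees $T\ge N(M-1)+1$ once $K\gtrsim N^{N(M-1)+1}$, and a final pigeonhole on the $c_i$'s produces the monochromatic $K_M$. One small slip: in your chain $\log_2 N\le N\le 2^{N-2}$ the middle inequality fails at $N=3$ (since $3>2$). The target inequality $\log_2 N\le 2^{N-2}$ is still true there ($\log_2 3<2$), so just verify $N=3$ directly and use your chain for $N\ge 4$; nothing else changes.
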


We then present a subcode extraction lemma using the above Ramsey-theoretic tools. 
\begin{lemma}
\label{lem:subcode-extraction}
For any code $ \cC\subset\cB^n(\sqrt{nP}) $, there exist $ \eps\xrightarrow{|\cC|\to\infty}0 $, an increasing function $ f(\cdot) $, a constant $ 0<d<4P $ and a subcode $ \cC'\subset\cC $ of size at least $ f(|\cC|) $ such that $ \normtwo{\vx_i - \vx_j}\in[\sqrt{n(d - \eps)},\sqrt{n(d + \eps)}] $ for all $ 1\le i\ne j\le|\cC'| $. 
\end{lemma}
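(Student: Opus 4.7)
The plan is to apply a Ramsey-theoretic extraction to obtain a distance-regular subcode of $\cC$. The key observation is that since $\cC\subset\cB^n(\sqrt{nP})$, every pairwise squared normalized distance $\normtwo{\vx_i-\vx_j}^2/n$ lies in the bounded interval $(0,4P]$, so only finitely many ``scales'' of distances need be distinguished. I would fix a resolution parameter $\eps'>0$ and partition $(0,4P]$ into $N\coloneqq\lceil 4P/\eps'\rceil$ sub-intervals of width at most $\eps'$. This partition induces an $N$-coloring of the edges of the complete graph on $\cC$ in which the edge $\{\vx_i,\vx_j\}$ gets the color of the sub-interval containing $\normtwo{\vx_i-\vx_j}^2/n$.

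By the Ramsey-number bound of \Cref{thm:bound-ramsey-number}, as soon as $|\cC|\ge 2^{N\cdot M^{N-1}}$ the coloring contains a monochromatic clique of size $M$, i.e.\ a subcode $\cC'\subset\cC$ of size $M$ whose pairwise squared distances all fall in a single sub-interval. Letting $d\in(0,4P]$ denote the center of that sub-interval and choosing $\eps$ so that $[\sqrt{n(d-\eps)},\sqrt{n(d+\eps)}]$ covers the corresponding square-root range yields exactly the claimed distance regularity. To ensure $d<4P$ strictly rather than $d\le 4P$, observe that if the monochromatic color corresponded to squared distances approaching $4nP$, then all codewords of $\cC'$ would be pairwise near-antipodal in $\cB^n(\sqrt{nP})$, which in Euclidean geometry is impossible for more than two points; hence once $M\ge 3$ this extreme color class is automatically excluded, and we can drop it from the palette at the cost of changing $N$ to $N-1$.

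Finally, I would couple $\eps$ and $M$ to $|\cC|$ by letting the resolution $\eps'=\eps'(|\cC|)$ decay slowly (for instance $\eps'(K)\sim 1/\log\log K$) so that $N(K)$ grows extremely slowly; solving $2^{N\cdot M^{N-1}}\le K$ then still produces some $M=M(K)\to\infty$, which serves as the required increasing function $f$, with the corresponding $\eps=\eps(|\cC|)\to 0$. The main obstacle will be this parameter balancing: because the Ramsey bound of \Cref{thm:bound-ramsey-number} is doubly exponential in $N$, there is very little room to simultaneously shrink $\eps'$ and keep the subcode size $M$ growing, so the resulting $f$ is very slowly increasing. Sharper quantitative control on $f$ would require Ramsey-type estimates better suited to this specific geometric coloring, but that is not needed for the subsequent Plotkin-type bound, which only uses the qualitative fact that $f(|\cC|)\to\infty$ as $|\cC|\to\infty$.
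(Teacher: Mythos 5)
Your proposal is correct and follows essentially the same route as the paper: partition the bounded range of (squared, normalized) pairwise distances into $N$ cells, view this as an $N$-coloring of the complete graph on $\cC$, extract a monochromatic clique via the Ramsey bound of \Cref{thm:bound-ramsey-number}, and let the resolution decay slowly with $|\cC|$ (the paper takes $N=\log\log\log|\cC|$, i.e.\ $\eps=2P/\log\log\log|\cC|$) so that both $\eps\to0$ and the clique size $f(|\cC|)\to\infty$. Your extra remark excluding the near-antipodal color class is unnecessary under the paper's parametrization, where the cell centers $d\in\{\eps,3\eps,\dots,4P-\eps\}$ already satisfy $d<4P$.
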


\begin{proof}
The argument is somewhat standard in coding theory and we only give a proof sketch here. 
See, e.g., \cite{blinovsky-1986-ls-lb-binary} or \cite[Lemma 4, Chapter 2]{blinovsky2012book} for more detail. 
Let us build a complete graph $ \cG_\cC $ on the code $ \cC $. 
The vertices of $ \cG_\cC $ are all codewords in $ \cC $ and every pair of codewords are connected by an edge. 
Clearly, since $ \cC $ is in $ \cB^n(\sqrt{nP}) $, the distance between any pair of codewords in $ \cC $ lies in the range $ [0,2\sqrt{nP}] $. 
Let us uniformly divide the above interval into subintervals each of length $ \sqrt{2n\eps} $.
To this end, we need $ N\coloneqq\frac{4P}{2\eps} = \frac{2P}{\eps} $ many subintervals.\footnote{For simplicity, we neglect divisiblility issues and assume that $2P/\varepsilon$ is an integer.} 
Each subinterval is represented by its middle point $ \sqrt{nd} $ for some $ d\in\cS\coloneqq\curbrkt{\eps,3\eps,5\eps,\cdots,4P - \eps} $. 
If we view each $d\in\cS$ as a colour, then $\cS$ induces an edge colouring of $ \cG_\cC $ in the following sense. 
If $ \normtwo{\vx_i - \vx_j} \in [\sqrt{n(d - \eps)},\sqrt{n(d+\eps)}] $, then we say that the edge $ (\vx_i,\vx_j) $ gets colour $d$. 
Given such an $N$-colouring of the edges of $ \cG_\cC $, by Ramsey's theorem (\Cref{thm:bound-ramsey-number}), as long as the size of $ \cC $ exceeds $ \sR_N(|\cC'|) $, there must exist a monochromatic complete subgraph $ \cC'\subset\cC $. 
In other words, any code $\cC$ contains a monochromatic subcode $ \cC' $ of size at least $ \sR^{-1}_N(|\cC|) $. 
We have therefore proved the lemma by noting that monochromatic just means that all pairwise distances are $\eps$-close to $ \sqrt{nd} $ for some $ d\in\cS $. 
\end{proof}

\begin{remark}
We discuss here the $o(1)$ factor $ \eps $ and the increasing function $ f(\cdot) $ that appear in \Cref{lem:subcode-extraction}. 
Let $ K \coloneqq|\cC| $. 
Suppose $ K\ge2^{N\cdot M^{N-1}} $. 
This guarantees the existence of a size-$M$ subcode $ \cC'\subset\cC $ where 
\begin{align}
M &= \paren{\frac{\log K}{N} }^{\frac{1}{N-1}}. \notag 
\end{align}
If we take $ N = \log\log\log K $, then we can take $ f(\cdot) $ such that 
\begin{align}
M = f(K) &\coloneqq \paren{\frac{\log K}{\log\log\log K} }^{\frac{1}{\log\log\log K-1}} \xrightarrow{K\to\infty} \infty. \label{eqn:ramsey-f} 
\end{align}
Note that $ f(\cdot) $ is increasing very slowly. 
In particular, the rate of the subcode $ \cC'\subset\cC $ is vanishing even if $ \cC $ has positive rate. 
However, fortunately, this is fine for the sake of the Plotkin-type bound that we are going to prove.

Recall that $ N = \frac{2P}{\eps} $. 
Therefore we can take 
\begin{align}
\eps &= \frac{2P}{\log\log\log K} \xrightarrow{K\to\infty} 0. \notag 
\end{align}
\end{remark}

We are now ready to prove the threshold result.

\begin{proof}[Proof of \Cref{thm:plotkin}]
Let $\cC$ be an arbitrary multiple packing on a spherical cap on $ \cB^n(\sqrt{nP}) $ of angular radius $\alpha$, denoted by $ \cK_\alpha $, with list-decoding radius $ \sqrt{nN} $ and list-size at most $ L-1 $.
Since rotation does not affect list-decodability, we assume that the cap is defined as 
\begin{align}
\cK_\alpha &\coloneqq \curbrkt{\vx\in\cB^n(\sqrt{nP}):\vx(1)\ge\sqrt{nP}\cos\alpha}. \label{eqn:cap-assump} 
\end{align}

Since $ \cK_\alpha\subset\cB^n(\vv,\sqrt{nP}\sin\alpha) $ for some $ \vv\in\bR^n $, 
by \Cref{lem:subcode-extraction}, we can extract a subcode $ \cC'\subseteq\cC $ of size at least $ M\coloneqq |\cC'|\ge f(|\cC|) $ (for some increasing function $ f(\cdot) $) such that for any $ \vx_i\ne\vx_j\in\cC' $, $ \inprod{\vx_i}{\vx_j} $ almost does not depend on the choice of $ i $ and $j$. 
Specifically, there is $ \eps \xrightarrow{|\cC|\to\infty}0 $ such that $ \normtwo{\vx_i - \vx_j} \in [\sqrt{n(d - \eps)}, \sqrt{n(d + \eps)}] $ for all $ i\ne j\in[M] $ where $ 0<d<4P(\sin\alpha)^2 $ is a constant. 
Since $ \eps $ can be taken to be $ o(1) $, in the proceeding calculations, we will not carry the $o(1)$ factor for simplicity. 

Recall that if $ \cC' $ is on $ \cB^n(\sqrt{nP}) $, then for any list $ \cL\in\binom{[M]}{L} $, we have the following representations of the average squared radius
\begin{align}
\ol\rad^2\paren{\curbrkt{\vx_i}_{i\in\cL}} 
&= \frac{L-1}{L}nP - \frac{1}{L^2}\sum_{i\ne j\in\cL}\inprod{\vx_i}{\vx_j} \label{eqn:avgrad-1} \\
&= \frac{1}{2L^2} \sum_{i\ne j\in\cL} \normtwo{\vx_i - \vx_j}^2, \label{eqn:avgrad-2}
\end{align}
where $ \vx_\cL \coloneqq \frac{1}{L}\sum\limits_{i\in\cL}\vx_i $. 
The first one (\Cref{eqn:avgrad-1}) is copied from \Cref{eqn:avg-rad-spherical-formula-pw-corr}
and the second one (\Cref{eqn:avgrad-2}) is copied from \Cref{eqn:avg-rad-pw-dist}. 

Let $ \cL\in\binom{[M]}{L} $. 
Note that for any $i\in\cL$, 
\begin{align}
\normtwo{\vx_i - \vx_\cL}^2 &= \inprod{\vx_i - \vx_\cL}{\vx_i - \vx_\cL} \notag \\
&= nP - 2\inprod{\vx_i}{\vx_\cL} + \inprod{\vx_\cL}{\vx_\cL} \notag \\
&= nP - \frac{2}{L}\sum_{j\in\cL}\inprod{\vx_i}{\vx_j} + \inprod{\vx_\cL}{\vx_\cL} \notag \\
&= nP - \frac{2}{L}\paren{nP + \sum_{i\ne j\in\cL}\inprod{\vx_i}{\vx_j}} + \inprod{\vx_\cL}{\vx_\cL}. \notag
\end{align}
By the polarization identity, for any $ i\ne j\in\cL $, 
\begin{align}
\inprod{\vx_i}{\vx_j} 
&= \frac{1}{2}\paren{\normtwo{\vx_i}^2 + \normtwo{\vx_j}^2 - \normtwo{\vx_i - \vx_j}^2}
\asymp \frac{1}{2}\paren{2nP - nd}. \notag 
\end{align}
Therefore, $ \normtwo{\vx_i - \vx_\cL}^2 $ (almost) does not depend on $i$. 

Recall the following relations shown in \Cref{sec:comments-radius}: 
\begin{equation}
\begin{array}{ccccccc}
\displaystyle\exptover{\bfi\sim\cL}{\normtwo{\vx_\bfi - \vx_\cL}^2} &=& \displaystyle\min_{\vy\in\bR^n} \exptover{\bfi\sim\cL}{\normtwo{\vx_\bfi - \vy}^2} & \le & \displaystyle\min_{\vy\in\bR^n}\max_{i\in\cL}\normtwo{\vx_i - \vy}^2 & \le & \displaystyle\max_{i\in\cL}\normtwo{\vx_i - \vx_\cL}^2 \\
\verteq & && & \verteq& & \verteq \\
\ol\rad^2\paren{\curbrkt{\vx_i}_{i\in\cL}} && && \rad^2\paren{\curbrkt{\vx_i}_{i\in\cL}} && \rad^2_{\max}\paren{\curbrkt{\vx_i}_{i\in\cL}} 
\end{array} . 
\notag
\end{equation}
Since the value of $ \normtwo{\vx_i - \vx_\cL} $ is approximately the same for every $ i\in\cL $, we have $ \ol\rad^2(\curbrkt{\vx_i}_{i\in\cL})\asymp\rad^2_{\max}(\curbrkt{\vx_i}_{i\in\cL}) $. 
Since $ \rad^2(\curbrkt{\vx_i}_{i\in\cL}) $ is sandwiched between the above two quantities, we further have, for every $ \cL\in\binom{[M]}{L} $, 
\begin{align}
\ol\rad^2\paren{\curbrkt{\vx_i}_{i\in\cL}} \asymp \rad^2\paren{\curbrkt{\vx_i}_{i\in\cL}} \asymp \rad^2_{\max}\paren{\curbrkt{\vx_i}_{i\in\cL}}. 
\label{eqn:all-rad-same}
\end{align}

Moreover, we claim that the value of the above three radii (almost) do not depend on the choice of $ \cL\in\binom{[M]}{L} $. 
By \Cref{eqn:all-rad-same}, it suffices to prove this for one of the three radii. 
Indeed, for any $ \cL\in\binom{[M]}{L} $, consider the average radius:
\begin{align}
\ol\rad^2(\curbrkt{\vx_i}_{i\in\cL})
&= \frac{1}{2L^2}\sum_{i\ne j\in\cL}\normtwo{\vx_i - \vx_j} ^2 
\asymp \frac{1}{2L^2}L(L-1)nd
= \frac{L-1}{2L}nd, \label{eqn:avgrad-same}
\end{align}
independent of $ \cL $. 
The first inequality follows from the second representation of $ \ol\rad^2(\cdot) $ (\Cref{eqn:avgrad-2}). 
By the definitions of $ \rad^2_L(\cdot) $ (\Cref{eqn:rad-code}) and $ \ol\rad^2_L(\cdot) $ (\Cref{eqn:avgrad-code}), we further have $ \ol\rad^2(\cC') \asymp \rad^2(\cC') $. 

To get an upper bound on $ M=|\cC'| $, it suffices to upper bound $ \rad^2(\cC') $. 
To this end, we bound the following quantity 
\begin{align}
\exptover{\cL\sim\binom{[M]}{L}}{\ol\rad^2\paren{\curbrkt{\vx_i}_{i\in\cL}}}. \label{eqn:double-count}
\end{align}

By the first form of $ \ol\rad^2(\cdot)$ (\Cref{eqn:avgrad-1}), \Cref{eqn:double-count} equals
\begin{align}
\Cref{eqn:double-count}
&= \exptover{\cL\sim\binom{[M]}{L}}{ \frac{L-1}{L}nP - \frac{1}{L^2}\sum_{i\ne j\in\cL}\inprod{\vx_i}{\vx_j} } \notag \\
&= \frac{L-1}{L}nP - \frac{1}{L^2}\frac{1}{\binom{M}{L}} \sum_{\cL\in\binom{[M]}{L}} \sum_{i\ne j\in\cL} \inprod{\vx_i}{\vx_j} \notag \\
&= \frac{L-1}{L}nP - \frac{1}{L^2}\frac{1}{\binom{M}{L}} \sum_{i\ne j\in[M]}\sum_{\cL\in\binom{[M]}{L}}  \inprod{\vx_i}{\vx_j}\indicator{\cL\ni i,\cL\ni j} \notag \\
&= \frac{L-1}{L}nP - \frac{1}{L^2}\frac{1}{\binom{M}{L}} \sum_{i\ne j\in[M]} \inprod{\vx_i}{\vx_j}\sum_{\cL\in\binom{[M]}{L}} \indicator{\cL\ni i,\cL\ni j} \notag \\
&= \frac{L-1}{L}nP - \frac{1}{L^2}\frac{1}{\binom{M}{L}} \sum_{i\ne j\in[M]} \inprod{\vx_i}{\vx_j} \binom{M-2}{L-2} \notag \\
&= \frac{L-1}{L}nP - \frac{1}{L^2}\frac{1}{\binom{M}{L}}\paren{\sum_{(i,j)\in[M]^2}\inprod{\vx_i}{\vx_j} - \sum_{i\in[M]}\normtwo{\vx_i}^2}\binom{M-2}{L-2} \notag \\
&= \frac{L-1}{L}nP - \frac{1}{L^2}\frac{1}{\binom{M}{L}}\paren{M^2\normtwo{\ol\vx}^2 - M\cdot nP}\binom{M-2}{L-2} . \label{eqn:def-xbar}
\end{align}
In \Cref{eqn:def-xbar}, we let $ \ol\vx \coloneqq \frac{1}{M}\sum\limits_{i\in[M]}\vx_i $. 
Note that the norm of $ \ol\vx $ can be lower bounded as follows.
\begin{align}
\normtwo{\ol\vx}^2 &= 
\normtwo{\frac{1}{M}\sum_{i\in[M]}\vx_i}^2 \notag \\
&= \sum_{j = 1}^n\paren{\frac{1}{M}\sum_{i\in[M]}\vx_i(j)}^2 \notag \\
&\ge \paren{\frac{1}{M}\sum_{i\in[M]}\vx_i(1)}^2 \notag \\
&\ge \paren{\frac{1}{M}\cdot M\cdot\sqrt{nP}\cos\alpha}^2 \label[ineq]{eqn:use-cap-assump} \\
&= nP\cdot(\cos\alpha)^2. \label{eqn:bound-norm-xbar} 
\end{align}
\Cref{eqn:use-cap-assump} is by the assumption $ \cC'\subset\cC\subset\cK_\alpha $ where $ \cK_\alpha $ is defined by \Cref{eqn:cap-assump}. 
Putting \Cref{eqn:bound-norm-xbar} back to \Cref{eqn:def-xbar}, we have
\begin{align}
\Cref{eqn:double-count} &\le 
\frac{L-1}{L}nP - \frac{1}{L^2}\frac{1}{\binom{M}{L}}\paren{M^2 nP(\cos\alpha)^2 - M\cdot nP}\binom{M-2}{L-2} \notag \\
&= \frac{L-1}{L}nP - \frac{L-1}{LM(M-1)}\paren{M^2nP(\cos\alpha)^2 - MnP} \notag \\
&= \frac{L-1}{L}nP - \frac{(L-1)M}{L(M-1)}nP(\cos\alpha)^2 + \frac{L-1}{L(M-1)}nP \notag \\
&= \frac{L-1}{L}nP - \frac{L-1}{L}nP(\cos\alpha)^2 + \frac{L-1}{L(M-1)}nP - \frac{L-1}{L(M-1)}nP(\cos\alpha)^2 \notag \\
&= \frac{L-1}{L}nP(\sin\alpha)^2\paren{1 + \frac{1}{M-1}} . \label{eqn:ub-doublecount} 
\end{align}

Finally, 
\begin{align}
nN \le \rad^2(\cC') = \min_{\cL\in\binom{[M]}{L}} \rad^2\paren{\curbrkt{\vx_i}_{i\in\cL}} 
\asymp \exptover{\cL\sim\binom{[M]}{L}}{\ol\rad^2(\curbrkt{\vx_i}_{i\in\cL})}
\le \frac{L-1}{L}nP(\sin\alpha)^2\paren{1 + \frac{1}{M-1}}. \label[ineq]{eqn:to-rearrange} 
\end{align}
The asymptotic equality follows from the following two points: $(i)$ by \Cref{eqn:all-rad-same}, for any $ \cL\in\binom{[M]}{L} $, $ \rad^2(\curbrkt{\vx_i}_{i\in\cL})\asymp\ol\rad^2(\curbrkt{\vx_i}_{i\in\cL}) $; $ (ii) $ by \Cref{eqn:avgrad-same}, the value of $ \ol\rad^2(\curbrkt{\vx_i}_{i\in\cL}) $ is approximately the same for every $ \cL\in\binom{[M]}{L} $.

Assume $ \frac{N}{P} = \frac{L-1}{L}(\sin\alpha)^2(1+\rho) $ for some constant $ \rho>0 $. 
Rearranging terms on both sides of \Cref{eqn:to-rearrange}, we get 
\begin{align}
1 + \rho &\le 1+\frac{1}{M-1} &&\implies & M &\le \frac{1}{\rho} + 1. \notag  
\end{align}
Therefore we have shown $ |\cC'|\le\rho^{-1} + 1 $. 
Since $ |\cC'|\ge f(|\cC|) $, we finally get $ |\cC| \le f^{-1}(|\cC'|) \le f^{-1}(\rho^{-1}+1) = F(\rho^{-1} + 1) $, as claimed in the theorem. 
\end{proof}

\begin{remark}
In \Cref{thm:plotkin}, one can take $ F(\cdot) = f^{-1}(\cdot) $ for the $ f(\cdot) $ given by \Cref{eqn:ramsey-f}. 
Recall that $f$ is increasing very slowly. 
Since the inverse of an increasing function is also increasing, $ F $ is increasing. 
Note that $ F $ is growing as rapidly as the the Ramsey number. 
\end{remark}

\subsection{Elias--Bassalygo bound}
\label{sec:eb}
In this section, we combine the following covering lemma with \Cref{thm:plotkin} to get an upper bound on the size of an arbitrary $ (P,N,L-1) $-list-decodable code. 
\begin{lemma}[Folklore]
\label{lem:covering}
Let $ 0<\alpha<\pi $. 
There exists a collection of $ e^{n\paren{\ln\frac{1}{\sin\alpha} + o(1)}} $ spherical caps of angular radius $ \alpha $ such that they cover the whole sphere. 
\end{lemma}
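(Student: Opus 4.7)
The plan is to combine a probabilistic covering argument with an $\eta$-net discretization. The key input is the standard asymptotic for the normalized surface area of a spherical cap on $\cS^{n-1}$: if $\mu$ denotes the uniform probability measure and $\cK_\alpha(\vv)\coloneqq\curbrkt{\vx\in\cS^{n-1}:\inprod{\vx}{\vv}\ge\cos\alpha}$ the cap of angular radius $\alpha$ centered at $\vv$, then
\begin{align}
\mu(\cK_\alpha) = \frac{\int_0^\alpha (\sin\theta)^{n-2}\diff\theta}{\int_0^\pi (\sin\theta)^{n-2}\diff\theta} = e^{-n\ln(1/\sin\alpha) + o(n)}, \notag
\end{align}
which follows from Laplace's method applied to both integrals (the numerator's integrand peaks at the upper endpoint when $\alpha\le\pi/2$; the denominator is of order $n^{-1/2}$). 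The regime $\alpha>\pi/2$ is trivial, since $O(1)$ hemispheres already cover $\cS^{n-1}$ and the claimed bound is loose in that range.

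Assuming $\alpha\le\pi/2$, I would fix a small parameter $\eta>0$ (to be chosen later as a slowly vanishing function of $n$) and construct a maximal $\eta$-separated set $\cN\subset\cS^{n-1}$. Maximality guarantees that every point of the sphere is within angular distance $\eta$ of some element of $\cN$; the disjointness of the radius-$(\eta/2)$ caps around the points of $\cN$ yields $|\cN|\le\mu(\cK_{\eta/2})^{-1} = e^{n\ln(1/\sin(\eta/2)) + o(n)}$. Next, I would sample $N$ centers $\vv_1,\cdots,\vv_N$ i.i.d.\ uniformly from $\cS^{n-1}$ and consider the caps $\cK_{\alpha-\eta}(\vv_i)$ of slightly reduced radius. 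For any fixed $\vy\in\cN$, the probability that $\vy$ is missed by every sampled cap is $(1-\mu(\cK_{\alpha-\eta}))^N$, so a union bound over $\cN$ gives
\begin{align}
\prob{\cN\not\subseteq\bigcup_{i\in[N]}\cK_{\alpha-\eta}(\vv_i)} &\le |\cN|\paren{1-\mu(\cK_{\alpha-\eta})}^N, \notag
\end{align}
which is strictly less than $1$ once $N\ge \ln|\cN|/\mu(\cK_{\alpha-\eta})$. Substituting the exponential estimates above, this takes the form $N = e^{n\ln(1/\sin(\alpha-\eta)) + o(n)}$. The triangle inequality for angular distance then implies that the enlarged caps $\cK_\alpha(\vv_i)$ cover the whole sphere: any $\vx\in\cS^{n-1}$ lies within angular distance $\eta$ of some $\vy\in\cN$, which in turn lies in some $\cK_{\alpha-\eta}(\vv_i)$, so $\vx\in\cK_\alpha(\vv_i)$ by the triangle inequality.

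The main obstacle is the bookkeeping needed to absorb the net overhead into the $o(1)$ correction in the exponent. Choosing $\eta=\eta_n\to 0$ slowly (for instance $\eta_n=1/\ln n$) ensures $\ln(1/\sin(\alpha-\eta_n)) = \ln(1/\sin\alpha) + o(1)$, while the subexponential factors arising from Laplace's method, together with the factor $|\cN|$, all contribute only $o(n)$ to the exponent of $N$. Executing this choice yields the target count $N = e^{n(\ln(1/\sin\alpha) + o(1))}$; the explicit tracking of Laplace-method prefactors at the chosen decay rate $\eta_n$ is the one technical step that must be carried out with some care in the full proof.
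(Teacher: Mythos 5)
Your argument is correct and complete: the cap-measure asymptotic, the maximal $\eta$-separated net with the packing bound on $|\cN|$, the union bound over the net for i.i.d.\ random centers of the shrunken caps $\cK_{\alpha-\eta}$, and the triangle-inequality enlargement back to radius $\alpha$ all fit together, and the choice $\eta_n\to0$ slowly indeed absorbs both the net overhead and the Laplace-method prefactors into the $o(1)$ in the exponent. The paper omits the proof of this folklore lemma and only remarks that a random covering succeeds with probability doubly exponentially close to one, which is precisely the argument you have written out.
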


\begin{remark}
A random covering enjoys the property in \Cref{lem:covering} with probability doubly exponentially close to one. 
This lemma is well-known and we omit the proof. 
In fact, the converse of \Cref{lem:covering} is also true. 
That is, any covering of the sphere by spherical caps of angular radius $\alpha$ must have size at least $ e^{n\paren{\ln\frac{1}{\sin\alpha} - o(1)}} $. 
We will not use the converse. 
\end{remark}

We combine \Cref{thm:plotkin} and \Cref{lem:covering} to prove the following upper bound on the $ (P,N,L-1) $-list-decoding capacity. 
\begin{theorem}
\label{thm:eb}
Let $ L\in\bZ_{\ge2} $ and $ N,P>0 $ such that $ \frac{N}{P}\le\frac{L-1}{L} $. 
The $ (P,N,L-1) $-list-decoding capacity $ C_{L-1}(P,N) $ is at most $ \frac{1}{2}\ln\frac{(L-1)P}{LN} $. 
\end{theorem}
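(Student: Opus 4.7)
The plan is to derive this as an Elias--Bassalygo-type reduction: combine the covering lemma (\Cref{lem:covering}) with the Plotkin-type bound (\Cref{thm:plotkin}) via a pigeonhole argument. The strategy is to cover the ambient sphere by small spherical caps, observe by pigeonhole that one cap contains a nontrivial fraction of the codewords, and then apply the Plotkin-type bound to force that fraction to be only a constant (as long as the cap is chosen small enough relative to the noise level).

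First, I would reduce to the case of spherical codes by invoking \Cref{thm:reduction-ball-to-spherical}, so that WLOG $\cC\subset\cS^{n-1}(\sqrt{nP})$ (this only degrades the noise parameter by a $(1+o(1))$ factor and hence does not affect the asymptotic capacity). Next, by \Cref{lem:covering}, the sphere $\cS^{n-1}(\sqrt{nP})$ can be covered by $e^{n(\ln\frac{1}{\sin\alpha}+o(1))}$ spherical caps of angular radius $\alpha$, for any fixed $\alpha\in(0,\pi)$. By pigeonhole, at least one such cap $\cK_\alpha$ contains a subcode $\cC'\subseteq\cC$ of size
\begin{align}
|\cC'| \;\ge\; |\cC|\cdot e^{-n(\ln\frac{1}{\sin\alpha}+o(1))}. \notag
\end{align}
Since $\cC'$ inherits the $(P,N,L-1)$-list-decodability of $\cC$, it is a list-decodable code on $\cK_\alpha$.

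Now I would pick $\alpha$ so that the Plotkin-type bound is triggered. Fix any $\rho>0$ and choose $\alpha$ satisfying $\frac{N}{P}=\frac{L-1}{L}(\sin\alpha)^2(1+\rho)$; this is possible precisely because $\frac{N}{P}\le\frac{L-1}{L}$. Then \Cref{thm:plotkin} gives $|\cC'|\le F(\rho^{-1}+1)$, a constant depending only on $\rho$ and $L$. Combining,
\begin{align}
|\cC| \;\le\; F(\rho^{-1}+1)\cdot e^{n(\ln\frac{1}{\sin\alpha}+o(1))}, \notag
\end{align}
so $R(\cC)\le \ln\frac{1}{\sin\alpha}+o(1) = \frac{1}{2}\ln\frac{(L-1)P(1+\rho)}{LN} + o(1)$. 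Taking $n\to\infty$ first (which kills the $o(1)$ and the constant $F(\rho^{-1}+1)$ in the rate) and then $\rho\to 0^+$ yields $C_{L-1}(P,N)\le \frac{1}{2}\ln\frac{(L-1)P}{LN}$.

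The main subtlety to handle carefully is the order of limits: the Plotkin constant $F(\rho^{-1}+1)$ blows up as $\rho\to 0$, so one cannot simply set $\rho=0$. The remedy is to prove the bound with slack $\rho$ first, observing that for each \emph{fixed} $\rho>0$ the constant contributes $0$ to the rate as $n\to\infty$, and only afterwards let $\rho\to0^+$. A secondary bookkeeping issue is that the ball-to-sphere reduction increases the blocklength by one and rescales $N$ to $\frac{nN}{n+1}$; both effects are absorbed in the asymptotic $\limsup_{n\to\infty}$ defining $C_{L-1}(P,N)$.
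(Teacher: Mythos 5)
Your proposal is correct and follows essentially the same route as the paper: cover the sphere by $e^{n(\ln\frac{1}{\sin\alpha}+o(1))}$ caps, apply the Plotkin-type bound (\Cref{thm:plotkin}) with $\frac{N}{P}=\frac{L-1}{L}(\sin\alpha)^2(1+\rho)$ to bound the codewords per cap by the constant $F(\rho^{-1}+1)$, and send $n\to\infty$ before $\rho\to0^+$. The paper phrases the counting as a sum over caps rather than a pigeonhole on one cap, and relegates the ball-to-sphere reduction to a remark, but these are cosmetic differences.
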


\begin{remark}
\label{rk:ub-asymp}
The above bound approaches $ \frac{1}{2}\ln\frac{P}{N} $ as $ L\to\infty $. 
The latter quantity is known to be the list-decoding capacity in the large $L$ limit (see \Cref{sec:listdec-cap-large}).
In fact, it can be shown that to achieve $(P,N,L-1)$-list-decoding rate at least $ \frac{1}{2}\ln\frac{P}{N} - \eps $, the list-size has to be at least $ \Omega(\eps) $. 
To see this, note that
\begin{align}
\frac{1}{2}\ln\frac{(L-1)P}{LN} &= \frac{1}{2}\ln\frac{P}{N} - \frac{1}{2}\ln\paren{1+\frac{1}{L-1}} . \notag 
\end{align}
It then suffices to lower bound the error term:
\begin{align}
\frac{1}{2}\ln\paren{1+\frac{1}{L-1}} &\ge \frac{1}{4(L-1)} , \notag 
\end{align}
using $ \ln(1+x)\ge x/2 $ for any $ 0\le x\le1/2 $. 
This implies $ L\ge\frac{1}{4\eps} + 1 = \Omega(\frac{1}{\eps}) $. 
A logarithmic gap is noted when contrasting the above necessary condition with the sufficient condition $ L = \cO\paren{\frac{1}{\eps}\ln\frac{1}{\eps}} $ derived in \Cref{rk:lb-asymp} from \Cref{thm:lb-spherical-improved}. 

For $L=2$, the above bound becomes $ \frac{1}{2}\ln\frac{P}{2N} $ which is worse than the linear programing-type bound by Kabatiansky and Levenshtein \cite{kabatiansky-1978}. 
The above bound is tight (specifically, equals $ \infty $ and $0$, respectively) at $ N/P=0 $ and $ N/P=\frac{L-1}{L} $. 
\end{remark}

\begin{proof}
Let $ \cC\subset\cB^n(\sqrt{nP}) $ be an arbitrary $ (P,N,L-1) $-list-decodable code. 
Let $ 0<\alpha<\pi $ be such that $ \frac{N}{P} = \frac{L-1}{L}(\sin\alpha)^2(1+\rho) $ for some small constant $ \rho>0 $. 
We cover $ \cB^n(\sqrt{nP}) $ using spherical caps of angular radius $ \alpha $. 
By \Cref{thm:plotkin}, on each spherical cap, the number of codewords is at most a constant $ C_\rho = F(\rho^{-1} + 1) $, independent of $n$. 
By \Cref{lem:covering}, the number of spherical caps is $ e^{n\paren{\ln\frac{1}{\sin\alpha} + o(1)}} $. 
Therefore, the total number of codewords in $ \cC $ is at most $ C_\rho\cdot e^{n\paren{\ln\frac{1}{\sin\alpha} + o(1)}} $, meaning that the rate of $ \cC $ is at most 
\begin{align}
\ln\frac{1}{\sin\alpha} &= \frac{1}{2}\ln\frac{1}{(\sin\alpha)^2}
= \frac{1}{2}\ln\frac{(L-1)P(1+\rho)}{LN}. \notag 
\end{align}
The theorem is proved once $ \rho $ is sent to zero. 
\end{proof}

\begin{remark}
Technically, \Cref{thm:eb} is proved only for \emph{spherical} codes. 
However, by \Cref{thm:reduction-ball-to-spherical}, this is without loss of generality since any capacity-achieving code can be turned into a spherical code. 
\end{remark}

The idea in \Cref{thm:plotkin} can be adapted to the unbounded setting. 
Let $ \cC\subset\bR^n $ be an arbitrary $ (N,L-1) $-multiple packing of rate $ R(\cC) $. 
The trick is to restrict $ \cC $ to a large ball. 
Specifically, let $ \cC_P \coloneqq \cC\cap\cB^n(\sqrt{nP}) $ for a sufficiently large $ P $ such that 
\begin{align}
R(\cC_P) &\ge R(\cC) - \frac{1}{2}\ln(2\pi eP) - \delta, \label[ineq]{eqn:density-unbdd-to-bdd} 
\end{align}
for a small $ \delta>0 $. 
We divide $ \cB^n(\sqrt{nP}) $ into a sequence of shells such that the norm of vectors in each shell is in the range $ [\sqrt{n(P' - \eps)}, \sqrt{n(P' + \eps)}] $ for some $ 0\le P'\le P $ and some small $ \eps>0 $. 
By Markov's inequality (\Cref{lem:markov}), there must exist some $ P'\in[0,P] $ such that the number of codewords in the shell of radius $ \sqrt{nP'} $ is at least $ \frac{\card{\cC_P}}{P/(2\eps)} $ codewords. 
Let $ \cC_{P'}\subset\cC_P $ denote the subcode in this shell. 
The rate of $ \cC_{P'} $ is asymptotically the same as that of $ \cC_P $. 
If $ \eps $ is sufficiently small, it becomes negligible in the proof of \Cref{thm:plotkin}. 
Therefore, \Cref{thm:plotkin,thm:eb} continue to hold for $ \cC_{P'} $ and we have
$R(\cC_{P'}) \le \frac{1}{2}\ln\frac{LN}{(L-1)P'}$. 
We do not have control on $ P' $ and should take the extremal value $ P' = P $. 
By \Cref{eqn:density-unbdd-to-bdd}, we conclude
\begin{align}
R(\cC) & \le R(\cC_P) + \frac{1}{2}\ln(2\pi eP) + \delta
\asymp R(\cC_{P'}) + \frac{1}{2}\ln(2\pi eP) + \delta
\le \frac{1}{2}\ln\frac{(L-1)P}{LN} - \frac{1}{2}\ln(2\pi eP) + \delta
= \frac{1}{2}\ln\frac{L-1}{L\cdot2\pi eN} + \delta, \notag
\end{align}
provided that $ P $ is sufficiently large and consequently $ \delta $ is sufficiently small. 

We summarize our finding in the following theorem. 
\begin{theorem}
\label{thm:eb-unbdd}
Let $ N>0 $ and $ L\in\bZ_{\ge2} $. 
The $ (N,L-1) $-list-decoding capacity $ C_{L-1}(N) $ is at most $ \frac{1}{2}\ln\frac{L-1}{L\cdot2\pi eN} $. 
\end{theorem}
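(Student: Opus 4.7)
The plan is to reduce the unbounded case to the bounded Elias--Bassalygo bound (\Cref{thm:eb}). Given an arbitrary $(N, L-1)$-multiple packing $\cC \subset \bR^n$, I would restrict it to a ball of radius $\sqrt{nP}$ for a large parameter $P$, defining $\cC_P \coloneqq \cC \cap \cB^n(\sqrt{nP})$, and invoke the asymptotic relation \Cref{eqn:relation-bounded-unbounded} between bounded rate and NLD to obtain
\[R(\cC) \le R(\cC_P) - \tfrac{1}{2}\ln(2\pi e P) + \delta\]
for any prescribed $\delta > 0$ once $P$ is taken large enough. It then suffices to show $R(\cC_P) \le \tfrac{1}{2}\ln\tfrac{(L-1)P}{LN}$, since the two $P$-dependent terms will combine to yield exactly the target $\tfrac{1}{2}\ln\tfrac{L-1}{L \cdot 2\pi eN}$ after sending $P \to \infty$ and $\delta \to 0$.

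To bound $R(\cC_P)$, I cannot invoke \Cref{thm:eb} directly, since the underlying Plotkin argument requires (approximately) constant codeword norm. I would therefore slice $\cB^n(\sqrt{nP})$ into $O(P/\eps)$ concentric shells of radial width $\sqrt{n\eps}$, with $\eps > 0$ a small constant independent of $n$. By pigeonhole, one of these shells contains at least an $\Omega(\eps/P)$ fraction of $\cC_P$; call this subcode $\cC_{P'}$, with norms concentrated near $\sqrt{nP'}$ for some $P' \in [0, P]$. Since the number of shells is constant in $n$, $R(\cC_{P'}) \asymp R(\cC_P)$ asymptotically. As $\cC_{P'}$ is essentially a spherical code of radius $\sqrt{nP'}$, the Plotkin bound (\Cref{thm:plotkin}) and the covering reduction inside the proof of \Cref{thm:eb} both apply to $\cC_{P'}$, up to an $o(1)$ perturbation in the effective noise-to-signal ratio. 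This gives $R(\cC_{P'}) \le \tfrac{1}{2}\ln\tfrac{(L-1)P'}{LN} \le \tfrac{1}{2}\ln\tfrac{(L-1)P}{LN}$, the worst case being $P' = P$.

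Chaining these estimates yields $R(\cC) \le \tfrac{1}{2}\ln\tfrac{L-1}{L \cdot 2\pi eN} + \delta$, and sending $P \to \infty$ and $\delta \to 0$ produces the claim. The main obstacle is balancing the parameters $\eps, \delta, P, n$ in a compatible order: $\eps$ must be small enough to make the shell approximation negligible (so that \Cref{thm:plotkin}'s near-constant pairwise-distance hypothesis still holds), yet $\eps$ must remain a fixed constant in $n$ so that the pigeonhole loss $\tfrac{1}{n}\ln(P/(2\eps))$ vanishes; meanwhile $P$ must grow to shrink $\delta$. A minor additional subtlety is that \Cref{thm:eb} was originally stated for spherical codes via \Cref{thm:reduction-ball-to-spherical}, and one should verify that the $N \to nN/(n+1)$ shift introduced by that reduction is benign as $n \to \infty$.
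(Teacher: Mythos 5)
Your proposal is correct and follows essentially the same route as the paper: restrict the packing to a large ball, pass to a near-constant-norm shell by pigeonholing over $\cO(P/\eps)$ shells, apply the machinery of \Cref{thm:plotkin,thm:eb} to that shell, take the worst case $P'=P$, and combine with the rate/NLD conversion before letting $P\to\infty$ and $\delta\to0$. The parameter-ordering and spherical-reduction caveats you flag are precisely the points the paper's own brief argument also relies on, so there is no gap.
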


\begin{remark}
The above bound approaches $ \frac{1}{2}\ln\frac{1}{2\pi eN} $ as $ L\to\infty $. 
The latter quantity is known to be the list-decoding capacity in the large $L$ limit (see \Cref{sec:listdec-cap-large}). 
\end{remark}

\section{Open questions}
\label{sec:open}

We end the paper with several intriguing open questions.
\begin{enumerate}
	\item \label{itm:lp-packing}
	The problem of packing spheres in $ \ell_p $ space was also addressed in the literature \cite{rankin-sphericalcap-1955,spence-1970-lp-packing,ball-1987-lp-packing,samorodnitsky-l1}. 
	Recently, there was an exponential improvement on the optimal packing density in $ \ell_p $ space \cite{sah-2020-lp-sphere-packing} relying on the Kabatiansky--Levenshtein bound \cite{kabatiansky-1978}. 
	It is worth exploring the $ \ell_p $ version of the multiple packing problem. 
	One obstacle here is that the $ \ell_p $ average radius does not admit a closed form expression unlike the $p=2$ case.
	
	
	\item \label{itm:khinchin-beta}
	We examined the performance of multiple packings obtained from the Gaussian distribution (\Cref{sec:lb-gaussian}), the uniform distribution on a sphere (\Cref{sec:lb-spherical}) and the uniform distribution in a ball (\Cref{sec:ball-codes}), respectively. 
	It turns out that these distributions are special cases of the $n$-dimensional beta distribution $ \Beta_n(\beta) $ when $ \beta = \infty,-1,0 $, respectively. 
	If one can prove a Khinchin's inequality for beta distribution with sharp constants, then it allows us to unify our proofs for three distributions and potentially strengthen our bounds for spherical codes and ball codes. 
	Also, Khinchin's inequality for beta distribution is of independent interest for probabilists.

\end{enumerate}

			

\section{Acknowledgement}
YZ thanks Tomasz Tkocz for discussions on Khinchin's inequality with sharp constants for sums and quadratic forms. 
He also would like to thank Nir Ailon and Ely Porat for several helpful conversations throughout this project, and Alexander Barg for insightful comments on the manuscript. 

YZ has received funding from the European Union's Horizon 2020 research and innovation programme under grant agreement No 682203-ERC-[Inf-Speed-Tradeoff]. 
The work of SV was supported by a seed grant from IIT Hyderabad and the start-up research grant from the Science and Engineering Research Board, India (SRG/2020/000910). 


\bibliographystyle{alpha}
\bibliography{ref} 

\end{document}